\documentclass[leqno, 12pt]{article}

\usepackage{amsthm}
\usepackage{amssymb}
\usepackage{amsmath}
\usepackage[english]{babel}
\usepackage[utf8]{inputenc}
\usepackage[T1]{fontenc}
\usepackage[normalem]{ulem}
\usepackage{graphicx}
\usepackage{afterpage}
\usepackage{calc}
\usepackage{caption}
\usepackage{subcaption}
\usepackage{times}
\usepackage{fullpage}
\usepackage{float}
\usepackage{marvosym}
\usepackage{hyperref}
\usepackage{tikz}
\usetikzlibrary{shapes,fit,positioning,calc,matrix,arrows}
\tikzset{
  optree/.style={scale=.5,thick,grow'=up,level distance=10mm,inner sep=1pt},
  comp/.style={draw=none,circle,fill,line width=0,inner sep=0pt},
  dot/.style={draw,circle,fill,inner sep=0pt,minimum width=3pt},
  circ/.style={draw,circle,inner sep=1pt,minimum width=4mm},
  emptycirc/.style={draw,circle,inner sep=1pt,minimum width=2mm},  
  root/.style={level distance=10mm,inner sep=1pt},
  leaf/.style={draw=none,circle,fill,line width=0,inner sep=0pt},
  nodot/.style={draw,circle,inner sep=1pt},
}
\usepackage[nottoc, notlof, notlot]{tocbibind}

\newtheorem*{Theorem*}{Theorem}
\newtheorem{Theorem}{Theorem}[section]
\newtheorem{Corollary}[Theorem]{Corollary}
\newtheorem{Proposition}{Proposition}[section]
\newtheorem{Lemma}{Lemma}[section]

\theoremstyle{definition}
\newtheorem{Definition}[Proposition]{Definition}
\newtheorem{Definition/Proposition}[Proposition]{Definition/Proposition}

\theoremstyle{remark}
\newtheorem{Remark}{Remark}[section]
\newtheorem{Example}[Remark]{Example}

\DeclareFontFamily{U}{wncy}{}
\DeclareFontShape{U}{wncy}{m}{n}{<->wncyr10}{}
\DeclareSymbolFont{mcy}{U}{wncy}{m}{n}
\DeclareMathSymbol{\Sh}{\mathord}{mcy}{"58}

\newcommand{\G}{\mathcal{G}}
\newcommand{\F}{\mathcal{F}}

\newcommand{\T}{\mathcal{T}}
\newcommand{\Q}{\mathcal{Q}}
\newcommand{\R}{\mathcal{R}}
\newcommand{\X}{\mathcal{X}}
\newcommand{\Y}{\mathcal{Y}}
\renewcommand{\P}{\mathcal{P}}

\newcommand{\N}{\mathbb{N}}
\newcommand{\Z}{\mathcal{Z}}
\newcommand{\gr}{\mathrm{gr}}

\newcommand{\greg}{\mathrm{Greg}}

\newcommand{\Zin}{\mathrm{Zinb}}
\newcommand{\Ass}{\mathrm{Ass}}
\newcommand{\Pois}{\mathrm{Poiss}}
\newcommand{\PL}{\mathrm{PreLie}}
\newcommand{\Lie}{\mathrm{Lie}}
\newcommand{\CycLie}{\mathrm{CycLie}}
\newcommand{\Com}{\mathrm{Com}}

\newcommand{\Mult}{\textnormal{Mult}}
\newcommand{\id}{\textnormal{id}}
\newcommand{\rev}{\textnormal{rev}}
\newcommand{\tx}{\tilde{x}}

\newcommand{\tg}{\tilde{g}}
\newcommand{\oeis}[1]{\href{https://oeis.org/#1}{{#1}}}

\makeatletter
\newcommand{\oset}[2]{{\mathpalette\o@set{{#1}{#2}}}}
\newcommand{\o@set}[2]{\o@@set{#1}#2}
\newcommand{\o@@set}[3]{%
  \vbox{\offinterlineskip
    \ialign{\hfil##\hfil\cr
      $\m@th\o@set@demote{#1}#2$\cr
      \noalign{\vskip0.2pt}
      $\m@th#1#3$\cr
    }%
  }%
}
\newcommand{\o@set@demote}[1]{%
  \ifx#1\displaystyle\scriptstyle\else
  \ifx#1\textstyle\scriptstyle\else
  \scriptscriptstyle\fi\fi
}
\makeatother

\newlength{\depthofsumsign}
\setlength{\depthofsumsign}{\depthof{$\sum$}}

\newcommand{\resum}[1][1.4]{
    \mathop{%
        \raisebox
            {-#1\depthofsumsign+1\depthofsumsign}
            {\scalebox
                {#1}
                {$\displaystyle\sum$}%
            }
    }
}

\newlength\myheight
\newcommand*\ccircled[1]{\settowidth{\myheight}{#1}%
    \raisebox{-.1\myheight}{\tikz[baseline=(char.base)]{%
        \node[shape=circle,draw,minimum size=\myheight*\myheight*.3,inner sep=0.5pt](char){$#1$};}}}

\tikzset{cross/.style={cross out, draw=black, minimum size=2*(#1-\pgflinewidth), inner sep=0pt, outer sep=0pt},
cross/.default={1pt}}



\begin{document}

\title{Combinatorics of pre-Lie products sharing a Lie bracket}
\author{\textsc{Paul Laubie}}
\date{}
\maketitle

\begin{abstract}
We study in detail the operad controlling several pre-Lie algebra structures sharing the same Lie bracket. 
Specifically, we show that this operad admits a combinatorial description similar to that of Chapoton and 
Livernet for the pre-Lie operad, and that it has many of the remarkable algebraic properties of the 
pre-Lie operad.  
\end{abstract}

\section*{Introduction}

Historically, the first example of a non-associative algebraic structure that has been studied is Lie
algebras. They were introduced by Sophus Lie in the 1870s as an algebraic structure on the tangent space
of Lie groups, and were later generalized to capture the algebraic structure of vector fields of any smooth 
manifold.
Although it is well known that the commutator of any associative algebra is a Lie bracket, the above example
of Lie algebra
never comes from a structure of associative algebra on the vector fields. However, a flat 
torsion-free connection on a manifold induces a pre-Lie algebra (or left-symmetric algebra) structure on the
vector fields such that the commutator is the usual Lie bracket on the vector fields \cite{LSA}.
This example may hint that pre-Lie algebras appear more naturally than associative algebras 
when studying Lie algebras. Since flat torsion-free connections
give rise to interesting structures on vector fields, it is not unreasonable to consider the
situation when more than one connection is defined.
This was studied in \cite{D2pl} with the notion of 
Joyce structure. In the case of a manifold with two flat torsion-free connections, the algebraic 
structure on the vector 
fields is richer than a pre-Lie algebra. Indeed, each connection gives a pre-Lie product and
the commutator of those pre-Lie products is the usual Lie bracket on the vector fields.
Hence, we get an algebra with two pre-Lie products sharing the same Lie bracket.
\paragraph{}
If one uses the language of operads to talk about algebraic structures, the operad corresponding to 
algebras with two pre-Lie products sharing the same Lie bracket
is $\bigvee_\Lie^2\PL$, this is the fibered coproduct of two copies of $\PL$ over $\Lie$, i.e.\ the colimit of the
cospan $\PL \leftarrow \Lie \rightarrow \PL$ in the category of algebraic operads. The notion of fiber
coproduct is analogous to the notion of amalgamated sum in group theory, we consider two copies of the $\PL$
and identify the $\Lie$ suboperad of those two copies.
The morphism $\Lie\rightarrow\PL$ used to define this coproduct corresponds to the fact that the operator
$[a, b]=ab-ba$ makes every pre-Lie algebra into a Lie algebra.
Computations show that dimensions of the low arity components of $\bigvee_\Lie^2\PL$ coincides with 
the number of rooted Greg trees \oeis{A005264} in the OEIS \cite{Oeis}. 
A rooted Greg tree is a rooted tree with black and white vertices such that white vertices
are distinguished (e.g.\ numbered), black vertices are undistinguished and each black vertex has at 
least two children. This raises the question whether
the underlying species of $\bigvee_\Lie^2\PL$ is the species of rooted Greg trees. 
We answer this in the affirmative, and in fact prove that this identification
arises from a deformation theoretic phenomenon.
It is known that the operads $\Lie$ and $\PL$ share agreeable algebraic properties, indeed they are 
binary quadratic Koszul operads \cite{AlgOp,PreLie}, 
they have the Nielsen-Schreier property, which means that any subalgebra of a free Lie
(resp. pre-Lie) algebra is free, see \cite{FreeLie,FreeLie2,NSPrt}. Moreover, 
$\PL$ is free over $\Lie$ as a left and right module, see \cite{PLrightFree,PBW} 
(and not as a bimodule). It is natural to ask if $\bigvee_\Lie^2\PL$ shares those properties. We will
show that it is indeed the case. \\
Before we summarize the main results of this paper, remark that the operad $\bigvee_\Lie^2\PL$ can 
naturally be generalized to $\bigvee_\Lie^n\PL$ for any $n\in\N$. 
The operad $\bigvee_\Lie^n\PL$ is
the fibered coproduct of $n$ copies of $\PL$ over $\Lie$. This operad encodes algebras with $n$ 
pre-Lie products that share a common Lie bracket.
\paragraph{Organization of the paper}
We begin by recalling some general facts about operads and more 
particularly about the operads $\Lie$ and 
$\PL$ in Section \ref{seq:recall}. We then give an explicit description of the operadic structure on the
rooted trees from \cite{PreLie} in a way that will make it easier to generalize. In the next section, we
generalize this construction to a larger class of trees, the rooted Greg trees, introducing a naive
operadic structure on them, the operad $\greg$. This newly defined operad is not isomorphic to
$\bigvee_\Lie^2\PL$ but $\bigvee_\Lie^2\PL$ will be shown to be a deformation of $\greg$. We show that:
\begin{Theorem*}\ref{Theorem:m1}
  The operad $\greg$ is binary, quadratic and Koszul.
\end{Theorem*}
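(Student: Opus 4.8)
The plan is to give an explicit quadratic presentation of $\greg$ and to prove Koszulness by the rewriting (Gr\"obner basis) method, which will at the same time confirm that the presentation is complete. First I would pin down the generators: $\greg(2)$ is spanned by the three rooted Greg trees on two white vertices — the two white-over-white trees and the single black vertex carrying both whites as children — and these assemble into a non-symmetric pre-Lie product $\vartriangleright$ (grafting one white vertex beneath the other) and a symmetric product $\bullet$ (the minimal black vertex). For the relations I would take the arity-$3$ space spanned by the pre-Lie relation for $\vartriangleright$ together with the relations governing $\bullet$ and the mixed $\vartriangleright$/$\bullet$ interaction. Conceptually the latter are produced by the deformation heuristic used later in the paper: writing the two pre-Lie products as $\vartriangleright$ and $\vartriangleright+\bullet$, the difference of their two pre-Lie relations is a quadratic relation whose leading part survives in the naive graded operad $\greg$. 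Let $\Q$ denote the quadratic operad so presented; there is then a canonical morphism $\Q\to\greg$.

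Next I would establish that $\greg$ is binary and that this morphism is onto. Generation is an induction on the number of white vertices: peel the root off a rooted Greg tree and use the explicit operadic composition recalled earlier, so that a white root lets one detach a subtree via $\vartriangleright$, while a black root, which by definition has at least two children, is assembled from its subtrees via $\bullet$. This shows $\vartriangleright$ and $\bullet$ generate $\greg$, so $\greg$ is binary and $\Q\to\greg$ is surjective. It then remains to show that this surjection is an isomorphism and that $\greg$ is Koszul, and I would obtain both at once from a single Gr\"obner basis.

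For Koszulness I would fix an admissible path-lexicographic order on shuffle-tree monomials and orient the relations so that every monomial rewrites toward a normal form, arranging the order so that the normal forms are exactly the shuffle-tree presentations of rooted Greg trees (consistent with the combinatorics above). Termination is built into the order. By the rewriting method for operads, confluence of the system makes the oriented relations a quadratic Gr\"obner basis and hence implies that $\Q$ is Koszul; simultaneously the normal forms yield $\dim\Q(n)\le \#\{\text{rooted Greg trees}\}=\dim\greg(n)$, the count \oeis{A005264}, and combined with the surjection from the previous paragraph this forces $\Q\cong\greg$. Thus $\greg$ is quadratic and Koszul in one stroke. By Newman's lemma, confluence reduces to the finitely many critical pairs, all of which sit in arity $3$.

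The main obstacle is precisely this confluence check for the mixed critical pairs — the overlaps involving both $\vartriangleright$ and $\bullet$. There the symmetry of $\bullet$ together with the structural constraint that every black vertex have at least two children makes the normal-form bookkeeping delicate, and verifying that every such overlap resolves to a common normal form is the computational heart of the argument. It is exactly this combinatorics of Greg trees that simultaneously pins down the relations (quadraticity) and produces the quadratic Gr\"obner basis (Koszulness), so getting the mixed overlaps to close is what makes the whole proof go through.
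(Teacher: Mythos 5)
Your skeleton matches the paper's: the same quadratic presentation (your $\Q$ is the paper's $\greg'$, with $\vartriangleright,\bullet$ playing the roles of $x,g$ and your mixed relation being the paper's ``Greg relation''), the same induction showing binarity and surjectivity of $\Q\to\greg$, and the same endgame of forcing the surjection to be an isomorphism by a dimension count extracted from a quadratic Gr\"obner basis. Where you genuinely diverge is in \emph{where} the Gr\"obner basis lives. You propose to compute it directly on $\Q$, arrange the normal forms to biject with rooted Greg trees, and verify confluence by resolving the mixed critical pairs by hand. The paper instead passes to the Koszul dual $(\greg')^!$, precisely because its components are tiny ($\dim((\greg')^!(n))=2n-1$, versus the Greg-tree numbers \oeis{A005264} on your side): it exhibits an explicit $(\greg')^!$-algebra of decorated words $W(\chi)$ giving the lower bound $\dim((\greg')^!(4))\geq 7$, observes that each candidate rewriting system has exactly $7$ normal forms in arity $4$, and concludes confluence \emph{without checking a single overlap} (the paper notes this would otherwise be $294$ cases). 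Koszulness of $\greg'$ follows, and the identification $\greg'\cong\greg$ is then obtained not by a normal-form/Greg-tree bijection but by the functional equation $f_{\P}(f_{\P^!}(-t))=-t$, matching the generating series of Greg trees (the inverse of $(2t+1)\exp(-t)-1$). So the ``computational heart'' you anticipate is exactly what the paper engineers around; your route is viable (the paper even remarks it is), but it trades a closed-form count on the dual side for the harder combinatorial task of proving that your chosen normal forms are equinumerous with Greg trees in every arity --- a step your proposal asserts rather than establishes.

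Two concrete corrections. First, for a binary operad with quadratic relations the critical monomials sit in arity $4$ (weight-$3$ shuffle trees: overlaps of two quadratic leading terms sharing one vertex), not arity $3$ as you state; the relations themselves occupy arity $3$, and the paper's confluence certificate is explicitly an arity-$4$ statement. Second, your dimension argument can be closed without any overlap checking at all, by the same trick as the paper used on the dual: the surjection $\Q\twoheadrightarrow\greg$ gives $\dim\Q(n)\geq\dim\greg(n)$, while normal forms of any terminating orientation give $\dim\Q(n)\leq\#\{\text{normal forms}\}$, so once you prove the normal forms are counted by \oeis{A005264} the two bounds pinch and confluence is automatic; conversely, if you insist on checking the mixed overlaps directly, the bijection with Greg trees is no longer needed for Koszulness but is still needed to conclude $\Q\cong\greg$, i.e.\ quadraticity of $\greg$. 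Either way, one of the two hard steps (overlap resolution, or the normal-form count) must actually be carried out, and your proposal currently gestures at both without committing the details of either.
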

In Section \ref{seq:GGO}, we generalize the construction of $\greg$ by making the operadic structure
depend on a coalgebra $C$, defining the operad $\greg^C$. In the next section, we show that we can obtain the 
operad $\bigvee_\Lie^n\PL$ this way, with a careful choice of the coalgebra. We show that:
\begin{Theorem*}\ref{Theorem:m2}
  The operad $\greg^C$ is binary, quadratic and Koszul.
\end{Theorem*}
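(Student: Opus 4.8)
The plan is to produce an explicit binary quadratic presentation of $\greg^C$ and then to prove Koszulness by exhibiting a quadratic Gröbner (PBW) basis, using Theorem~\ref{Theorem:m1} as the guiding base case $C=\K$. First I would read the presentation $\greg^C=\P(E,R)$ off the definition of the operadic structure on $C$-decorated Greg trees. The generating space $E$ is concentrated in arity $2$: it consists of the grafting operation responsible for white-vertex edges together with a symmetric binary product responsible for creating black vertices, the latter carrying a $C$-decoration (so that, as a multiplicity space, $E$ contains a copy of $C$ on the symmetric generator). The relations $R$ live in arity $3$ and record the ways in which two binary corollas can be composed: a pre-Lie relation for the grafting operation; the symmetry together with the ``associativity up to $\Delta_C$'' of the black product, encoding that a black vertex with three children is assembled from two binary merges via the comultiplication $\Delta_C$; and a mixed distributivity relating grafting to the black product. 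That these arity-$3$ relations generate all relations is the binary-quadratic assertion, and it follows from the same tree bookkeeping that establishes it for $\greg$, since the $C$-decoration does not create arity-$\geq 4$ syzygies beyond those forced by the coassociativity of $\Delta_C$.

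For Koszulness I would fix an admissible monomial order on the tree monomials, orient the relations above into a rewriting system (normalizing nested black vertices and straightening grafts), and argue that the $C$-decorated Greg trees in normal form constitute a PBW basis. By the standard principle that a quadratic operad admitting a quadratic Gröbner basis is Koszul, this yields the claim; specializing $C=\K$ recovers exactly the situation of Theorem~\ref{Theorem:m1}, so the confluence analysis here is a $C$-decorated refinement of the one carried out there.

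The main obstacle is the confluence of the arity-$4$ critical pairs. The mixed pairs involving one graft and one black merge, as well as the purely grafting pairs, are insensitive to the decoration and reduce verbatim to the critical pairs already resolved in the proof of Theorem~\ref{Theorem:m1}. The one genuinely new critical pair is the triple black-vertex merge, where the two reduction paths corresponding to the two bracketings of three $C$-decorated children must agree; this is precisely the coassociativity $(\Delta_C\otimes\id)\Delta_C=(\id\otimes\Delta_C)\Delta_C$, so the coalgebra axiom is exactly what guarantees confluence. As an alternative to the direct Gröbner computation, one could instead interpolate $\Delta_C$ to the trivial comultiplication through a one-parameter family of operads on the fixed underlying species of $C$-decorated Greg trees: the family is flat, its special fibre has inert $C$-decorations and is Koszul by the method of Theorem~\ref{Theorem:m1}, and deformation-invariance of Koszulness for flat families then transports Koszulness to $\greg^C$.
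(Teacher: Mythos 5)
Your overall strategy --- exhibit a binary quadratic presentation and then get Koszulness from a quadratic Gr\"obner basis --- is the right one, and it is close in spirit to the paper. But two of your specific steps have genuine problems. First, the relation set you describe is not the relation set of $\greg^C$. There is no standalone ``associativity up to $\Delta_C$'' of the black product, and black corollas of arity three are \emph{not} assembled from two binary black merges via $\Delta$: nested black vertices are honest basis elements of the species of decorated Greg trees. The actual presentation has exactly two families of relations: the pre-Lie relation for $x$, and a single deformed Greg (Leibniz-type) relation in which the mixed $x$--$g^k$ terms and the deformation term $g^k_{(1)}\circ_1 g^k_{(2)}-(g^k_{(1)}\circ_1 g^k_{(2)}).(2\;3)$ occur \emph{inside one relation}. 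Consequently your critical-pair analysis, which localizes the new confluence obstruction in a ``triple black-vertex merge'' resolved by coassociativity alone, is unreliable: the mixed overlaps are not insensitive to the decoration (the Greg relation itself carries $\Delta$-terms), and the self-overlaps of the Greg relation produce $\Delta$ applied twice, where \emph{cocommutativity} (more precisely copermutativity, as the paper notes in a remark) is needed in addition to coassociativity. Any confluence proof that uses only coassociativity must fail somewhere, since the construction itself breaks for non-cocommutative $C$.

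Second, you assert binarity and quadraticity up front ``from the same tree bookkeeping that establishes it for $\greg$,'' but in the paper only binarity is proved by tree induction; quadraticity is precisely the hard point, and it is \emph{not} known before the Gr\"obner-basis step --- it is a consequence of it. The paper's route is: introduce the quadratic operad $\G^C$ with the relations above, pass to the Koszul dual $(\G^C)^!$ (whose components are tiny, of dimension $(n+1)m-n$), bound $\dim((\G^C)^!(4))$ from below by the explicit word algebra $W_C(\chi)$, conclude confluence of three quadratic rewriting systems by counting normal forms in arity $4$ rather than checking all critical pairs, deduce Koszulness, and only then identify $\G^C$ with $\greg^C$ by matching exponential generating series (using $f_\P(f_{\P^!}(-t))=-t$ against the species equation $\G^V=X\cdot E(\G^V)+nE_{\geq 2}(\G^V)$). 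Your primal-side variant can be made to work by the same counting trick (normal forms in each arity at most span $\G^C$, while the surjection $\G^C\twoheadrightarrow\greg^C$ gives the combinatorial lower bound, forcing equality), but as written your plan cites quadraticity before proving it, which is circular. Finally, your fallback argument is unsound: there is no ``deformation-invariance of Koszulness for flat families'' --- Koszulness is neither open nor closed in such families in general --- and the flatness of the interpolation from $\Delta_C$ to the trivial coproduct is itself equivalent to the dimension statement you are trying to prove, so this alternative both invokes a nonexistent theorem and presupposes its own conclusion.
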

In Section \ref{seq:free}, we address the question of the freeness of $\greg^C$, and we show that for $C'$ 
a subcoalgebra of $C$:
\begin{Theorem*}\ref{Theorem:m3}{\&}\ref{Theorem:m3ns}
  The operad $\greg^C$ is free as a left $\greg^{C'}$-module, as a right $\greg^{C'}$-module and 
  has the Nielsen-Schreier property.
\end{Theorem*}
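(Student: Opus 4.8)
The plan is to treat all three assertions through the explicit combinatorial basis of $\greg^C$ by rooted Greg trees whose black vertices are decorated by $C$, together with the fact, established for Theorem~\ref{Theorem:m2}, that this operad is binary, quadratic and Koszul. Since $C'$ is a subcoalgebra of $C$, its coproduct lands in $C'\otimes C'$, so Greg trees decorated by $C'$ are stable under the operadic composition and $\greg^{C'}$ is genuinely a suboperad of $\greg^C$; the left and right module structures are just the restrictions of the operadic composition. To prove freeness I would exhibit generating species $\X$ (for the left module) and $\Y$ (for the right module) together with canonical decompositions realizing isomorphisms $\greg^C\cong\greg^{C'}\circ\X$ and $\greg^C\cong\Y\circ\greg^{C'}$ of left and right $\greg^{C'}$-modules respectively.

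For the right-module statement, I would define a canonical ``lower cut'' of a $C$-decorated Greg tree: fix a vector-space splitting $C=C'\oplus D$ and use it to mark, at each black vertex, whether its decoration is pushed into the $\greg^{C'}$-part below the cut. The subtrees hanging below the cut are $C'$-decorated Greg trees, while the quotient tree above the cut, carrying the $D$-components, forms the generating species $\Y$. The right action of $\greg^{C'}$ grafts further $C'$-trees at the leaves, which is exactly compatible with this decomposition, so the induced map $\Y\circ\greg^{C'}\to\greg^C$ is a morphism of right modules; surjectivity is clear, and injectivity amounts to uniqueness of the cut, which follows from the subcoalgebra condition forcing the $C'$-labelled part to be intrinsically determined. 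The left-module statement is handled by the symmetric ``upper cut'' isolating the $\greg^{C'}$-tree containing the root, with the complementary forest assembled into $\X$. As an independent check, Koszulness makes the symmetric-function characters computable, and the plethystic identities $\mathrm{ch}(\greg^C)=\mathrm{ch}(\Y)\circ\mathrm{ch}(\greg^{C'})$ and $\mathrm{ch}(\greg^C)=\mathrm{ch}(\greg^{C'})\circ\mathrm{ch}(\X)$ confirm the dimension counts.

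For the Nielsen--Schreier property I would follow the strategy used for the pre-Lie operad in the references cited in the introduction, transported to Greg trees. The quadratic Gröbner basis underlying the Koszulness of $\greg^C$ yields a monomial order and a normal-form (PBW-type) basis for the free $\greg^C$-algebra on a space $V$, indexed by Greg trees with leaves labelled by a basis of $V$. Given a subalgebra $B$, I would run a Schreier-type elimination: order the elements of $B$ by their leading Greg-tree monomials and successively extract a generating family whose leading monomials are minimal and pairwise non-nested. The freeness of $B$ then reduces to showing that no nontrivial $\greg^C$-relation can hold among these leading terms, which is where the tree structure of the normal forms is used decisively.

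The main obstacle will be this last step: controlling the behaviour of leading monomials under the operadic (non-associative) product in the presence of the coalgebra decoration, and proving that the elimination terminates in a \emph{free} generating family rather than merely a generating one. Concretely, the delicate point is that grafting can create new black vertices subject to the ``at least two children'' constraint and can split a $C$-decoration through the coproduct, so the leading term of a product need not be the naive grafting of leading terms; establishing a product-compatibility lemma for leading Greg-tree monomials is the crux on which the whole Nielsen--Schreier argument rests.
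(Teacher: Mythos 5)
There is a genuine gap, in two places. For the left/right freeness, your ``cut'' construction treats the $\greg^{C'}$-module action on $\greg^C$ as if it were plain grafting of $C'$-decorated trees, but it is not: partial composition $\circ_i^\Delta$ at a white vertex makes the whole forest of children of that vertex \emph{fall} over the inserted tree via the symmetric brace products, and moreover splits black-vertex decorations through the coproduct $\Delta$. So the map $\Y\circ\greg^{C'}\to\greg^C$ you describe is not a morphism of right modules as defined, and injectivity cannot be reduced to ``uniqueness of the cut''; the image of a decomposable element is a large sum of trees of different shapes. This is not a repairable technicality by inspection: already for $C=C'$ trivial, right freeness of $\PL$ over $\Lie$ is a nontrivial theorem (the paper cites it), not a consequence of a tree decomposition, and the fact that $\greg^C$ is \emph{not} free as a bimodule should warn you that naive basis-level decompositions are too coarse to detect what holds on each side. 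The paper instead verifies the hypotheses of Dotsenko's freeness criterion (\cite[Theorem~4]{Free}): it takes the quadratic presentation of $\greg^C$, splits the generators as $E\sqcup F$ with $F$ the generators $g^k$ for a complement of $C'$, and checks on the explicitly computed Gr\"obner bases (the ``wprdl'' and ``rdlp'' systems, obtained from the Koszul-dual Gr\"obner bases by order reversal) that the leading terms of the extra relations have root, respectively all-leaves-children vertex, labelled in $F$. The combinatorial content you tried to produce by hand is exactly what these leading-term shape conditions encode.

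For the Nielsen--Schreier property the same issue recurs: your Schreier-type elimination on normal forms is, in outline, a re-derivation of the Dotsenko--Umirbaev machinery, and you correctly identify the crux---compatibility of leading monomials with the (deformed, $\Delta$-splitting) composition---but you leave it unproven, and it is precisely the hard part. The paper does not attempt this; it invokes \cite[Theorem~4.1]{NSPrp}, whose hypotheses are two Gr\"obner bases with prescribed leading-term shapes (one for the reverse path-lexicographic order with the smallest leaf of each leading term adjacent to the root, one with left-comb leading terms whose two smallest leaves share a vertex), and checks these on the ``dlp'' and ``wprdl'' Gr\"obner bases already computed for Koszulness. In short: all three statements in this theorem are, in the paper, corollaries of the earlier explicit Gr\"obner basis computations for $(\G^C)^!$ in three different monomial orders, fed into known general criteria; your proposal would need to supply a substitute for those criteria (your ``product-compatibility lemma''), and as written the freeness half even rests on an incorrect description of the module action.
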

In particular:
\begin{Theorem*}\ref{Theorem:m4}{\&}\ref{Theorem:m4ns}
  The operad $\bigvee_\Lie^{n+1}\PL$ is free as a left $\bigvee_\Lie^n\PL$-module, as a right $\bigvee_\Lie^n\PL$-module and 
  has the Nielsen-Schreier property.
\end{Theorem*}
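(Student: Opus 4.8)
The plan is to deduce this statement directly from Theorem~\ref{Theorem:m3} and Theorem~\ref{Theorem:m3ns}, using the identification of $\bigvee_\Lie^n\PL$ with an operad of the form $\greg^C$ established in the previous section. Concretely, I would first record the isomorphisms of operads $\bigvee_\Lie^n\PL \cong \greg^{C_n}$ and $\bigvee_\Lie^{n+1}\PL \cong \greg^{C_{n+1}}$, where $C_n$ and $C_{n+1}$ are the coalgebras encoding the $n$ (resp.\ $n+1$) pre-Lie products. Passing from $n$ to $n+1$ products amounts to adjoining one further pre-Lie generator while keeping the shared Lie bracket fixed, so $C_n$ should appear inside $C_{n+1}$ as the subcoalgebra spanned by the decorations attached to the first $n$ products. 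I would verify that this is genuinely a subcoalgebra, i.e.\ that the span of the first $n$ decorations is closed under the comultiplication and compatible with the counit of $C_{n+1}$.

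The crucial point, and the step I expect to carry the real content, is to check that this coalgebra inclusion $C_n \hookrightarrow C_{n+1}$ is compatible with the operad isomorphisms: the induced morphism $\greg^{C_n} \to \greg^{C_{n+1}}$ must correspond, under the identifications, to the canonical operad map $\bigvee_\Lie^n\PL \to \bigvee_\Lie^{n+1}\PL$. This canonical map is the one supplied by the universal property of the fibered coproduct, since $\bigvee_\Lie^{n+1}\PL$ receives compatible maps from each of its $n+1$ copies of $\PL$ over $\Lie$, and restricting to the first $n$ copies yields a map out of the colimit $\bigvee_\Lie^n\PL$. I would confirm on generators that this map sends the $i$-th pre-Lie product to the $i$-th pre-Lie product, matching the decoration-preserving inclusion $\greg^{C_n}\hookrightarrow\greg^{C_{n+1}}$.

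Once this compatibility is in place, every conclusion follows at once: applying Theorem~\ref{Theorem:m3} and Theorem~\ref{Theorem:m3ns} with $C=C_{n+1}$ and $C'=C_n$ yields that $\bigvee_\Lie^{n+1}\PL$ is free as a left and as a right $\bigvee_\Lie^n\PL$-module and has the Nielsen–Schreier property. The main obstacle is therefore not the invocation of the general theorem but the bookkeeping in the compatibility check: one must ensure that the isomorphisms $\bigvee_\Lie^k\PL\cong\greg^{C_k}$ are natural enough in the set of pre-Lie products to intertwine the subcoalgebra inclusion with the colimit map, rather than merely holding abstractly for each fixed $k$. Provided the construction of those isomorphisms is functorial in this sense, the verification reduces to a direct inspection on generators.
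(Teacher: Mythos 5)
Your proposal is correct and takes essentially the same route as the paper, which derives Theorems \ref{Theorem:m4} and \ref{Theorem:m4ns} immediately from Theorems \ref{Theorem:m3} and \ref{Theorem:m3ns} via the identification $\bigvee_\Lie^{n+1}\PL\cong\greg^{(V,\Delta_{\max})}$, taking $C'$ to be the span of $e_1,\dots,e_{n-1}$, which is closed under $\Delta_{\max}$ because $\Delta_{\max}(e_k)$ only involves $e_i\otimes e_j$ with $i,j\le\nobreak k$. The compatibility checks you flag (the subcoalgebra property and the agreement, on the generators $x,c_1,\dots,c_{n-1}$, of the induced inclusion $\greg^{C'}\hookrightarrow\greg^{C}$ with the canonical map $\bigvee_\Lie^{n}\PL\to\bigvee_\Lie^{n+1}\PL$) are exactly what the paper leaves implicit; note only that the paper's coalgebras $(V,\Delta)$ carry no counit, so that part of your verification is vacuous.
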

In the next section, we compute explicit generators of $\bigvee_\Lie^{n+1}\PL$ as a left $\bigvee_\Lie^n\PL$-module, generalizing
a conjecture stated in \cite{FPL} proven in \cite{FPRed}. Let $\F$ be the free operad functor, 
$\bar{\F}$ the reduced free operad functor and $\bar{\F}^{(n)}$ the $n$-th iteration of $\bar{\F}$, we show that:
\begin{Theorem*}\ref{Theorem:m5}
  The operad $\bigvee_\Lie^{n+1}\PL$ is isomorphic to $\bigvee_\Lie^n\PL\circ\F(\bar{\F}^{(n)}(\CycLie))$ as a left $\bigvee_\Lie^n\PL$-module.
\end{Theorem*}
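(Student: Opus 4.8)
The plan is to build on the freeness already established in Theorems~\ref{Theorem:m4} and~\ref{Theorem:m4ns}. Since $\bigvee_\Lie^{n+1}\PL$ is free as a left $\bigvee_\Lie^n\PL$-module, there is a species of generators $M_n$, unique up to isomorphism, with $\bigvee_\Lie^{n+1}\PL\cong\bigvee_\Lie^n\PL\circ M_n$ as left modules. The existence of $M_n$ is therefore free; the entire content of the theorem is the explicit identification $M_n\cong\F(\bar{\F}^{(n)}(\CycLie))$. So the real task is to pin down the full species structure of the generators, and not merely their dimensions.

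First I would treat the base case $n=0$, where $\bigvee_\Lie^0\PL=\Lie$ and $\bigvee_\Lie^1\PL=\PL$, so the claim reads $\PL\cong\Lie\circ\F(\CycLie)$; this is exactly the statement conjectured in \cite{FPL} and proven in \cite{FPRed}. I would recall it and, more importantly, reinterpret it inside the Greg-tree model of $\greg^C$, so as to fix once and for all a combinatorial dictionary between generators and decorated trees. The point of redoing the base case in the $\greg^C$ language is that it exhibits the generators of $\PL$ over $\Lie$ as the sub-species of Greg trees that are \emph{indecomposable} for the left $\Lie$-action, and identifies their decorations with $\F(\CycLie)$, the innermost cyclic-Lie symmetry being the one that survives after quotienting by the shared bracket.

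Next I would set up the inductive step on $n$. Writing $\bigvee_\Lie^{n+1}\PL\cong\greg^{C_{n+1}}$ and $\bigvee_\Lie^n\PL\cong\greg^{C_n}$ for the coalgebras $C_n\subset C_{n+1}$ furnished by Section~\ref{seq:GGO}, I would describe $M_n$ concretely as the sub-species spanned by those Greg trees that cannot be produced by the left $\greg^{C_n}$-action, i.e.\ that use the newest colour at their outermost layer. Such a tree decomposes into an outer tree — accounting for the free operad $\F$ — whose internal structure, once the outer layer is stripped, is recursively built from the previous $n$ colours. Peeling the colours one layer at a time should produce an $n$-fold nesting of reduced free-operad trees bottoming out at a cyclic-Lie core: this is precisely where the iterated functor $\bar{\F}^{(n)}$ and the innermost $\CycLie$ originate, and the induction relates the core of $M_n$ to $\bar{\F}$ applied to the core of $M_{n-1}$ through the base-case dictionary.

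The main obstacle will be making this layer-peeling \emph{equivariant}: I must produce a natural species isomorphism intertwining all the symmetric group actions, rather than only verifying the equality of exponential generating functions $f_{M_n}=f_{\bigvee_\Lie^n\PL}^{\langle-1\rangle}\circ f_{\bigvee_\Lie^{n+1}\PL}$ (compositional inverse), which I would nonetheless carry out as a consistency check. Concretely, the delicate points are (i) to show that stripping a single colour is compatible with the \emph{reduced} free-operad structure, so that $\bar{\F}$ rather than $\F$ appears at each intermediate layer, reflecting that intermediate layers carry no spurious arity-one part; and (ii) to verify that the innermost decoration is governed by the cyclic symmetry of $\CycLie$, inherited from the base case together with the compatibility of the $n+1$ pre-Lie products over the common bracket. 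Once the resulting bijection of decorated trees is shown to intertwine the actions, Koszulness of $\greg^{C}$ (Theorems~\ref{Theorem:m1} and~\ref{Theorem:m2}) guarantees that no further relations intervene, and the left-module isomorphism follows.
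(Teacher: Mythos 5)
Your plan hinges on an explicit, equivariant ``layer-peeling'' bijection of decorated Greg trees; you correctly flag equivariance as the main obstacle, but you never construct the bijection, and constructing it is essentially as hard as the theorem itself. Worse, the proposed description of the generators is not stable under the module action: under the identification $\bigvee_\Lie^{n+1}\PL\simeq\greg^{(V,\Delta_{\max})}$ the coproduct $\Delta_{\max}(e_k)=\sum_{\max(i,j)=k}e_i\otimes e_j$ splits a black vertex of the newest colour into vertices of \emph{mixed lower} colours during composition, so ``trees using the newest colour at their outermost layer'' is not preserved by the left $\bigvee_\Lie^n\PL$-action, and stripping a layer is not compatible with the deformed grafting on the nose. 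The paper avoids exactly this trap: it never exhibits a combinatorial bijection. Instead it defines the generators \emph{algebraically} --- $\Y_n$ spanned by the elements $(c_n\circ_2 a)\circ_1 b$ with $a,b\in\bigvee_\Lie^n\PL$, together with $\X_n$ (where $a,b\in\Lie$) and the nested species $\Z_n$ --- and builds a chain of surjective species morphisms
$\bigvee_\Lie^n\PL\circ\F(\CycLie\circ\Z_{n-1})\twoheadrightarrow
\bigvee_\Lie^n\PL\circ\F(\X_n\circ\Z_{n-1})\twoheadrightarrow
\bigvee_\Lie^n\PL\circ\F(\Y_n)\twoheadrightarrow
\bigvee_\Lie^n\PL\circ\P(\Y_n)\twoheadrightarrow\bigvee_\Lie^{n+1}\PL$,
for which equivariance is automatic because they are morphisms of species. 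The generating-series computation that you demote to a ``consistency check'' is then the decisive step: a surjective morphism between species with componentwise equal finite dimensions is an isomorphism, so surjectivity plus the identity $f_{\bigvee_\Lie^{n+1}\PL}=f_{\bigvee_\Lie^n\PL}\circ f_{\F(\bar{\F}^{(n)}(\CycLie))}$ (verified via $h((n+1)g-nf)=f$) \emph{is} the proof. Your proposal inverts this logic and leaves the genuinely hard step open while declining the soft argument that makes it unnecessary.

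There is a second concrete gap: the appearance of $\CycLie$ is not ``inherited from the base case.'' The base case concerns $\mu$ in $\PL$ over $\Lie$, whereas here one must show that $c_n$ spans a copy of $\CycLie$ over the previous stage, and the relation (\ref{eq:vpl}) carries quadratic correction terms $\sum_{\max(i,j)=n}(c_i\circ_1 c_j-(c_i\circ_1 c_j).(2\;3))$ that have no analogue when $n=0$. The paper disposes of them by introducing the weight filtration on $\Lie\circ\Z_n$, checking it is a filtration by infinitesimal $\Lie$-bimodules, transporting it to $\bigvee_\Lie^{n+1}\PL$, and observing that the corrections lie one filtration step higher; only in the associated graded does one extract the cyclicity relation $(c_n\circ_2 l)=(c_n\circ_1 l)$ (by averaging $r$ with $r.(1\;3)$), which is what defines the map $\CycLie\to\gr_F\X_n$ in the first place. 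Without this filtration argument you cannot even write down the morphism whose equivariance you propose to verify. Finally, your closing appeal to Koszulness (``no further relations intervene'') is a non sequitur: in this paper Koszulness enters only through the functional equation $f_\P(f_{\P^!}(-t))=-t$ used to compute dimensions, while the freeness statements of Corollaries \ref{Theorem:m3} and \ref{Theorem:m4} come from the Gröbner bases via Dotsenko's theorem; Koszulness is not a mechanism for upgrading a bijection of generators to an isomorphism of left modules.
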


\section*{Acknowledgments}

This work was supported by the French national research agency [grant ANR-20-CE40-0016].

I would like to express my deepest gratitude to my PhD advisor Vladimir Dotsenko for his infinite
patience and for encouraging me in this project. 
I am very thankful to Frédéric Chapoton for his availability to discuss
operadic constructions on combinatorial objects. 
I would like to thank Pedro Tamaroff for our discussions about homotopy colimits of operads which lead
me to consider examples of Remark \ref{Remark:PT}.
And I would also 
like to thank Gaétan Leclerc for our numerous and interesting math related discussions.

\tableofcontents

\section{Recollections on the Lie and pre-Lie operads}\label{seq:recall}

The base field is supposed to be of characteristic $0$. We assume that the reader is familiar with the theory 
of species, an introduction and a study of this theory is done in the book of Bergeron, Labelle and Leroux \cite{Species}. We make an extensive use of Gröbner bases over operads, we refer the reader to the book of Loday and Vallette \cite{AlgOp} and the book of Dotsenko and Bremner \cite{AlgComp} for necessary information about operads, shuffle operads and Gröbner bases over operads. We compute explicit Gröbner bases of several operads, because of the size of those bases, lengthy figures are postponed to the \hyperlink{Appendix}{Appendix}, namely figures 2 to 9. The free operad functor is denoted $\F$ and the operad encoding Lie algebras is denoted $\Lie$. It is generated by a binary skew-symmetric operation $l$ satisfying the Jacobi identity. A presentation of this operad is given by:
$$\Lie=\F(l)/\langle l\circ_1l+(l\circ_1l).(1\;2\;3)+(l\circ_1l).(1\;3\;2)\rangle, $$
with $l.(1\;2)=-l$.
\paragraph{}
The operad encoding pre-Lie algebras is denoted $\PL$. It is generated by a binary operation $x$ satisfying the pre-Lie identity. A presentation of this operad is given by:
$$\PL=\F(x)/\langle (x \circ_1x -x \circ_2x) - (x \circ_1x -x \circ_2x).(2\;3) \rangle$$
The main goal of working with shuffle operads is to put a compatible order on the monomials of the free operad to latter define Gröbner bases. It is quite clear that actions of symmetric groups prevent any hope of such order. Hence, the actions of symmetric groups are disposed of when considering shuffle operads, which explain the need of a notation for $x.(1\;2)$, let us write $x.(1\;2)=y$.
\paragraph{} One needs to be careful with the notations since several ``types'' of trees will be used in this article. Shuffle trees are used to write Gröbner bases. Other trees such that rooted trees and rooted Greg trees are used to describe in a combinatorial way the underlying species of the operads.

\begin{Definition}\label{rec:sh}
  Let $\X$ be an alphabet of symbols such that each symbol has an arity. Informally, each symbol represents an operation with a given arity, and a shuffle tree represents compositions of those operations. A \emph{shuffle tree on the alphabet $\X$} is a rooted planar tree such that:
  \begin{itemize}
    \item Internal vertices are labeled by elements of $\X$ and
    have as many children as the arity of their label.
    \item Leaves are labeled by different integers of $\{1, \dots, n\}$, with $n$ the number of leaves.
    \item The numbering of the leaves must satisfy the \emph{local increasing condition}:\\
    The numbering of the leaves is extended to the internal vertices such that each vertex
    receives the smallest number of its children.
    The local increasing condition is that for each internal vertex, the numbering of its children
    is increasing from left to right.
  \end{itemize}
\end{Definition}
In our particular case, the shuffle trees will always be on an alphabet of symbols of arity $2$, meaning the shuffle trees will be binary trees. Let us write the pre-Lie relation with shuffle trees:
\[
\left(
\vcenter{\hbox{\begin{tikzpicture}[scale=0.2]     
    \node (1) at (-3, 6) {};
    \node (2) at (0, 6) {};
    \node (3) at (1.5, 3) {};
    \node (x2) at (-1.5, 3) {};
    \node (x1) at (0, 0) {};
    \draw[thick] (x2)--(1);
    \draw[thick] (x2)--(2);
    \draw[thick] (x1)--(3);
    \draw[thick] (x1)--(x2);
    \node at (1) {\scalebox{0.8}{$1$}};
    \node at (2) {\scalebox{0.8}{$2$}};
    \node at (3) {\scalebox{0.8}{$3$}};
    \node at (x1) {\scalebox{0.8}{$x$}};
    \node at (x2) {\scalebox{0.8}{$x$}};
\end{tikzpicture}}}-
\vcenter{\hbox{\begin{tikzpicture}[scale=0.2]     
    \node (1) at (-1.5, 3) {};
    \node (2) at (0, 6) {};
    \node (3) at (3, 6) {};
    \node (x2) at (1.5, 3) {};
    \node (x1) at (0, 0) {};
    \draw[thick] (x2)--(2);
    \draw[thick] (x2)--(3);
    \draw[thick] (x1)--(1);
    \draw[thick] (x1)--(x2);
    \node at (1) {\scalebox{0.8}{$1$}};
    \node at (2) {\scalebox{0.8}{$2$}};
    \node at (3) {\scalebox{0.8}{$3$}};
    \node at (x1) {\scalebox{0.8}{$x$}};
    \node at (x2) {\scalebox{0.8}{$x$}};
\end{tikzpicture}}} \right) - \left(
\vcenter{\hbox{\begin{tikzpicture}[scale=0.2]     
    \node (1) at (-3, 6) {};
    \node (2) at (0, 6) {};
    \node (3) at (1.5, 3) {};
    \node (x2) at (-1.5, 3) {};
    \node (x1) at (0, 0) {};
    \draw[thick] (x2)--(1);
    \draw[thick] (x2)--(2);
    \draw[thick] (x1)--(3);
    \draw[thick] (x1)--(x2);
    \node at (1) {\scalebox{0.8}{$1$}};
    \node at (2) {\scalebox{0.8}{$3$}};
    \node at (3) {\scalebox{0.8}{$2$}};
    \node at (x1) {\scalebox{0.8}{$x$}};
    \node at (x2) {\scalebox{0.8}{$x$}};
\end{tikzpicture}}}-
\vcenter{\hbox{\begin{tikzpicture}[scale=0.2]     
    \node (1) at (-1.5, 3) {};
    \node (2) at (0, 6) {};
    \node (3) at (3, 6) {};
    \node (x2) at (1.5, 3) {};
    \node (x1) at (0, 0) {};
    \draw[thick] (x2)--(2);
    \draw[thick] (x2)--(3);
    \draw[thick] (x1)--(1);
    \draw[thick] (x1)--(x2);
    \node at (1) {\scalebox{0.8}{$1$}};
    \node at (2) {\scalebox{0.8}{$2$}};
    \node at (3) {\scalebox{0.8}{$3$}};
    \node at (x1) {\scalebox{0.8}{$x$}};
    \node at (x2) {\scalebox{0.8}{$y$}};
\end{tikzpicture}}} \right)
\]
It is a classical result that $\Lie$ is a suboperad of $\PL$ by the inclusion $l\mapsto x-y$, see for example \cite[Proposition 13.4.6]{AlgOp}. The operad encoding two pre-Lie products sharing a Lie bracket is the colimit of the cospan $\PL\leftarrow\Lie\rightarrow\PL$ in the category of operads, it is the coproduct of two copies of $\PL$ fibered over $\Lie$. We note it $\bigvee_\Lie^2\PL$. A presentation of this operad is given by:
\begin{multline*}
\bigvee_\Lie^2\PL=\F(x_1, x_2)/\langle(x_1 \circ_1x_1 -x_1 \circ_2x_1) - (x_1 \circ_1x_1 -x_1 \circ_2x_1).(2\;3)\;;\\
\qquad\qquad(x_2 \circ_1x_2 -x_2 \circ_2x_2) - (x_2 \circ_1x_2 -x_2 \circ_2x_2).(2\;3)\;; 
(x_1-x_2)-(x_1-x_2).(1\;2)\rangle
\end{multline*}
with the notation $x_1.(1\;2)=y_1$ and $x_2.(1\;2)=y_2$. This presentation is not quadratic, indeed the last relation is linear, a quadratic presentation will be given in Section \ref{seq:GGO}. However, this presentation is enough to compute the first arity-wise dimensions of this operad either by hand or with a computer using the Haskell calculator written by Dotsenko and Heijltjes \cite{Hask}.\\
\begin{table*}[h]
\begin{tabular}{|r|c|c|c|c|c|c|c|}
\hline
Arity & $1$ & $2$ & $3$ & $4$ & $5$ & $\cdots$ & $n$ \\
\hline 
\hline
$\dim(\Lie(n))$ & $1$ & $1$ & $2$ & $6$ & $24$ & $\cdots$ & $(n-1)!$\\
\hline
$\dim(\PL(n))$ & $1$ & $2$ & $9$ & $64$ & $625$ & $\cdots$ & $n^{n-1}$ \\
\hline
$\dim\big(\bigvee_\Lie^2\PL(n)\big)$ & $1$ & $3$ & $22$ & $262$ & ? & ? & ? \\
\hline
\end{tabular}
\end{table*}

One may recognize in the last line the first terms of the sequence of the number of rooted Greg trees with $n$ white vertices \oeis{A005264} in \cite{Oeis}. A rooted Greg tree is a rooted tree with black and white vertices such that the black vertices have at least two children. Let us state the following definition:
\begin{Definition}
  Two species $\mathcal{S}$ and $\T$ are \emph{equinumerous} if they have the same sequence of dimensions.
  It is a weaker notion than the isomorphism of species.
\end{Definition}
This leads to the natural following questions:
\begin{itemize}
  \item Are $\bigvee_\Lie^2\PL$ and the species of rooted Greg trees \emph{equinumerous}?
  \item Are they \emph{isomorphic as species}?
  \item Are they \emph{isomorphic with their extra algebraic structure}?
\end{itemize}  
In this article we answer positively to first two questions. We then define a naive operadic structure on the species of rooted Greg trees and show that $\bigvee_\Lie^2\PL$ is a deformation of this newly defined operad. We then show that the operad $\bigvee_\Lie^2\PL$ has agreeable algebraic properties namely is binary, quadratic and Koszul. The operad $\bigvee_\Lie^2\PL$ also has the three following freeness properties: 
\begin{itemize}
  \item It is free as a left $\PL$-module.
  \item It is free as a right $\PL$-module.
  \item It has the Nielsen-Schreier property.
\end{itemize}
Those freeness properties will be discussed in Section \ref{seq:free}.

\section{Operadic structure on rooted trees}\label{seq:RT}

Let us give a full construction of the operadic structure on rooted trees defined by Chapoton and Livernet in \cite{PreLie}, in order to generalize it on rooted Greg trees.

\begin{Definition}
  A rooted tree is a connected non-empty non-oriented graph without cycle with a distinguished vertex 
  called the root. Let $I\subset\N$ be finite, vertices are in bijection with $I$.
  The action of the symmetric group is given by the permutation of the labels.
  When drawing rooted trees, root will be at the bottom and leaves at the top.
\end{Definition}

For $S$ a rooted tree, $v$ a vertex of $S$, the forest $FS=\{S_1, \dots, S_k\}$ the forest of the children of $v$ and $B$ the rooted tree below $v$. Let us introduce the following notation:
\[
  \vcenter{\hbox{\begin{tikzpicture}[scale=0.5]     
    \node (S) at (0, 0) {};
    
    \node[isosceles triangle, 
    isosceles triangle apex angle=60, 
    draw, 
    rotate=270, 
    fill=white!50, 
    minimum size =24pt] (ST) at (S){};



    \draw[circle, fill=black] (ST.east) circle [radius=2pt];

    \node at (S) {\scalebox{0.8}{$S$}};
  \end{tikzpicture}}}
  \qquad
  =
  \qquad
  \vcenter{\hbox{\begin{tikzpicture}[scale=0.5]     
    \node (lv) at (0, 0) {};
    \node (SD) at (0, -3) {};
    \node (SU1) at (-2, 3) {};
    \node (dot) at (0, 3) {};
    \node (SUk) at (2, 3) {};

    \node[isosceles triangle, 
    isosceles triangle apex angle=60, 
    draw, 
    rotate=-90, 
    fill=white!50, 
    minimum size =24pt] (SDT) at (SD){};
    \node[isosceles triangle, 
    isosceles triangle apex angle=60, 
    draw, 
    rotate=270, 
    fill=white!50, 
    minimum size =24pt] (SU1T) at (SU1){};
    \node[isosceles triangle, 
    isosceles triangle apex angle=60, 
    draw, 
    rotate=270, 
    fill=white!50, 
    minimum size =24pt] (SUkT) at (SUk){};

    \draw[thick] (SU1T.east)--(lv);
    \draw[thick] (SUkT.east)--(lv);
    \draw[thick] (lv)--(SDT.west);

    \node at (dot) {$\cdots$};

    \draw[circle, fill=white] (lv) circle [radius=24pt];
    \draw[circle, fill=black] (SDT.east) circle [radius=2pt];
    \draw[circle, fill=black] (SU1T.east) circle [radius=2pt];
    \draw[circle, fill=black] (SUkT.east) circle [radius=2pt];
    
    \node at (lv) {\scalebox{1}{$v$}};

    \node at (SU1) {\scalebox{0.8}{$S_1$}};
    \node at (SUk) {\scalebox{0.8}{$S_k$}};
    \node at (SD) {\scalebox{0.8}{$B$}};
  \end{tikzpicture}}}
  \qquad
  =
  \qquad
  \vcenter{\hbox{\begin{tikzpicture}[scale=0.5]     
    \node (lv) at (0, 0) {};
    \node (SD) at (0, -2.7) {};
    \node (F) at (0, 3) {};
    
    \node[isosceles triangle, 
    isosceles triangle apex angle=60, 
    draw, 
    rotate=270, 
    fill=white!50, 
    minimum size =24pt] (SDT) at (SD){};
    \node[isosceles triangle, 
    isosceles triangle apex angle=60, 
    draw, 
    rotate=270, 
    fill=white!50, 
    minimum size =24pt] (FT) at (F){};

    \draw[thick, double] (FT.east)--(lv);
    \draw[thick] (lv)--(SDT.west);


    \draw[circle, fill=white] (lv) circle [radius=24pt];
    \draw[circle, fill=black] (FT.east) circle [radius=2pt];
    \draw[circle, fill=black] (SDT.east) circle [radius=2pt];
    
    \node at (lv) {\scalebox{1}{$v$}};

    \node at (F) {\scalebox{0.8}{$FS$}};
    \node at (SD) {\scalebox{0.8}{$B$}};
  \end{tikzpicture}}}
\]
We use circles to represent vertices, triangles to represent trees or forests and double edges to represent that each tree of the forest $FS$ is grafted to $v$.

\begin{Definition}\label{Definition:fall}
  Let $S$ and $T$ be two rooted trees labeled over disjoint sets and $V(S)$ the set of vertices of $S$, let $S\star T$ be the \emph{fall product} of $T$ over $S$ defined by:
  \[
  S\star T = \resum[1]_{v\in V(S)}
  \vcenter{\hbox{\begin{tikzpicture}[scale=0.5]     
    \node (lv) at (0, 0) {};
    \node (SD) at (0, -3) {};
    \node (T) at (1.5, 3) {};
    \node (FS) at (-1.5, 3) {};

    \node[isosceles triangle, 
    isosceles triangle apex angle=60, 
    draw, 
    rotate=270, 
    fill=white!50, 
    minimum size =24pt] (SDT) at (SD){};
    \node[isosceles triangle, 
    isosceles triangle apex angle=60, 
    draw, 
    rotate=270, 
    fill=white!50, 
    minimum size =24pt] (TT) at (T){};
    \node[isosceles triangle, 
    isosceles triangle apex angle=60, 
    draw, 
    rotate=270, 
    fill=white!50, 
    minimum size =24pt] (FST) at (FS){};    
    \draw[thick] (TT.east)--(lv);
    \draw[thick, double] (FST.east)--(lv);
    \draw[thick] (lv)--(SDT.west);


    \draw[circle, fill=white] (lv) circle [radius=24pt];
    \draw[circle, fill=black] (TT.east) circle [radius=2pt];
    \draw[circle, fill=black] (FST.east) circle [radius=2pt];
    \draw[circle, fill=black] (SDT.east) circle [radius=2pt];

    \node at (lv) {\scalebox{1}{$v$}};

    \node at (T) {\scalebox{0.8}{$T$}};
    \node at (FS) {\scalebox{0.8}{$FS$}};
    \node at (SD) {\scalebox{0.8}{$B$}};
  \end{tikzpicture}}}
  \]
  For the sake of readability, let us omit the sums, the tree $B$ and the forest $FS$:
  \[
    S\star T  =
    \vcenter{\hbox{\begin{tikzpicture}[scale=0.5]     
      \node (lv) at (0, 0) {};
      \node (T) at (0, 3) {};

      \node[isosceles triangle, 
      isosceles triangle apex angle=60, 
      draw, 
      rotate=270, 
      fill=white!50, 
      minimum size =24pt] (TT) at (T){};
      
      \draw[thick] (TT.east)--(lv);


      \draw[circle, fill=white] (lv) circle [radius=24pt];
      \draw[circle, fill=black] (TT.east) circle [radius=2pt];
      
      \node at (lv) {\scalebox{1}{$v$}};

      \node at (T) {\scalebox{0.8}{$T$}};
    \end{tikzpicture}}}  
  \]
\end{Definition}

\begin{Example}
  Let compute $(R\star S)\star T$ to grasp the definition of the fall product. Let $v_r$ and $v_{r'}$ be generic vertices of $R$ and $v_s$ a generic vertex of $S$.
    \[
      (R\star S)\star T=
      \vcenter{\hbox{\begin{tikzpicture}[scale=0.5]     
        \node (lv) at (0, 0) {};
        \node (r) at (0, -3) {};
        \node (T) at (0, 3) {};

        \node[isosceles triangle, 
        isosceles triangle apex angle=60, 
        draw, 
        rotate=270, 
        fill=white!50, 
        minimum size =24pt] (TT) at (T){};
        
        \draw[thick] (TT.east)--(lv);
        \draw[thick, dashed] (r)--(lv);


        \draw[circle, fill=white] (lv) circle [radius=24pt];
        \draw[circle, fill=white] (r) circle [radius=24pt];
        \draw[circle, fill=black] (TT.east) circle [radius=2pt];
        
        \node at (lv) {\scalebox{1}{$v_s$}};
        \node at (r) {\scalebox{1}{$v_r$}};

        \node at (T) {\scalebox{0.8}{$T$}};
      \end{tikzpicture}}}
      +
      \vcenter{\hbox{\begin{tikzpicture}[scale=0.5]     
        \node (r) at (3, 0) {};
        \node (s) at (0, 0) {};
        \node (S) at (3, 3) {};
        \node (T) at (0, 3) {};

        \node[isosceles triangle, 
        isosceles triangle apex angle=60, 
        draw, 
        rotate=270, 
        fill=white!50, 
        minimum size =24pt] (TT) at (T){};
        \node[isosceles triangle, 
        isosceles triangle apex angle=60, 
        draw, 
        rotate=270, 
        fill=white!50, 
        minimum size =24pt] (ST) at (S){};

        \draw[thick] (TT.east)--(s);
        \draw[thick] (ST.east)--(r);
        \draw[thick, dashed] (r)--(s);


        \draw[circle, fill=white] (s) circle [radius=24pt];
        \draw[circle, fill=white] (r) circle [radius=24pt];
        \draw[circle, fill=black] (TT.east) circle [radius=2pt];
        \draw[circle, fill=black] (ST.east) circle [radius=2pt];
        
        \node at (s) {\scalebox{1}{$v_{r'}$}};
        \node at (r) {\scalebox{1}{$v_r$}};

        \node at (S) {\scalebox{0.8}{$S$}};
        \node at (T) {\scalebox{0.8}{$T$}};
      \end{tikzpicture}}}
      +
      \vcenter{\hbox{\begin{tikzpicture}[scale=0.5]     
        \node (T) at (1.5, 3) {};
        \node (r) at (0, 0) {};
        \node (S) at (-1.5, 3) {};

        \node[isosceles triangle, 
        isosceles triangle apex angle=60, 
        draw, 
        rotate=270, 
        fill=white!50, 
        minimum size =24pt] (TT) at (T){};
        \node[isosceles triangle, 
        isosceles triangle apex angle=60, 
        draw, 
        rotate=270, 
        fill=white!50, 
        minimum size =24pt] (ST) at (S){};
        
        \draw[thick] (TT.east)--(r);
        \draw[thick] (ST.east)--(r);


        \draw[circle, fill=white] (r) circle [radius=24pt];
        \draw[circle, fill=black] (TT.east) circle [radius=2pt];
        \draw[circle, fill=black] (ST.east) circle [radius=2pt];
        
        \node at (r) {\scalebox{1}{$v_r$}};

        \node at (S) {\scalebox{0.8}{$S$}};
        \node at (T) {\scalebox{0.8}{$T$}};
      \end{tikzpicture}}}
    \]
  Here $S$ fall of the vertex $v_r$ of $R$, then either $T$ fall on a vertex of $S$, or $T$ fall on another vertex $v_{r'}$ of $R$, or $T$ fall on the vertex $v_r$ of $R$.\\
  The dotted edges represent a path between the two vertices that may be longer than one edge. The dotted edge is horizontal if the two vertices can be one below the other, one above the other or neither of them.
\end{Example}

\begin{Proposition}
  The fall product is pre-Lie.
\end{Proposition}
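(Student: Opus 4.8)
The plan is to verify the (right) pre-Lie identity directly from Definition \ref{Definition:fall}, by showing that the associator of the fall product is symmetric in its last two arguments. Recall that a binary product $\star$ is pre-Lie exactly when
$$(S\star T)\star U - S\star(T\star U) = (S\star U)\star T - S\star(U\star T),$$
so it suffices to prove that $(S\star T)\star U - S\star(T\star U)$ is invariant under exchanging $T$ and $U$.

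First I would expand the left-hand side. By definition $S\star T$ is the sum over $v\in V(S)$ of the tree $S\star_v T$ obtained by grafting $T$ as a new child of $v$; its vertex set is $V(S)\sqcup V(T)$. Applying $\star\,U$ then grafts $U$ as a new child of some vertex $w$ of $S\star_v T$, where $w$ ranges over $V(S)\sqcup V(T)$. I would split this double sum according to whether $w$ lies in $V(T)$ or in $V(S)$.

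The key observation is that the terms with $w\in V(T)$ reassemble precisely into $S\star(T\star U)$: grafting $T$ below $v\in V(S)$ and then $U$ below a vertex $w$ of $T$ is the same data as grafting the tree $T\star_w U$ below $v$, and summing over $v$ and $w$ recovers $S\star(T\star U)$. After subtracting, the associator reduces to the sum over ordered pairs $(v,v')\in V(S)\times V(S)$ of the tree obtained from $S$ by grafting $T$ as a new child of $v$ and $U$ as a new child of $v'$. Since the trees here are non-planar (the children of a vertex form a set, with no planar order), the two grafts act independently, and in the diagonal case $v=v'$ they simply add two new children to the same vertex; in all cases the resulting tree depends only on which forest is attached to which vertex. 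Hence the whole sum over $V(S)\times V(S)$ is manifestly symmetric under the relabeling $(v,v')\mapsto(v',v)$ together with $T\leftrightarrow U$, which is exactly the pre-Lie identity.

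The only point requiring attention, and the main (if mild) obstacle, is the bookkeeping that identifies the $w\in V(T)$ terms with $S\star(T\star U)$ correctly, together with the check that grafting $T$ below $v$ and $U$ below $v'$ yields a well-defined rooted tree independent of the order of the two grafts, including the diagonal $v=v'$. This commutativity is automatic in the non-planar setting, so once the two sums are matched up the symmetry in $T$ and $U$ is immediate.
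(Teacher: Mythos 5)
Your proof is correct and follows essentially the same route as the paper: expand the iterated fall product, recognize the terms where the second tree falls onto a vertex of the first grafted tree as exactly $S\star(T\star U)$, and observe that the leftover sum over pairs of vertices of $S$ (including the diagonal) is symmetric in $T$ and $U$. The paper presents the same decomposition pictorially, with the three-term expansion of $(R\star S)\star T$ playing the role of your case split on $w\in V(T)$ versus $w\in V(S)$.
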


\begin{proof}
  We have computed $(R\star S)\star T$, moreover we have:
  \[
    R\star (S\star T)=
    \vcenter{\hbox{\begin{tikzpicture}[scale=0.5]     
      \node (lv) at (0, 0) {};
      \node (r) at (0, -3) {};
      \node (T) at (0, 3) {};

      \node[isosceles triangle, 
      isosceles triangle apex angle=60, 
      draw, 
      rotate=270, 
      fill=white!50, 
      minimum size =24pt] (TT) at (T){};
      
      \draw[thick] (TT.east)--(lv);
      \draw[thick, dashed] (r)--(lv);


      \draw[circle, fill=white] (lv) circle [radius=24pt];
      \draw[circle, fill=white] (r) circle [radius=24pt];
      \draw[circle, fill=black] (TT.east) circle [radius=2pt];
      
      \node at (lv) {\scalebox{1}{$v_s$}};
      \node at (r) {\scalebox{1}{$v_r$}};

      \node at (T) {\scalebox{0.8}{$T$}};
    \end{tikzpicture}}}
  \]
  Hence, $(R\star S)\star T-R\star (S\star T)$ is symmetric in $S$ and $T$. Hence, the fall product is pre-Lie.
\end{proof}

\begin{Remark}
  The fall product allows us to graft a rooted tree $T$ over another rooted tree $S$ on all 
  possible vertices of $S$. However, as we can see in the above computation, a naive composition
  of the fall product is not enough to make several trees fall on the same tree.
  Indeed, when computing $(R\star S)\star T$, we have that $S$ fall on $R$, but $T$ can
  either fall on $S$ or on $R$. 
  The solution is to use the symmetric brace products.
\end{Remark}

The symmetric brace products were first introduced by Lada and Markl in \cite{sBr} and the following formula to get the symmetric brace products from a pre-Lie product was given by Oudom and Guin in \cite{BRPL}:
\begin{itemize}
  \item $Br(S)=S$
  \item $Br(S;T)=S\star T$
  \item $Br(S;T_1, \dots, T_{n+1})=Br(S;T_1, \dots, T_n)\star T_{n+1}-
  \sum_{i=1}^nBr(S;T_1, \dots, T_i\star T_{n+1}, \dots, T_n)$
\end{itemize}
The symmetric brace product $Br(S;T_1, \dots, T_n)$ is the sum of all possible ways to graft the trees $T_1, \dots, T_n$ on vertices of $S$. It is symmetric in the $T_i$'s. For $FT=\{T_1, \dots, T_n\}$, we will write $Br(S;FT)$.
\paragraph{}
Let us recall from \cite[Definition 5.3.7]{AlgOp} that an operad structure on a species can be given by a collection of operations $\circ_i$ of arity $2$, the partial compositions satisfying the sequential and parallel composition axioms. Let us define these operations on rooted trees using the symmetric brace products.

\begin{Definition}\label{Definition:inf}
  Let $T$ and $S$ be two rooted trees, let $i$ be a label of a vertex of $T$ and $v$ this vertex.
  \[
    \vcenter{\hbox{\begin{tikzpicture}[scale=0.5]     
      \node (T) at (0, 0) {};

      \node[isosceles triangle, 
      isosceles triangle apex angle=60, 
      draw, 
      rotate=270, 
      fill=white!50, 
      minimum size =24pt] (TT) at (T){};      



      \draw[circle, fill=black] (TT.east) circle [radius=2pt];
      
      \node at (T) {\scalebox{0.8}{$T$}};

    \end{tikzpicture}}}
    =
    \vcenter{\hbox{\begin{tikzpicture}[scale=0.5]     
      \node (FT) at (0, 3) {};
      \node (i) at (0, 0) {};
      \node (TD) at (0, -3) {};

      \node[isosceles triangle, 
      isosceles triangle apex angle=60, 
      draw, 
      rotate=270, 
      fill=white!50, 
      minimum size =24pt] (FTT) at (FT){};
      \node[isosceles triangle, 
      isosceles triangle apex angle=60, 
      draw, 
      rotate=270, 
      fill=white!50, 
      minimum size =24pt] (TDT) at (TD){};

      \draw[thick, double] (FTT.east)--(i);
      \draw[thick] (TDT.west)--(i);


      \draw[circle, fill=white] (i) circle [radius=24pt];
      \draw[circle, fill=black] (FTT.east) circle [radius=2pt];
      \draw[circle, fill=black] (TDT.east) circle [radius=2pt];
      
      \node at (i) {\scalebox{1}{$v$}};
      \node at (FT) {\scalebox{0.8}{$FT$}};
      \node at (TD) {\scalebox{0.8}{$B$}};

    \end{tikzpicture}}}
  \]
  Let us consider $U=Br(S;FT)$, then the partial composition $\circ_i$ is defined by:
  \[
    T\circ_i S = 
    \vcenter{\hbox{\begin{tikzpicture}[scale=0.5]     
      \node (T) at (0, 0) {};
      \node (i) at (0, 3) {};
      \node (S) at (0, 6) {};

      \node[isosceles triangle, 
      isosceles triangle apex angle=60, 
      draw, 
      rotate=270, 
      fill=white!50, 
      minimum size =24pt] (TT) at (T){};
      \node[isosceles triangle, 
      isosceles triangle apex angle=60, 
      draw, 
      rotate=270, 
      fill=white!50, 
      minimum size =24pt] (ST) at (S){}; 

      \draw[thick] (TT.west)--(ST.east);


      \draw[circle, fill=black] (TT.east) circle [radius=2pt];
      \draw[circle, fill=black] (ST.east) circle [radius=2pt];
      \draw[circle, fill=white] (i) circle [radius=24pt];

      \node[cross=13pt] at (i) {};
      
      \node at (T) {\scalebox{0.8}{$B$}};
      \node at (i) {\scalebox{0.8}{$v$}};
      \node at (S) {\scalebox{0.8}{$U$}};

    \end{tikzpicture}}}
  \]
  The children of $v$ fall on $S$, then the result of this symmetric brace product is grafted on $v$, and finally the vertex $v$ is removed. This is exactly the insertion of $S$ in $T$ at the vertex $v$, with all the children of $v$ falling on $S$. Although this construction is not described the same way as in \cite{PreLie}, the two constructions are the same. Hence, those partial compositions satisfy the sequential and parallel composition axioms.
\end{Definition}

\begin{Remark}
  When defining partial compositions on a species $P$, we define
  $$\circ_i:\P(A\sqcup\{i\})\otimes\P(B)\rightarrow\P(A\sqcup B), $$ 
  where $A$ and $B$ are disjoint sets.
  It is equivalent to the definition with 
  $$\circ_i:\P(n)\otimes\P(m)\rightarrow\P(n+m-1)$$
  which involves some renumbering.
\end{Remark}

Let us recall the following result:

\begin{Theorem}\cite[Theorem 1.9]{PreLie}
  Let $\mathcal{RT}$ be the species of rooted trees. The operad $(\mathcal{RT}, \{\circ_i\})$ with the   partial compositions defined as above is isomorphic to the operad $\PL$. Moreover, the isomorphism is   $\vcenter{\hbox{\begin{tikzpicture}[scale=0.2]     
      \node (1) at (0, 0) {};
      \node (2) at (0, 3) {};


      \draw[thick] (1)--(2);


      \draw[circle, fill=white] (1) circle [radius=24pt];
      \draw[circle, fill=white] (2) circle [radius=24pt];
      
      \node at (1) {\scalebox{0.8}{$1$}};
      \node at (2) {\scalebox{0.8}{$2$}};

  \end{tikzpicture}}}\mapsto x$ with $x$ the generator of $\PL$.
\end{Theorem}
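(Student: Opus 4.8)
The plan is to exhibit the isomorphism as the unique operad morphism $\Phi\colon\PL\to\mathcal{RT}$ sending the generator $x$ to the two-vertex corolla $c$ (root labelled $1$, single child labelled $2$), and then to prove it bijective by a combinatorial generation argument combined with a dimension count.

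First I would construct $\Phi$ from the presentation $\PL=\F(x)/\langle\text{pre-Lie}\rangle$. By the universal property of a quotient of a free operad, an operad morphism out of $\PL$ is the same datum as an arity-two element of $\mathcal{RT}$ whose associator is invariant under the transposition $(2\;3)$; I take this element to be $c$. To verify the relation I would first identify the binary operation induced by $c$ under the partial compositions $\circ_i$ with the fall product: unwinding Definition \ref{Definition:inf}, the total composite $\gamma(c;S,T)$ lets the child of the relevant vertex fall onto the inserted tree via the symmetric brace and then grafts the result, which reproduces exactly the defining sum of $S\star T$. Granting this identification, $c\circ_1 c-c\circ_2 c$ is precisely the associator $(R\star S)\star T-R\star(S\star T)$ of the fall product, which the Proposition that the fall product is pre-Lie shows is symmetric in its last two arguments. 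Hence $\Phi$ is a well-defined morphism of operads with $\Phi(x)=c$.

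Next I would prove that $\Phi$ is surjective, i.e.\ that the suboperad $\mathrm{Im}\,\Phi$ generated by $c$ exhausts $\mathcal{RT}$, by induction on the number of vertices $n$. The cases $n\le 2$ are immediate. For $n\ge 3$ and a tree $T$, pick a leaf $\ell$ with parent $p$ and set $T_0=T\setminus\{\ell\}$, a tree on $n-1$ vertices lying in $\mathrm{Im}\,\Phi$ by induction; since $c\in\mathrm{Im}\,\Phi$ as well, the composite $T_0\circ_p c$ lies in $\mathrm{Im}\,\Phi$. Expanding $T_0\circ_p c$ with the brace formula, the term in which all former children of $p$ stay below the inserted edge reproduces $T$, while every other term moves at least one child strictly higher. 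Fixing a suitable height statistic on $n$-vertex trees, the family $\{T_0\circ_p c\}$ obtained this way, as $T$ ranges over all $n$-vertex trees, forms a unitriangular system for that order whose members all lie in $\mathrm{Im}\,\Phi$; inverting it yields $T\in\mathrm{Im}\,\Phi$.

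Finally I would compare dimensions arity by arity: $\dim\mathcal{RT}(n)=n^{n-1}$ by Cayley's formula for labelled rooted trees, and $\dim\PL(n)=n^{n-1}$ is the classical dimension of the pre-Lie operad recorded in the table of Section \ref{seq:recall}. A surjection between spaces of equal finite dimension in each arity is an isomorphism, so $\Phi$ is an isomorphism of operads sending $x$ to $c$. I expect the main obstacle to be the triangularity in the surjectivity step: making precise a height statistic for which $T_0\circ_p c$ is unitriangular, and checking that all correction terms are genuinely ``higher,'' so that the induction on that order closes cleanly.
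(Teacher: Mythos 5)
Your construction of $\Phi$ is sound: defining it on the presentation $\F(x)/\langle\text{pre-Lie}\rangle$, identifying the binary product induced by the two-vertex tree $c$ with the fall product (indeed $\gamma(c;S,T)=S\star T$, since inserting $S$ at the root of $c\circ_2 T$ makes $T$ fall on $S$ via the brace), and invoking the symmetry of the associator of $\star$ is exactly the right way to get a well-defined morphism. The surjectivity step also works: with the statistic ``total path length'' (sum of the depths of all vertices), $T_0\circ_p c$ equals $T$ plus trees in which some former children of $p$ have fallen onto the new leaf, and each such correction has strictly larger statistic, so a downward induction on the statistic (with chains as the vacuous extreme case, where $p$ is a leaf of $T_0$ and there are no corrections at all) closes cleanly; the triangularity you worried about is routine.

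The genuine gap is the final dimension count: you take $\dim\PL(n)=n^{n-1}$ as ``classical,'' but this formula is itself a corollary of the theorem you are proving --- the table in Section \ref{seq:recall} records it as known precisely because of Chapoton--Livernet. Your surjection $\Phi\colon\PL(n)\twoheadrightarrow\mathcal{RT}(n)$ together with Cayley's formula only yields $\dim\PL(n)\geq n^{n-1}$, and nothing in your argument bounds $\dim\PL(n)$ from above, so injectivity does not follow. The missing (and historically nontrivial) half is the upper bound $\dim\PL(n)\leq n^{n-1}$: one must show that $n^{n-1}$ tree-shaped monomials span $\PL(n)$, e.g.\ by using the pre-Lie relation as a rewriting rule and an induction straightening arbitrary monomials onto that family, or by exhibiting a confluent (Gr\"obner-basis style) rewriting system for the pre-Lie relation whose normal forms in arity $n$ number $n^{n-1}$ --- the paper's own Proposition \ref{Proposition:GB} illustrates this technique, where a lower bound on dimensions certifies confluence without checking all critical pairs, and here your surjection onto $\mathcal{RT}$ could supply exactly that lower bound. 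Note also that the paper itself does not reprove this statement (it cites \cite{PreLie}, only remarking in Definition \ref{Definition:inf} that its description of the partial compositions agrees with the original one), so the standard of comparison is Chapoton--Livernet's proof, whose essential content is precisely the spanning argument your proposal omits.
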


\section{Rooted Greg trees and the Greg operad}\label{seq:RGTO}

Greg trees were introduced by Greg \cite{Hgreg} and Maas \cite{Hmaas} in order to state a combinatorial problem arising from textual criticism, which was solved by Flight in \cite{Greg}.\\

\begin{Definition}
  A \emph{rooted Greg tree} $T$ is a rooted tree with two colors of vertices black and white such that:
  \begin{itemize}
    \item the white vertices are in bijection with $I$ a finite set of labels, the number of white vertices is
    the \emph{arity} of $T$;
    \item the black vertices are unlabeled and have at least two children, the number of black vertices is
    the \emph{weight} of $T$.
  \end{itemize}  
\end{Definition} 

Let us introduce the following notation:
\begin{itemize}
  \item $WV(T)$ is the set of white vertices of $T$, $BV(T)$ the set of black vertices of $T$ and $V(T)$
  the set of vertices of $T$.
  \item $G_k(n)$ is the set of Greg trees of arity $n$ with white vertices labeled by 
  $\{1, \dots, n\}$ and weight $k$.
  \item $G(n)$ is the set of Greg trees of arity $n$ with white vertices labeled by 
  $\{1, \dots, n\}$.
  \item $\G_k(n)$ is the vector space spanned by $G_k(n)$, $\G(n)$ is
  the vector space spanned by $G(n)$, and $\G$ is the species of Greg trees.
\end{itemize}
The action of the symmetric group on rooted Greg trees is the natural one permuting the labels of the white vertices.\\

\begin{figure}[ht]
  \caption{Greg trees of arity $n=2$}
  \label{fig:gregtrees2}
   \[
    \vcenter{\hbox{\begin{tikzpicture}[scale=0.7]
      \draw[thick] (0, 0)--(0, 2);
   
      \draw[fill=white, thick] (0, 0) circle [radius=15pt];
      \draw[fill=white, thick] (0, 2) circle [radius=15pt];
      \node at (0, 0) {\scalebox{1}{1}};
      \node at (0, 2) {\scalebox{1}{2}};
    \end{tikzpicture}}}
    \qquad
      \vcenter{\hbox{\begin{tikzpicture}[scale=0.7]
        \draw[thick] (0, 0)--(0, 2);
     
        \draw[fill=white, thick] (0, 0) circle [radius=15pt];
        \draw[fill=white, thick] (0, 2) circle [radius=15pt];
        \node at (0, 0) {\scalebox{1}{2}};
        \node at (0, 2) {\scalebox{1}{1}};
      \end{tikzpicture}}}
      \qquad
      \vcenter{\hbox{\begin{tikzpicture}[scale=0.7]
        \draw[thick] (0, 0.5)--(-1, 2);
        \draw[thick] (0, 0.5)--(1, 2);
       
        \draw[fill=black, thick] (0, 0.5) circle [radius=15pt];
        \draw[fill=white, thick] (-1, 2) circle [radius=15pt];
        \draw[fill=white, thick] (1, 2) circle [radius=15pt];
        \node at (0, 0.5) { };
        \node at (-1, 2) {\scalebox{1}{1}};
        \node at (1, 2) {\scalebox{1}{2}};
      \end{tikzpicture}}}
   \]
\end{figure}

\begin{Remark}
  The condition that the black vertices have at least two children ensures that $G(n)$ is finite. Another important fact is that $(G_0(n))_{n\in\N}$ is the species of rooted trees. Hence, in order to define an operad structure on the rooted Greg trees, one may try to mimic and generalize the construction on rooted trees of \cite{PreLie}.
\end{Remark}

\begin{Proposition}
  Let $f_\G(t)$ be the exponential generating series of $\G$. Then $f_\G(t)$ is the inverse under composition of $(2t+1)\exp(-t)-1$.
\end{Proposition}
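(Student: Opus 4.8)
The plan is to derive a functional equation relating $f_\G(t)$ to the generating series of rooted trees, and then recognize that the resulting equation is exactly inverted by $(2t+1)\exp(-t)-1$. The key structural observation is that a rooted Greg tree is, up to the coloring data, obtained from the recursive structure of rooted trees by allowing the root to be either white (labeled) or black (unlabeled with at least two children). I would first set up the combinatorial decomposition: a rooted Greg tree consists of a root together with an (unordered, since the species is symmetric) forest of rooted Greg trees attached to it. This gives a species-level fixed-point equation. If the root is white it carries a label and may have any number of children (including zero); if the root is black it carries no label and must have at least two children. Translating this into exponential generating series, writing $f=f_\G(t)$, yields an equation of the shape
\[
  f = t\,\exp(f) + \bigl(\exp(f) - 1 - f\bigr),
\]
where the first term accounts for a white root (the factor $t$ records the one new label, $\exp(f)$ the unordered forest of subtrees) and the parenthesized term accounts for a black root carrying no label, with the $-1-f$ removing the forbidden cases of zero or one child.

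Next I would simplify this functional equation algebraically. Combining terms gives $f = (t+1)\exp(f) - 1 - f$, hence $2f + 1 = (t+1)\exp(f)$, i.e.
\[
  (2f+1)\exp(-f) = t+1.
\]
Rewriting, this says $(2f+1)\exp(-f) - 1 = t$. In other words, if we set $g(s) = (2s+1)\exp(-s) - 1$, then the functional equation reads precisely $g(f(t)) = t$. Since $g(0)=0$ and $g'(0) = (2 - (2\cdot 0 + 1))\cdot 1 = 1 \neq 0$, the series $g$ is invertible under composition, and the identity $g(f(t)) = t$ together with $f(0)=0$ shows that $f_\G(t)$ is exactly the compositional inverse of $g(s) = (2s+1)\exp(-s) - 1$, which is the claim.

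The main obstacle I anticipate is justifying the combinatorial decomposition at the level of species so that it transfers cleanly to exponential generating series, in particular getting the bookkeeping of the two root-colors correct. One must be careful that the black-root case is genuinely unlabeled (contributing no factor of $t$) and that the ``at least two children'' constraint is imposed only on black vertices, which is what forces the subtraction of the constant and linear terms $1+f$ rather than of $1$ alone; miscounting here would change $g$ by exactly the terms that matter. A secondary subtlety is confirming that the forest of children is correctly modeled by $\exp(f)$ rather than some other functor, which relies on the white vertices' labels being distributed over the subtrees as a set partition — the standard exponential formula for the species of (unordered) forests. Once the decomposition is set up correctly, the remaining algebra reducing it to $(2f+1)\exp(-f)=t+1$ is a routine manipulation, and the nonvanishing of $g'(0)$ guaranteeing compositional invertibility is immediate.
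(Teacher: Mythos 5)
Your proof is correct and follows essentially the same route as the paper: the paper proves the generalized statement (Proposition \ref{Proposition:egsgc}) via the species decomposition $\G^V = X\cdot E(\G^V) + nE_{\geq 2}(\G^V)$, which for $n=1$ is exactly your equation $f = t\exp(f) + (\exp(f)-f-1)$, and the same rearrangement to $(2f+1)\exp(-f)-1 = t$ appears in the paper's proof that $\G^C$ is isomorphic to $\greg^C$. Your additional check that $g'(0)\neq 0$ guarantees compositional invertibility is a harmless refinement of the same argument.
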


Let us naively generalize the construction described in the last section to the rooted Greg trees. Definition \ref{Definition:fall} of the fall product is the same, it is straightforward to check that it is a pre-Lie product on $\G$. Definition \ref{Definition:inf} of partial compositions is also the same. However, one may want to check that it satisfies the sequential and parallel composition axioms stated in \cite[Definition 5.3.7]{AlgOp}.

\begin{Proposition}\label{Proposition:inf}
  The partial compositions on $\G$ satisfy the sequential and parallel composition axioms.
\end{Proposition}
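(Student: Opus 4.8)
The plan is to verify directly the two operad axioms (sequential and parallel composition) for the partial compositions $\circ_i$ on $\G$, exploiting the fact that these compositions are built out of the symmetric brace products $Br(S;FT)$, which in turn are assembled from the fall product $\star$. The key observation driving the approach is that the black-vertex condition (at least two children) plays no role whatsoever in the combinatorics of grafting: the fall product and the brace products are defined purely in terms of the underlying rooted-tree structure, treating black and white vertices uniformly as ``vertices on which a subtree may fall.'' Consequently, the verification is essentially identical to the one implicitly done for $\mathcal{RT}$ in \cite{PreLie}, and the whole point is that it transfers verbatim once we check that the class of rooted Greg trees is closed under the relevant operations.

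First I would recall that for rooted trees the symmetric brace products $Br(S;-)$ arise from a genuine pre-Lie product $\star$ via the Oudom--Guin formula, and that this is exactly the data needed to produce a well-defined operad: the Guin--Oudom construction shows that the free pre-Lie algebra (equivalently, grafting of rooted trees) yields compositions satisfying the operad axioms. So the structural identities among the $Br(S;FT)$ that underlie the axioms are formal consequences of the pre-Lie relation for $\star$, which we have already established holds on $\G$ (the proof that $(R\star S)\star T - R\star(S\star T)$ is symmetric in $S,T$ used nothing about vertex colors). Thus the sequential axiom $(T\circ_i S)\circ_j R = T\circ_i(S\circ_j R)$ for $j$ a label of $S$, and the disjoint cases, reduce to the same brace-product manipulations as in the rooted-tree case.

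The two axioms to check are: the sequential (associativity) axiom, which governs $(T\circ_i S)\circ_j R$ when $j$ labels a vertex coming from $S$ versus grafting into $T$ directly; and the parallel axiom $(T\circ_i S)\circ_j R = (T\circ_j R)\circ_i S$ when $i$ and $j$ label distinct vertices of $T$. For the parallel axiom the content is that grafting $S$ at $v_i$ and $R$ at $v_j$ are independent operations on disjoint parts of $T$, so the two orders agree; the only subtlety is that when $v_i$ lies below $v_j$ (or vice versa) the children of $v_i$ may fall onto $S$ which itself may now carry further structure, but because the brace product $Br(S;FT)$ symmetrically distributes the falling children, the bookkeeping matches on both sides. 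For the sequential axiom the heart of the matter is that inserting $S$ at $v$ and letting the children of $v$ fall onto $S$, and then inserting $R$, is compatible with first performing $S\circ_j R$: this is precisely the associativity of iterated brace products, i.e.\ the identity $Br(Br(S;FT);FU)=Br(S;FT\sqcup FU)$-type compatibility, which is the defining coherence of the symmetric brace structure.

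The main obstacle I anticipate is purely notational rather than conceptual: one must carefully track, in the sequential axiom, how the forest of children $FT$ of the contracted vertex $v$ interacts with the secondary insertion, since after falling onto $S$ these children are distributed across many vertices of $S$ and one must confirm that a subsequent composition at a vertex $j$ of $S$ sees the correct local forest. To manage this I would phrase everything in terms of the remark's set-indexed compositions $\circ_i:\P(A\sqcup\{i\})\otimes\P(B)\to\P(A\sqcup B)$, so that relabeling is automatic, and then reduce each axiom to an identity among symmetric brace products that holds formally for any pre-Lie algebra. Since $\G$ is closed under $\star$ (a black vertex only ever gains children under grafting, never loses them, so the $\geq 2$ children condition is preserved), the brace products stay within $\G$, and the rooted-tree verification applies unchanged. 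I would therefore present the proof as: (i) observe closure of $\G$ under $\star$ and hence under $Br$; (ii) note that the axioms are formal identities in the symmetric brace calculus of any pre-Lie algebra; (iii) conclude that since $\star$ is pre-Lie on $\G$, the compositions $\circ_i$ satisfy both axioms, exactly as in \cite{PreLie}.
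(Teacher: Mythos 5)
Your proposal is correct, and its skeleton is in fact the same as the paper's: parallel axiom by disjointness of the two insertions, sequential axiom by a coherence identity for iterated symmetric braces. The difference is in how that identity is handled. The paper gives a self-contained diagrammatic computation: it expands $(T\circ_i S)\circ_j R$ and $T\circ_i(S\circ_j R)$ over all decompositions of the falling forest $FT$ into sub-forests and reduces everything to the equality $V=W$, where $V=Br(R;FU\cup FT_{k+1})$ with $FU=(Br(S_1;FT_1),\dots,Br(S_k;FT_k))$ and $W=Br(Br(R;FS);\widetilde{FT_1})$, checked by a direct combinatorial argument about where trees fall. You instead outsource this to the formal symmetric-brace calculus of Oudom--Guin \cite{BRPL} and Lada--Markl \cite{sBr}, valid in any pre-Lie algebra, together with the observation that $\G$ is closed under $\star$ (black vertices only ever gain children), so the rooted-tree verification of \cite{PreLie} transfers. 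This buys robustness: the identical formal argument applies verbatim to the deformed compositions $\circ_i^\Delta$ of Section 4, which is exactly why the paper can later assert that the proof there ``is the same''; the paper's hands-on route buys concreteness and independence from the literature. Two cautions for a final write-up. First, the identity you display, $Br(Br(S;FT);FU)=Br(S;FT\sqcup FU)$, is false as literally written, since the right-hand side forbids trees of $FU$ from landing on trees of $FT$; the coherence you need --- and the one the paper proves as $V=W$ --- is the distribution law
\[
Br(Br(S;FT);FU)=\sum Br\bigl(S;Br(T_1;FU_1),\dots,Br(T_m;FU_m),FU_{m+1}\bigr),
\]
summed over decompositions $FU=FU_1\sqcup\dots\sqcup FU_{m+1}$; your hedge ``-type compatibility'' suggests you know this, but it must be stated correctly since it is the entire content of the sequential axiom. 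Second, the appeal to Guin--Oudom as ``yielding compositions satisfying the operad axioms'' is a non sequitur as phrased: their construction produces the brace calculus on a pre-Lie algebra, not operad axioms for an insertion operation; what makes your plan sound is your step (ii), the reduction of each axiom to such a brace identity, and that reduction (tracking where the pieces of $FT$ land relative to the vertex being composed at) is precisely the bookkeeping the paper's diagrams carry out and that you would still need to write down.
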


\begin{proof}
  \emph{Parallel composition axiom:}
  Let us compute $(T\circ_i S)\circ_j R$ in the case where $i$ and $j$ are the labels $v_i$ and $v_j$ which are white vertex of $T$. Let us write $T$ the following way:
  \[
  T=
  \vcenter{\hbox{\begin{tikzpicture}[scale=0.5]     
      \node (T) at (0, 0) {};
      \node (i) at (-2, 2.5) {};
      \node (j) at (2, 2.5) {};
      \node (S) at (2, 6) {};
      \node (R) at (-2, 6) {};

      \node[isosceles triangle, 
      isosceles triangle apex angle=60, 
      draw, 
      rotate=270, 
      fill=white!50, 
      minimum size =24pt] (TT) at (T){};
      \node[isosceles triangle, 
      isosceles triangle apex angle=60, 
      draw, 
      rotate=270, 
      fill=white!50, 
      minimum size =32pt] (ST) at (S){};
      \node[isosceles triangle, 
      isosceles triangle apex angle=60, 
      draw, 
      rotate=270, 
      fill=white!50, 
      minimum size =32pt] (RT) at (R){};
      
      \draw[thick] (TT.west)--(i);
      \draw[thick] (TT.west)--(j);
      \draw[thick, double] (RT.east)--(i);
      \draw[thick, double] (ST.east)--(j);


      \draw[circle, fill=white] (i) circle [radius=24pt];
      \draw[circle, fill=white] (j) circle [radius=24pt];
      \draw[circle, fill=black] (TT.east) circle [radius=2pt];
      \draw[circle, fill=black] (ST.east) circle [radius=2pt];
      \draw[circle, fill=black] (RT.east) circle [radius=2pt];
      

      \node at (T) {\scalebox{0.8}{$T_0$}};
      \node at (R) {\scalebox{0.8}{$FT^{(i)}$}};
      \node at (i) {\scalebox{0.8}{$v_i$}};
      \node at (j) {\scalebox{0.8}{$v_j$}};
      \node at (S) {\scalebox{0.8}{$FT^{(j)}$}};
    \end{tikzpicture}}}
  \]
  Applying the definition of the partial composition, we get:
  \[
  (T\circ_i S)\circ_j R =
  T=
  \vcenter{\hbox{\begin{tikzpicture}[scale=0.5]     
      \node (T) at (0, 0) {};
      \node (i) at (-2, 2.5) {};
      \node (j) at (2, 2.5) {};
      \node (S) at (2, 6) {};
      \node (R) at (-2, 6) {};

      \node[isosceles triangle, 
      isosceles triangle apex angle=60, 
      draw, 
      rotate=270, 
      fill=white!50, 
      minimum size =24pt] (TT) at (T){};
      \node[isosceles triangle, 
      isosceles triangle apex angle=60, 
      draw, 
      rotate=270, 
      fill=white!50, 
      minimum size =24pt] (ST) at (S){};
      \node[isosceles triangle, 
      isosceles triangle apex angle=60, 
      draw, 
      rotate=270, 
      fill=white!50, 
      minimum size =24pt] (RT) at (R){};
      
      \draw[thick] (TT.west)--(i);
      \draw[thick] (TT.west)--(j);
      \draw[thick] (RT.east)--(i);
      \draw[thick] (ST.east)--(j);


      \draw[circle, fill=white] (i) circle [radius=24pt];
      \draw[circle, fill=white] (j) circle [radius=24pt];
      \draw[circle, fill=black] (TT.east) circle [radius=2pt];
      \draw[circle, fill=black] (ST.east) circle [radius=2pt];
      \draw[circle, fill=black] (RT.east) circle [radius=2pt];
      
      \node[cross=13pt] at (i) {};
      \node[cross=13pt] at (j) {};


      \node at (T) {\scalebox{0.8}{$T_0$}};
      \node at (R) {\scalebox{0.8}{$U$}};
      \node at (i) {\scalebox{0.8}{$v_i$}};
      \node at (j) {\scalebox{0.8}{$v_j$}};
      \node at (S) {\scalebox{0.8}{$V$}};
    \end{tikzpicture}}}
    = (T\circ_j R)\circ_i S
  \]
  with $U=Br(S;FT^{(i)})$ and $V=Br(R;FT^{(j)})$.
  The parallel axiom is verified.

  \emph{Sequential composition axiom:}
  Let us compute $T\circ_i (S\circ_j R)$ in the case where $i$ is the label of $v_i$ a white vertex of $T$ and $j$ is the label of $v_j$ a white vertex of $S$. Let us write:
  \[
  T=
  \vcenter{\hbox{\begin{tikzpicture}[scale=0.5]     
      \node (T) at (0, 0) {};
      \node (i) at (0, 3) {};
      \node (R) at (0, 6) {};

      \node[isosceles triangle, 
      isosceles triangle apex angle=60, 
      draw, 
      rotate=270, 
      fill=white!50, 
      minimum size =24pt] (TT) at (T){};
      \node[isosceles triangle, 
      isosceles triangle apex angle=60, 
      draw, 
      rotate=270, 
      fill=white!50, 
      minimum size =24pt] (RT) at (R){};

      \draw[thick] (TT.west)--(i);
      \draw[thick, double] (RT.east)--(i);


      \draw[circle, fill=white] (i) circle [radius=24pt];
      \draw[circle, fill=black] (RT.east) circle [radius=2pt];
      \draw[circle, fill=black] (TT.east) circle [radius=2pt];
      

      \node at (T) {\scalebox{0.8}{$T_0$}};
      \node at (R) {\scalebox{0.8}{$FT$}};
      \node at (i) {\scalebox{0.8}{$v_i$}};
    \end{tikzpicture}}}
    \qquad
    \text{and}
    \qquad
    S=
    \vcenter{\hbox{\begin{tikzpicture}[scale=0.5]     
      \node (T) at (0, 0) {};
      \node (i) at (0, 3) {};
      \node (R) at (0, 6) {};

      \node[isosceles triangle, 
      isosceles triangle apex angle=60, 
      draw, 
      rotate=270, 
      fill=white!50, 
      minimum size =24pt] (TT) at (T){};
      \node[isosceles triangle, 
      isosceles triangle apex angle=60, 
      draw, 
      rotate=270, 
      fill=white!50, 
      minimum size =24pt] (RT) at (R){};

      \draw[thick] (TT.west)--(i);
      \draw[thick, double] (RT.east)--(i);


      \draw[circle, fill=white] (i) circle [radius=24pt];
      \draw[circle, fill=black] (RT.east) circle [radius=2pt];
      \draw[circle, fill=black] (TT.east) circle [radius=2pt];
      

      \node at (T) {\scalebox{0.8}{$S_0$}};
      \node at (R) {\scalebox{0.8}{$FS$}};
      \node at (i) {\scalebox{0.8}{$v_j$}};
    \end{tikzpicture}}}
  \]
  Then we have:
  \[
  T\star S=
  \vcenter{\hbox{\begin{tikzpicture}[scale=0.5]     
    \node (T) at (0, -3) {};
    \node (c) at (0, 0) {};
    \node (i) at (0, 6) {};
    \node (R) at (0, 3) {};
    \node (FU) at (-2, 9) {};
    \node (FT) at (2, 9) {};

    \node[isosceles triangle, 
    isosceles triangle apex angle=60, 
    draw, 
    rotate=270, 
    fill=white!50, 
    minimum size =24pt] (TT) at (T){};
    \node[isosceles triangle, 
    isosceles triangle apex angle=60, 
    draw, 
    rotate=270, 
    fill=white!50, 
    minimum size =24pt] (RT) at (R){};
    \node[isosceles triangle, 
    isosceles triangle apex angle=60, 
    draw, 
    rotate=270, 
    fill=white!50, 
    minimum size =32pt] (FUT) at (FU){};
    \node[isosceles triangle, 
    isosceles triangle apex angle=60, 
    draw, 
    rotate=270, 
    fill=white!50, 
    minimum size =32pt] (FTT) at (FT){};

    \draw[thick] (TT.west)--(c);
    \draw[thick] (RT.east)--(c);
    \draw[thick] (RT.west)--(i);
    \draw[thick, double] (FUT.east)--(i);
    \draw[thick, double] (FTT.east)--(i);


    \draw[circle, fill=white] (i) circle [radius=24pt];
    \draw[circle, fill=white] (c) circle [radius=24pt];
    \draw[circle, fill=black] (RT.east) circle [radius=2pt];
    \draw[circle, fill=black] (TT.east) circle [radius=2pt];
    \draw[circle, fill=black] (FUT.east) circle [radius=2pt];
    \draw[circle, fill=black] (FTT.east) circle [radius=2pt];

    \node[cross=13] at (c) {};
    

    \node at (T) {\scalebox{0.8}{$T_0$}};
    \node at (R) {\scalebox{0.8}{$U_0$}};
    \node at (FU) {\scalebox{0.8}{$FU$}};
    \node at (FT) {\scalebox{0.8}{$FT_{k+1}$}};
    \node at (i) {\scalebox{0.8}{$v_j$}};
    \node at (c) {\scalebox{0.8}{$v_i$}};
  \end{tikzpicture}}} 
  \]
  with $FS=(S_1, \dots, S_k)$, $FU=(U_1, \dots, U_k)$, $U_l=Br(S_l;FT_l)$ and $FT=\bigsqcup_{l=0}^{k+1} FT_l$ for every possible such decomposition of $FT$ in (possibly empty) sub-forest. Hence:
  \[
    (T\star S)\star R=
    \vcenter{\hbox{\begin{tikzpicture}[scale=0.5]     
      \node (T) at (0, -3) {};
      \node (R) at (0, 9) {};
      \node (S) at (0, 3) {};
      \node (i) at (0, 0) {};
      \node (j) at (0, 6) {};
  
      \node[isosceles triangle, 
      isosceles triangle apex angle=60, 
      draw, 
      rotate=270, 
      fill=white!50, 
      minimum size =24pt] (TT) at (T){};
      \node[isosceles triangle, 
      isosceles triangle apex angle=60, 
      draw, 
      rotate=270, 
      fill=white!50, 
      minimum size =24pt] (RT) at (R){};
      \node[isosceles triangle, 
      isosceles triangle apex angle=60, 
      draw, 
      rotate=270, 
      fill=white!50, 
      minimum size =24pt] (ST) at (S){};

      \draw[thick] (TT.west)--(i);
      \draw[thick] (i)--(ST.east);
      \draw[thick] (j)--(ST.west);
      \draw[thick] (RT.east)--(j);
  
  
      \draw[circle, fill=white] (i) circle [radius=24pt];
      \draw[circle, fill=white] (j) circle [radius=24pt];
      \draw[circle, fill=black] (RT.east) circle [radius=2pt];
      \draw[circle, fill=black] (TT.east) circle [radius=2pt];
      \draw[circle, fill=black] (ST.east) circle [radius=2pt];

      \node[cross=13] at (i) {};
      \node[cross=13] at (j) {};

  
      \node at (T) {\scalebox{0.8}{$T_0$}};
      \node at (S) {\scalebox{0.8}{$U_0$}};
      \node at (i) {\scalebox{0.8}{$v_i$}};
      \node at (j) {\scalebox{0.8}{$v_j$}};
      \node at (R) {\scalebox{0.8}{$V$}};
    \end{tikzpicture}}}
  \]
  with $V=Br(R;FU\cup FT_{k+1})$.\\
  Let us compute $T\star (S\star R)$:
  \[
    (T\star S)\star R=
    \vcenter{\hbox{\begin{tikzpicture}[scale=0.5]     
      \node (T) at (0, -3) {};
      \node (R) at (0, 9) {};
      \node (S) at (0, 3) {};
      \node (i) at (0, 0) {};
      \node (j) at (0, 6) {};
  
      \node[isosceles triangle, 
      isosceles triangle apex angle=60, 
      draw, 
      rotate=270, 
      fill=white!50, 
      minimum size =24pt] (TT) at (T){};
      \node[isosceles triangle, 
      isosceles triangle apex angle=60, 
      draw, 
      rotate=270, 
      fill=white!50, 
      minimum size =24pt] (RT) at (R){};
      \node[isosceles triangle, 
      isosceles triangle apex angle=60, 
      draw, 
      rotate=270, 
      fill=white!50, 
      minimum size =24pt] (ST) at (S){};

      \draw[thick] (TT.west)--(i);
      \draw[thick] (i)--(ST.east);
      \draw[thick] (j)--(ST.west);
      \draw[thick] (RT.east)--(j);
  
  
      \draw[circle, fill=white] (i) circle [radius=24pt];
      \draw[circle, fill=white] (j) circle [radius=24pt];
      \draw[circle, fill=black] (RT.east) circle [radius=2pt];
      \draw[circle, fill=black] (TT.east) circle [radius=2pt];
      \draw[circle, fill=black] (ST.east) circle [radius=2pt];

      \node[cross=13] at (i) {};
      \node[cross=13] at (j) {};

  
      \node at (T) {\scalebox{0.8}{$T_0$}};
      \node at (S) {\scalebox{0.8}{$U_0$}};
      \node at (i) {\scalebox{0.8}{$v_i$}};
      \node at (j) {\scalebox{0.8}{$v_j$}};
      \node at (R) {\scalebox{0.8}{$W$}};
    \end{tikzpicture}}}
  \]
  with $W=Br(Br(R;FS);\widetilde{FT_1})$ and $U_0=Br(S_0;FT_0)$, and $FT=FT_0\sqcup \widetilde{FT_1}$ for every
  possible such decomposition of $FT$ in (possibly empty) sub-forest. To conclude, one needs to remark that $V=W$. Indeed, in the definition of $W$, the forest $FS$ fall on $R$, then $\widetilde{FT_1}$ fall on the result; and in the definition of $V$, some trees of $\widetilde{FT_1}$ fall on some trees of $FS$ and the resulting forest fall on $R$. Those two operations give the same result.
\end{proof}

\begin{Definition}
  Let $\greg$ be the operad $(\G, \{\circ_i\})$. Let us note
  \[x=\vcenter{\hbox{\begin{tikzpicture}[scale=0.2]     
    \node (1) at (0, 0) {};
    \node (2) at (0, 3) {};


    \draw[thick] (1)--(2);


    \draw[circle, fill=white] (1) circle [radius=24pt];
    \draw[circle, fill=white] (2) circle [radius=24pt];
    
    \node at (1) {\scalebox{0.8}{$1$}};
    \node at (2) {\scalebox{0.8}{$2$}};

  \end{tikzpicture}}}\qquad 
  y=\vcenter{\hbox{\begin{tikzpicture}[scale=0.2]     
    \node (1) at (0, 0) {};
    \node (2) at (0, 3) {};


    \draw[thick] (1)--(2);


    \draw[circle, fill=white] (1) circle [radius=24pt];
    \draw[circle, fill=white] (2) circle [radius=24pt];
    
    \node at (2) {\scalebox{0.8}{$1$}};
    \node at (1) {\scalebox{0.8}{$2$}};

  \end{tikzpicture}}}\qquad
  g=\vcenter{\hbox{\begin{tikzpicture}[scale=0.2]     
    \node (1) at (-1.5, 3) {};
    \node (2) at (1.5, 3) {};
    \node (c) at (0, 0) {};


    \draw[thick] (c)--(1);
    \draw[thick] (c)--(2);


    \draw[circle, fill=black] (c) circle [radius=24pt];
    \draw[circle, fill=white] (1) circle [radius=24pt];
    \draw[circle, fill=white] (2) circle [radius=24pt];
    
    \node at (1) {\scalebox{0.8}{$1$}};
    \node at (2) {\scalebox{0.8}{$2$}};

  \end{tikzpicture}}}
  \]
  We know from Section \ref{seq:RT} that $x$ verify the pre-Lie relation. Let us introduce the following notation:
  \[
  x_n=
    \vcenter{\hbox{\begin{tikzpicture}[scale=0.2]     
    \node (T) at (0, 0) {};
    \node (S1) at (3, 3) {};
    \node (S2) at (0, 3) {};
    \node (S3) at (-3, 3) {};
      
    \draw[thick] (T)--(S1);
    \draw[thick] (T)--(S3);

    \node at (S2) {$\cdots$};

    \draw[circle, fill=white] (S1) circle [radius=24pt];
    \draw[circle, fill=white] (S3) circle [radius=24pt];
    \draw[circle, fill=white] (T) circle [radius=24pt];

  
    \node at (T) {\scalebox{0.8}{$1$}};
    \node at (S3) {\scalebox{0.8}{$2$}};
    \node at (S1) {\scalebox{0.8}{$n$}};
    \end{tikzpicture}}} 
    \qquad
  g_n=
    \vcenter{\hbox{\begin{tikzpicture}[scale=0.2]     
    \node (T) at (0, 0) {};
    \node (S1) at (3, 3) {};
    \node (S2) at (0, 3) {};
    \node (S3) at (-3, 3) {};
    
    \draw[thick] (T)--(S1);
    \draw[thick] (T)--(S3);
    \node at (S2) {$\cdots$};
    \draw[circle, fill=white] (S1) circle [radius=24pt];
    \draw[circle, fill=white] (S3) circle [radius=24pt];
    \draw[circle, fill=black] (T) circle [radius=24pt];
    
    \node at (S3) {\scalebox{0.8}{$1$}};
    \node at (S1) {\scalebox{0.8}{$n$}};
    \end{tikzpicture}}}
  \]
\end{Definition}

\begin{Example}
  Let us compute $x\circ_1 g$. Because of the renumbering that we have been ignoring, we have to compute
  $
  \vcenter{\hbox{\begin{tikzpicture}[scale=0.2]     
  \node (1) at (0, 0) {};
  \node (2) at (0, 3) {};


  \draw[thick] (1)--(2);


  \draw[circle, fill=white] (1) circle [radius=24pt];
  \draw[circle, fill=white] (2) circle [radius=24pt];
  
  \node at (1) {\scalebox{0.8}{$1$}};
  \node at (2) {\scalebox{0.8}{$3$}};

  \end{tikzpicture}}}\circ_1\vcenter{\hbox{\begin{tikzpicture}[scale=0.2]     
    \node (1) at (-1.5, 3) {};
    \node (2) at (1.5, 3) {};
    \node (c) at (0, 0) {};


    \draw[thick] (c)--(1);
    \draw[thick] (c)--(2);


    \draw[circle, fill=black] (c) circle [radius=24pt];
    \draw[circle, fill=white] (1) circle [radius=24pt];
    \draw[circle, fill=white] (2) circle [radius=24pt];
    
    \node at (1) {\scalebox{0.8}{$1$}};
    \node at (2) {\scalebox{0.8}{$2$}};

  \end{tikzpicture}}}
  $.
  \[
  \vcenter{\hbox{\begin{tikzpicture}[scale=0.2]     
    \node (1) at (0, 0) {};
    \node (2) at (0, 3) {};


    \draw[thick] (1)--(2);


    \draw[circle, fill=white] (1) circle [radius=24pt];
    \draw[circle, fill=white] (2) circle [radius=24pt];
    
    \node at (1) {\scalebox{0.8}{$1$}};
    \node at (2) {\scalebox{0.8}{$3$}};

  \end{tikzpicture}}}\circ_1\vcenter{\hbox{\begin{tikzpicture}[scale=0.2]     
    \node (1) at (-1.5, 3) {};
    \node (2) at (1.5, 3) {};
    \node (c) at (0, 0) {};


    \draw[thick] (c)--(1);
    \draw[thick] (c)--(2);


    \draw[circle, fill=black] (c) circle [radius=24pt];
    \draw[circle, fill=white] (1) circle [radius=24pt];
    \draw[circle, fill=white] (2) circle [radius=24pt];
    
    \node at (1) {\scalebox{0.8}{$1$}};
    \node at (2) {\scalebox{0.8}{$2$}};

  \end{tikzpicture}}}= 
  \vcenter{\hbox{\begin{tikzpicture}[scale=0.2]     
    \node (1) at (-1.5, 3) {};
    \node (2) at (1.5, 3) {};
    \node (3) at (-1.5, 6) {};
    \node (c) at (0, 0) {};


    \draw[thick] (c)--(1);
    \draw[thick] (c)--(2);
    \draw[thick] (1)--(3);


    \draw[circle, fill=black] (c) circle [radius=24pt];
    \draw[circle, fill=white] (1) circle [radius=24pt];
    \draw[circle, fill=white] (2) circle [radius=24pt];
    \draw[circle, fill=white] (3) circle [radius=24pt];
    
    \node at (1) {\scalebox{0.8}{$1$}};
    \node at (2) {\scalebox{0.8}{$2$}};
    \node at (3) {\scalebox{0.8}{$3$}};

  \end{tikzpicture}}}+
  \vcenter{\hbox{\begin{tikzpicture}[scale=0.2]     
    \node (1) at (-1.5, 3) {};
    \node (2) at (1.5, 3) {};
    \node (3) at (1.5, 6) {};
    \node (c) at (0, 0) {};


    \draw[thick] (c)--(1);
    \draw[thick] (c)--(2);
    \draw[thick] (2)--(3);


    \draw[circle, fill=black] (c) circle [radius=24pt];
    \draw[circle, fill=white] (1) circle [radius=24pt];
    \draw[circle, fill=white] (2) circle [radius=24pt];
    \draw[circle, fill=white] (3) circle [radius=24pt];
    
    \node at (1) {\scalebox{0.8}{$1$}};
    \node at (2) {\scalebox{0.8}{$2$}};
    \node at (3) {\scalebox{0.8}{$3$}};

  \end{tikzpicture}}}+
  \vcenter{\hbox{\begin{tikzpicture}[scale=0.2]     
    \node (1) at (-2, 3) {};
    \node (2) at (0, 3) {};
    \node (3) at (2, 3) {};
    \node (c) at (0, 0) {};


    \draw[thick] (c)--(1);
    \draw[thick] (c)--(2);
    \draw[thick] (c)--(3);


    \draw[circle, fill=black] (c) circle [radius=24pt];
    \draw[circle, fill=white] (1) circle [radius=24pt];
    \draw[circle, fill=white] (2) circle [radius=24pt];
    \draw[circle, fill=white] (3) circle [radius=24pt];
    
    \node at (1) {\scalebox{0.8}{$1$}};
    \node at (2) {\scalebox{0.8}{$2$}};
    \node at (3) {\scalebox{0.8}{$3$}};

  \end{tikzpicture}}}
  \]
  Hence, we have $x\circ_1 g- (g\circ_1 x).(2\;3) - g\circ_2 x = \vcenter{\hbox{\begin{tikzpicture}[scale=0.2]     
    \node (1) at (-2, 3) {};
    \node (2) at (0, 3) {};
    \node (3) at (2, 3) {};
    \node (c) at (0, 0) {};


    \draw[thick] (c)--(1);
    \draw[thick] (c)--(2);
    \draw[thick] (c)--(3);


    \draw[circle, fill=black] (c) circle [radius=24pt];
    \draw[circle, fill=white] (1) circle [radius=24pt];
    \draw[circle, fill=white] (2) circle [radius=24pt];
    \draw[circle, fill=white] (3) circle [radius=24pt];
    
    \node at (1) {\scalebox{0.8}{$1$}};
    \node at (2) {\scalebox{0.8}{$2$}};
    \node at (3) {\scalebox{0.8}{$3$}};

  \end{tikzpicture}}}$. One may recognize the Leibniz rule in the left-hand side of the equation. Moreover, since the right-hand side is symmetric, we have $x\circ_1 g- (g\circ_1 x).(2\;3) - g\circ_2 x = (x\circ_1 g- (g\circ_1 x).(2\;3) - g\circ_2 x).(2\;3)$. Let us call this relation the Greg relation.
\end{Example}

\begin{Remark}
  The element $g_3$ encodes the failure to verify the Leibniz's rule in the operad $\greg$, the same as the element $x_3$ encodes the failure to verify the associativity relation.
\end{Remark}

\begin{Proposition}
  The operad $\greg$ is binary.
\end{Proposition}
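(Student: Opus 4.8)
The plan is to prove that $\greg$ is generated by its arity-two component $\greg(2)$, which is spanned by $x$, $y$ and $g$; write $\mathcal B$ for the suboperad generated by these three elements. Since a suboperad is stable under the symmetric group action we have $y=x.(1\,2)$ and $g.(1\,2)=g$, so in fact $\mathcal B=\langle x,g\rangle$, and as $\mathcal B\subseteq\greg$ trivially, it suffices to show that every rooted Greg tree lies in $\mathcal B$. I would argue by induction on the arity $n$, the case $n=1$ being the unit. For the inductive step I split on the colour of the root $r$ of a tree $T$ of arity $n\ge 2$ and reduce the whole question to understanding the two families of corollas $x_m$ and $g_m$.

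If $r$ is white with children subtrees $T_1,\dots,T_m$ (with $m\ge 1$), then $T$ is obtained from the white corolla $x_m$ by inserting each $T_i$ at the $i$-th leaf; since every insertion is at a leaf its child-forest is empty, the symmetric braces are trivial, and this total composition reconstructs $T$ exactly, with no correction terms. Each $T_i$ has arity $<n$, hence lies in $\mathcal B$ by induction, and $x_m$ lies in $\mathcal B$ because the weight-zero trees form the suboperad $\PL$, which is generated by $x$. Thus $T\in\mathcal B$. If instead $r$ is black it has $m\ge 2$ children $T_1,\dots,T_m$, and the identical grafting shows that $T$ is the black corolla $g_m$ with $T_i$ inserted at the $i$-th leaf, so $T\in\mathcal B$ provided $g_m\in\mathcal B$. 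Everything therefore reduces to the single point that the black corollas $g_m$ are generated by $x$ and $g$.

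This last point I would prove by a separate induction on $m$, generalising the computation of $g_3$ in the Example. The base case is $g_2=g$. For the step, recall from Definition \ref{Definition:inf} that $x\circ_1 S=S\star(\text{a new white leaf})$, the fall product attaching one leaf at every vertex of $S$. Taking $S=g_{m-1}$, whose vertices are its black root and its $m-1$ white leaves, attaching the new leaf at the black root yields exactly $g_m$, while attaching it at the white leaf $i$ yields the tree $E_i$ whose black root carries the $m-1$ leaves of $g_{m-1}$ with leaf $i$ prolonged into a $2$-chain. Hence
\[
x\circ_1 g_{m-1}=g_m+\sum_{i=1}^{m-1}E_i .
\]
Each $E_i$ equals $g_{m-1}\circ_i x$ (inserting $x$ at the white leaf $i$, again with empty child-forest), so $E_i\in\mathcal B$ by the inductive hypothesis $g_{m-1}\in\mathcal B$ together with $x\in\mathcal B$. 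Solving the displayed identity for $g_m$ gives $g_m\in\mathcal B$, completing the induction; for $m=3$ this recovers precisely the Greg relation $g_3=x\circ_1 g-(g\circ_1 x).(2\,3)-g\circ_2 x$ of the Example.

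The main obstacle is exactly this black-corolla lemma: composing binary generators never produces a black corolla cleanly, only as the leading term of a fall product, so the argument is intrinsically triangular and one must verify that every correction term $E_i$ is built from strictly smaller black corollas. Once this triangularity is established the two root cases are routine tree surgery, and the only external input is the binarity of $\PL$ used for the white corollas, which could equally be reproved by the same leaf-attaching induction applied to $x\circ_1 x_{m-1}=x_m+(\text{lower terms})$.
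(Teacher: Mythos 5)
Your proof is correct and follows essentially the same route as the paper: an induction on arity whose key step is the triangular identity $x\circ_1 g_{m-1}=g_m+\sum_i g_{m-1}\circ_i x$, which recovers the corollas $g_m$ (and $x_m$) inside the suboperad generated by $x$, $y$, $g$, combined with the observation that leaf compositions involve trivial braces and so assemble any rooted Greg tree from corollas. Your decomposition plugs subtrees into the leaves of the root corolla while the paper plugs corollas into leaves of smaller trees, but this is the same argument read in the opposite direction, and your use of the binarity of $\PL$ for the white corollas is an inessential shortcut you correctly note can be replaced by the analogous computation $x\circ_1 x_{m-1}$.
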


\begin{proof}
  Let $\P(x, y, g)$ be the suboperad of $\greg$ generated by $x$, $y$ and $g$. We have to show that $\P(x, y, g)=\greg$. Let us prove it by induction on the arity.\\
  \emph{Base case:} By definition $\P(x, y, g)(2)=\greg(2)$.\\
  \emph{Induction step:} Let $n\geq 2$ and suppose that $\P(x, y, g)(k)=\greg(k)$ for all $k\leq n$. We have to show that $\P(x, y, g)(n+1)=\greg(n+1)$.\\
  Computing $x\circ_1x_n$ and $x\circ_1c_n$ shows that $x_{n+1}\in \P(x, y, g)(n+1)$ and $c_{n+1}\in \P(x, y, g)(n+1)$. Since we can obtain any rooted Greg trees by inductively composing corollas in the leaves of smaller trees, we have $\greg(n+1)= \P(x, y, g)(n+1)$.\\
  Hence by induction, $\P(x, y, g)=\greg$.
\end{proof}

We want to prove that $\greg$ is quadratic to get a quadratic presentation. In order to do so, we will introduce a quadratic operad $\greg'$, show that $\greg'$ is Koszul and use the information given on its dimensions of components to show that $\greg'$ is isomorphic to $\greg$.

\begin{Definition}
  Let $\greg'$ be the quotient of the free operad generated by $\tx $ and $\tg $ such that $\tx $ has no symmetry and $\tg.(1\;2) =\tg $, by the following relations:
  \begin{itemize}
    \item $(\tx  \circ_1\tx  -\tx  \circ_2\tx ) - (\tx  \circ_1\tx  -\tx  \circ_2\tx ).(2\;3)$
    \item $(\tx \circ_1 \tg - (\tg \circ_1 \tx ).(2\;3) - \tg \circ_2 \tx ) - 
    (\tx \circ_1 \tg - (\tg \circ_1 \tx ).(2\;3) - \tg \circ_2 \tx ).(2\;3)$
  \end{itemize}
  The first relation is the pre-Lie relation and the second one is the Greg relation. Since $\greg$ is binary and those relations are satisfied in $\greg$, we have a surjective morphism of operads $\greg'\twoheadrightarrow \greg$.
\end{Definition}

\begin{Remark}
  From this definition and using the formalism of shuffle operads, it would already be possible to prove that $\greg'$ is Koszul. However, the dimensions of components of its Koszul dual are much simpler, hence we will compute and work with the Koszul dual of $\greg'$.
\end{Remark}

\begin{Definition}
  Let $(\greg')^!$ be the quotient of the free operad generated by $x_*$ and $g_*$ such that $x_*$ has no symmetry and $g_*.(1\;2)=-g_*$, by the following relations:
  $$x_*\circ_1x_*-x_*\circ_2x_* \qquad ; \qquad x_*\circ_1x_*-(x_*\circ_1x_*).(2\;3)$$
  $$x_*\circ_1g_*-g_*\circ_2x_* \qquad ; \qquad x_*\circ_1g_*-(g_*\circ_1x_*).(2\;3)$$
  $$x_*\circ_1g_*+(x_*\circ_1g_*).(1\;2\;3)+(x_*\circ_1g_*).(1\;3\;2)$$
  $$x_*\circ_2g_*  \qquad ; \qquad g_*\circ_1g_*$$

  This is the Koszul dual of $\greg'$. The explicit method to compute a presentation of the Koszul dual of a binary operad is detailed in \cite[Subsection 7.6]{AlgOp}. The two first relation are associativity and permutativity. This operad is not a set operad because of the last three relations.
\end{Definition}

Let us give an example of $(\greg')^!$-algebra that allows us to show that $\dim((\greg')^!(4))\geq 7$.\\

\begin{Definition}\label{Definition:walg}
  Let $\chi$ be a finite alphabet and $\mathbf{W}(\chi)$ be the span of finite words on $\chi$ with the following extra decorations: either one letter is pointed with a dot, or there is an arrow from one letter to another.
  Let $W(\chi)$ be the quotient of $\mathbf{W}(\chi)$ by the following relations, letters commute with each other (the dot and the arrow follow the letter), reverting the arrow changes the sign and $$\overset{\curvearrowright}{ab}cv=\overset{\curvearrowright}{cb}av+\overset{\curvearrowright}{ac}bv$$ for any $a, b, c\in \chi$ and $v$ a finite word. Because the letters commute, we can write the elements of $W(\chi)$ with the pointed letter (or arrowed letters) at the start.\\
  Let the $\ccircled{x}$ and $\ccircled{g}$ products on $W(\chi)$ defined by:
  \begin{itemize}
    \item $\dot{a}v\ccircled{x}\dot{b}w=\dot{a}vbw$
    \item $\overset{\curvearrowright}{ab}v\ccircled{x}\dot{c}w=\overset{\curvearrowright}{ab}vcw$
    \item $\dot{a}v\ccircled{g}\dot{b}w=\overset{\curvearrowright}{ab}vw$
  \end{itemize}
  All other cases give $0$.
\end{Definition} 

\begin{Proposition}
  The algebra $(W(\chi), \ccircled{x}, \ccircled{g})$ is a $(\greg')^!$-algebra generated by $\chi$.
\end{Proposition}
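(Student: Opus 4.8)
The plan is to verify directly that the two defining products $\ccircled{x}$ and $\ccircled{g}$ on $W(\chi)$ satisfy all the relations of the Koszul dual $(\greg')^!$, and then to check that $\chi$ generates the whole space. Concretely, I would first establish a normal form for elements of $W(\chi)$: since letters commute and the arrow/dot can be floated to the front, every nonzero element is represented either as $\dot{a}v$ (a dotted word) or as $\overset{\curvearrowright}{ab}v$ (an arrowed word), where $v$ is an unordered word, subject only to the arrow-reversal sign rule and the three-term arrow relation $\overset{\curvearrowright}{ab}cv=\overset{\curvearrowright}{cb}av+\overset{\curvearrowright}{ac}bv$. Having this normal form in hand makes every relation check a finite symbol manipulation.

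The core of the proof is to run through the seven relations one at a time, evaluating both sides on generic elements in normal form. For the associativity relation $x_*\circ_1x_*-x_*\circ_2x_*$ I would compute $(\dot{a}u\ccircled{x}\dot{b}v)\ccircled{x}\dot{c}w$ and $\dot{a}u\ccircled{x}(\dot{b}v\ccircled{x}\dot{c}w)$ and observe both give $\dot{a}ubvcw$, so $\ccircled{x}$ is associative; permutativity $x_*\circ_1x_*-(x_*\circ_1x_*).(2\;3)$ follows because letters commute, so $\dot{a}ubvcw=\dot{a}ucwbv$. The mixed relations $x_*\circ_1g_*-g_*\circ_2x_*$ and $x_*\circ_1g_*-(g_*\circ_1x_*).(2\;3)$ amount to checking that grafting $\ccircled{g}$ in various legs and then applying $\ccircled{x}$ produces the same arrowed word $\overset{\curvearrowright}{ab}(\dots)$ up to the commutativity of letters. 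The relations $x_*\circ_2g_*$ and $g_*\circ_1g_*$ should be immediate from the ``all other cases give $0$'' clause, since feeding an arrowed word into the $\ccircled{g}$ slot or applying $\ccircled{g}$ to an arrowed input is undefined and hence zero.

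The step I expect to be the main obstacle is the cyclic relation
\[
x_*\circ_1g_*+(x_*\circ_1g_*).(1\;2\;3)+(x_*\circ_1g_*).(1\;3\;2).
\]
This is exactly where the nontrivial three-term arrow relation of $W(\chi)$ must be invoked: evaluating $x_*\circ_1g_*$ on $\dot{a},\dot{b},\dot{c}$ produces an arrowed word of the shape $\overset{\curvearrowright}{(\,)(\,)}(\,)$, and summing over the three cyclic permutations of the inputs should collapse to zero precisely because of the identity $\overset{\curvearrowright}{ab}c=\overset{\curvearrowright}{cb}a+\overset{\curvearrowright}{ac}b$ together with the arrow-reversal sign convention. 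I would therefore carefully track which letter carries the arrow tail and head after each cyclic shift, rewrite all three terms into a common arrowed normal form, and confirm the signed sum vanishes. This is the only place where the quotient relations defining $W(\chi)$ are genuinely used, so it is both the crux and the most error-prone computation.

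Finally, to see that $\chi$ generates $(W(\chi),\ccircled{x},\ccircled{g})$ as a $(\greg')^!$-algebra, I would argue by induction on word length: a single letter $a$ corresponds to the dotted generator $\dot{a}$, the product $\dot{a}v\ccircled{x}\dot{b}$ appends a letter, and $\dot{a}\ccircled{g}\dot{b}$ creates the arrow, so every normal-form element is reachable from the letters of $\chi$ by iterated $\ccircled{x}$ and $\ccircled{g}$. Since $\dim W(\chi)$ in arity $4$ (with four generating letters) can then be counted from the normal form and seen to be at least $7$, this establishes the claimed lower bound $\dim((\greg')^!(4))\geq 7$.
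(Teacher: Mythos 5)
Your proposal is correct and takes essentially the same approach as the paper: show generation by assembling dotted words with $\ccircled{x}$ and arrowed words via $\ccircled{g}$ followed by $\ccircled{x}$, and verify the seven relations of $(\greg')^!$ directly on the spanning set of dotted and arrowed words, a verification the paper compresses to ``the $7$ relations are easily checked.'' Your pinpointing of the cyclic relation $x_*\circ_1g_*+(x_*\circ_1g_*).(1\;2\;3)+(x_*\circ_1g_*).(1\;3\;2)$ as the only check that genuinely uses the three-term arrow relation, with the arrow-reversal sign rule also supplying the skew-symmetry of $\ccircled{g}$ matching $g_*.(1\;2)=-g_*$, is accurate.
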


\begin{proof}
  Indeed, $\dot{a}v=\dot{a}\ccircled{x}w$ with $w$ the word $v$ with a dot on a letter (let us say the first one for example) and 
  $$\overset{\curvearrowright}{ab}v=(\dot{a}\ccircled{g}\dot{b})\ccircled{x}w$$ 
  so $(W(\chi), \ccircled{x}, \ccircled{g})$ it is generated by $\chi$. The product $\ccircled{g}$ is skew-symmetric, indeed $$\dot{a}\ccircled{g}\dot{b}=\overset{\curvearrowright}{ab}=\overset{\curvearrowleft}{ba}=-\overset{\curvearrowright}{ba}=-\dot{b}\ccircled{g}\dot{a}$$ The $7$ relations of $(\greg')^!$ are easily checked.
\end{proof}

\begin{Proposition}
  We have $\dim((\greg')^!(4))\geq 7$.
\end{Proposition}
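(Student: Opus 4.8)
The plan is to use the $(\greg')^!$-algebra $W(\chi)$ of Definition \ref{Definition:walg} as a test algebra. Since $W(\chi)$ is generated by $\chi$, evaluating arity-$4$ operations on four distinct generators produces, for the alphabet $\chi=\{a,b,c,d\}$, a linear map $\rho\colon(\greg')^!(4)\to W(\chi)$ whose image lies in the multilinear component $W^{\mathrm{ml}}$ spanned by the decorated words that use each of $a,b,c,d$ exactly once. Any lower bound on $\dim\rho\big((\greg')^!(4)\big)$ is a lower bound on $\dim(\greg')^!(4)$, so it suffices to exhibit seven operations whose images under $\rho$ are linearly independent in $W^{\mathrm{ml}}$.

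First I would analyse $W^{\mathrm{ml}}$, which splits into a \emph{pointed} part and an \emph{arrowed} part (no relation of Definition \ref{Definition:walg} mixes the two). The pointed part is spanned by $\dot{a}bcd,\dot{b}acd,\dot{c}abd,\dot{d}abc$, the remaining letters commuting, and is visibly $4$-dimensional. The arrowed part is spanned by the words $\overset{\curvearrowright}{xy}zw$ carrying a single arrow; by antisymmetry $\overset{\curvearrowright}{xy}=-\overset{\curvearrowright}{yx}$ it is generated by the six elements indexed by unordered pairs, and the relation $\overset{\curvearrowright}{ab}cv=\overset{\curvearrowright}{cb}av+\overset{\curvearrowright}{ac}bv$ yields, for each of the four triples of letters (the fourth sitting as a spectator in the tail), a coboundary-type relation $\overset{\curvearrowright}{ij}+\overset{\curvearrowright}{jk}+\overset{\curvearrowright}{ki}=0$. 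These four relations have rank $3$, so the arrowed part is $3$-dimensional, with $\overset{\curvearrowright}{ab}cd,\overset{\curvearrowright}{ac}bd,\overset{\curvearrowright}{ad}bc$ a basis.

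Next I would realise all seven basis vectors as values of $\rho$. The four pointed words are the images of the iterated products $\dot{a}\ccircled{x}\dot{b}\ccircled{x}\dot{c}\ccircled{x}\dot{d}$ and its reorderings putting each letter first; by associativity and permutativity of $\ccircled{x}$ the value depends only on the pointed letter. The three arrowed basis vectors are the images of $(\dot{a}\ccircled{g}\dot{b})\ccircled{x}\dot{c}\ccircled{x}\dot{d}$, $(\dot{a}\ccircled{g}\dot{c})\ccircled{x}\dot{b}\ccircled{x}\dot{d}$, and $(\dot{a}\ccircled{g}\dot{d})\ccircled{x}\dot{b}\ccircled{x}\dot{c}$. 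Each of the seven expressions is the evaluation of a genuine arity-$4$ operation built from $x_*$ and $g_*$; since the four pointed words and the three arrowed words are linearly independent in $W^{\mathrm{ml}}$, the seven images are independent, whence $\dim(\greg')^!(4)\geq 7$.

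I expect the one genuinely delicate point to be the dimension count for the arrowed part: one must verify that applying the relation of Definition \ref{Definition:walg} to all triples gives precisely a rank-$3$ system rather than collapsing the arrowed words further, and equivalently that $\overset{\curvearrowright}{ab}cd,\overset{\curvearrowright}{ac}bd,\overset{\curvearrowright}{ad}bc$ remain independent modulo these relations. Everything else—namely that the listed expressions evaluate to the asserted words—is a routine unwinding of the product formulas.
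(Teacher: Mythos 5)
Your proposal is correct and takes essentially the same route as the paper: both use the algebra $W(\{a,b,c,d\})$ as a test algebra and show its multilinear part in arity $4$ is $7$-dimensional ($4$ pointed words plus $3$ arrowed words), which bounds $\dim((\greg')^!(4))$ from below. The only difference is in the one delicate point you correctly flag: you settle the $3$-dimensionality of the arrowed part by computing the rank of the four triangle relations (which is indeed $3$, since with $R_{ijk}=\overset{\curvearrowright}{ij}+\overset{\curvearrowright}{jk}+\overset{\curvearrowright}{ki}$ one has $R_{bcd}=R_{abc}-R_{abd}+R_{acd}$ while any three of the $R$'s are visibly independent), whereas the paper verifies confluence of the rewriting rule $\overset{\curvearrowright}{\beta\gamma}\alpha\mapsto \overset{\curvearrowright}{\alpha\gamma}\beta-\overset{\curvearrowright}{\alpha\beta}\gamma$ --- two equivalent verifications of the same fact.
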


\begin{proof}
  Let $A=W(\{a, b, c, d\})$. Let us compute the dimension of $\Mult(A(4))$ the multilinear part of $A(4)$. Let us write words of $\Mult(A(4))$ such that the arrow always starts from $a$ and go to the second letter, and aside from that, the letters are in the lexicographic order.\\
  The rewriting rule $\overset{\curvearrowright}{\beta\gamma}\alpha\mapsto \overset{\curvearrowright}{\alpha\gamma}\beta-\overset{\curvearrowright}{\alpha\beta}\gamma$ is confluent. Indeed: 
  $$\overset{\curvearrowright}{cd}ab=\overset{\curvearrowright}{bd}ac-\overset{\curvearrowright}{bc}ad=  (\overset{\curvearrowright}{ad}bc-\overset{\curvearrowright}{ab}cd)-(\overset{\curvearrowright}{ac}bd-\overset{\curvearrowright}{ab}cd)=\overset{\curvearrowright}{ad}bc-\overset{\curvearrowright}{ac}bd$$
  Hence, $\Mult(A(4))$ is spanned by the words $\dot{a}bcd$, $a\dot{b}cd$, $ab\dot{c}d$, $abc\dot{d}$, $\overset{\curvearrowright}{ab}cd$, $\overset{\curvearrowright}{ac}bd$ and $\overset{\curvearrowright}{ad}bc$. Since $A$ is a $(\greg')^!$-algebra on $4$ generators, $\dim((\greg')^!(4))\geq\dim(\Mult(A(4)))=7$.
\end{proof}

\begin{Remark}
  The algebra $(W(\chi), \ccircled{x}, \ccircled{g})$ is in fact the free $(\greg')^!$-algebra generated by $\chi$. 
\end{Remark}

Let us use the formalism of shuffle operads to write down a Gröbner basis of $(\greg')^!$. Operadic monomials will be written as shuffle trees. However, one needs not to confuse them with Greg trees. For further information on shuffle operads, see \cite{AlgComp}. A quick introduction on shuffle trees has been given in the first section see \ref{rec:sh}. Writing the relations of $(\greg')^!$ using shuffle trees is a good exercise to familiarize ourselves with shuffle trees, and to be careful not to confuse them with the species of rooted trees or rooted Greg trees. Since the actions of the symmetric groups are disposed of when working with shuffle operads, let us note $y_*=x_*.(1\;2)$. We get Figure \ref{fg:StRel1}.
\paragraph{}
From those computations, the only missing ingredient to get a Gröbner basis is a monomial order. We will consider the three following orders to get terminating rewriting systems. Those orders are defined in \cite{AlgComp} for any presentation of shuffle operads.
\begin{itemize}
  \item The degree-lexicographic permutation order with $x_*>y_*>g_*$ gives Figure \ref{fg:StGB1}.
  \item The permutation reverse-degree-lexicographic order with $g_*>y_*>x_*$ gives Figure \ref{fg:StGB2}.
  \item The reverse-degree-lexicographic permutation order with $x_*>y_*>g_*$ gives Figure \ref{fg:StGB3}.
\end{itemize}

\begin{Proposition}\label{Proposition:GB}
  The rewriting systems displayed in  Figure \ref{fg:StGB1}, Figure \ref{fg:StGB2} and Figure \ref{fg:StGB3} are confluent. They are in fact Gröbner bases.
\end{Proposition}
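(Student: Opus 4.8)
The plan is to apply the operadic Buchberger criterion, that is, the Diamond Lemma for shuffle operads \cite[Chapter 3]{AlgComp}: a set of relations, read as rewriting rules sending each leading monomial to the negative of its remaining terms, is a Gröbner basis if and only if the induced rewriting system terminates and every S-polynomial obtained from an overlap of two leading monomials reduces to zero. Termination is immediate for all three systems, since each of the three orders in the statement is a genuine admissible monomial order on shuffle trees: every rewriting step strictly lowers the leading monomial in a well-order compatible with the partial compositions, so no infinite chain of rewritings can occur. By Newman's lemma it then suffices to establish local confluence, which in turn reduces to the confluence of the finitely many critical pairs.

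First I would enumerate the overlaps. Since $(\greg')^!$ is generated in arity $2$ and its defining relations (Figure \ref{fg:StRel1}) are quadratic, the primary overlaps of leading monomials occur in arity $4$, among shuffle trees with three internal vertices. For each such ambiguity I would form the S-polynomial, rewrite its two branches to a normal form using the rules of the relevant figure, and check that the two results coincide; should Buchberger completion force additional rules, these are precisely the extra entries recorded in the figure, and I would then iterate, checking the overlaps they in turn create until no new leading term appears. The delicate point is that $(\greg')^!$ is not a set operad: the relations $x_*\circ_1g_*+(x_*\circ_1g_*).(1\;2\;3)+(x_*\circ_1g_*).(1\;3\;2)$, $x_*\circ_2g_*$ and $g_*\circ_1g_*$, together with the skew-symmetry $g_*.(1\;2)=-g_*$, make each S-polynomial an honest linear combination of tree monomials, so every reduction is a short linear-algebra computation in which the signs carried by $g_*$ must be propagated correctly through both branches.

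The main obstacle is exactly this bookkeeping: the list of arity-$4$ overlaps is sizeable, and tracking both the leaf-labelings forced by the local increasing condition and the $g_*$-signs across every branch is where errors creep in; this is why the completed systems are recorded in full in Figures \ref{fg:StGB1}, \ref{fg:StGB2} and \ref{fg:StGB3} rather than inline, and why the three orders are handled separately. I would organize the verification by the shape of the overlapping leading monomial: first the overlaps of two copies of $x_*$, which are the classical associative-permutative ones, then the mixed $x_*$--$g_*$ overlaps, and finally the single $g_*$--$g_*$ overlap, confirming in each case that completion produces no leading term absent from the displayed figure. Once all critical pairs are confluent, the Diamond Lemma yields confluence of each terminating system, and a terminating confluent rewriting system arising from a presentation is, by definition, a Gröbner basis, which proves the proposition. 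As a consistency check, the number of normal (leading-term-free) monomials these bases produce in arity $4$ equals $7$, matching the value $\dim((\greg')^!(4))\ge 7$ established above, confirming that the reduction has not collapsed too much.
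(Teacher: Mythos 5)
Your approach is correct, but it is the laborious one of the two routes the paper itself names, and the paper deliberately avoids it. The paper's proof observes that there are $294$ critical monomials to check for Figure \ref{fg:StGB1} alone, and instead argues by counting: each of the three rewriting systems has exactly $7$ normal forms in arity $4$; since normal forms always span the quotient, $\dim((\greg')^!(4))\leq 7$, and the lower bound $\dim((\greg')^!(4))\geq 7$ coming from the algebra $W(\chi)$ forces the normal forms to be linearly independent. A critical pair that failed to be confluent would produce a nonzero reduced S-polynomial, i.e.\ a vanishing nontrivial linear combination of arity-$4$ normal forms, contradicting that independence; and since the rules are quadratic in a binary operad under a genuine monomial order, all overlaps live in arity $4$, so this settles confluence everywhere. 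In other words, what you relegate to a closing ``consistency check'' is---once you add the one missing observation that normal forms span, so the count gives an upper bound and not merely a match---the paper's entire proof, and your $294$-case Buchberger verification (which you outline but do not carry out, and whose $g_*$-sign bookkeeping is exactly where errors creep in) becomes unnecessary. Your route does have one genuine virtue the paper's lacks: it is self-contained, needing neither the $W(\chi)$ construction nor the lower bound on $\dim((\greg')^!(4))$, whereas the counting argument buys brevity and robustness at the price of that auxiliary input.
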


\begin{proof}
  To prove this fact one has two choices either checking the confluence of the critical monomials ($294$ cases to check for Figure \ref{fg:StGB1}), or remarking that there are $7$ normal forms in arity $4$ for each rewriting system and because $\dim((\greg')^!(4))\geq 7$, no new relation can appear. Since we have a monomial order and the rewriting rules are quadratic in a binary operad, checking arity $4$ is enough.
\end{proof}

\begin{Proposition}
  We have that $\dim((\greg')^!(n))=2n-1$ for all $n\geq 1$, hence its exponential generating series is $(2t-1)\exp(t)+1$.
\end{Proposition}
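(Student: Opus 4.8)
The generating series is a routine consequence of the dimension formula, so I would prove the latter first. Using $\sum_{n\geq 1} n\,t^n/n! = t\exp(t)$ together with $\sum_{n\geq 1} t^n/n! = \exp(t)-1$, one gets $\sum_{n\geq 1}(2n-1)t^n/n! = 2t\exp(t) - (\exp(t)-1) = (2t-1)\exp(t)+1$. Thus everything reduces to proving $\dim((\greg')^!(n)) = 2n-1$, which I would do by matching a lower and an upper bound.

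For the lower bound I would generalize the arity-$4$ computation to arbitrary $n$ using the algebra $W(\chi)$. Taking $A = W(\{1,\dots,n\})$, the evaluation map $(\greg')^!(n) \to \Mult(A(n))$ is surjective because $A$ is a $(\greg')^!$-algebra generated by its $n$ letters; hence $\dim((\greg')^!(n)) \geq \dim\Mult(A(n))$. To compute the right-hand side I would use that, since letters commute, a multilinear dotted word is determined by which of the $n$ letters carries the dot, giving $n$ basis elements, while the confluent rewriting rule $\overset{\curvearrowright}{\beta\gamma}\alpha \mapsto \overset{\curvearrowright}{\alpha\gamma}\beta - \overset{\curvearrowright}{\alpha\beta}\gamma$ lets me normalize every arrowed word so that the arrow starts at the least letter, leaving the $n-1$ elements indexed by the target letter. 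This yields $\dim\Mult(A(n)) = 2n-1$.

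For the upper bound I would invoke the Gröbner basis of Proposition \ref{Proposition:GB}: its normal forms constitute a basis of $(\greg')^!$, so the dimension equals the number of normal-form tree monomials in arity $n$. I would then read off from the leading terms that the monomials built only from $x_*$ (equivalently $x_*$ and $y_*$) are governed by the associativity and permutativity relations, that is, by the operad $\Perm$, and so contribute exactly $n$ normal forms, whereas the relations $g_*\circ_1 g_* = 0$, $x_*\circ_2 g_* = 0$ and the two $x_*$--$g_*$ rewriting rules force any surviving $g_*$ to occur exactly once and in a determined position, contributing the remaining $n-1$ normal forms. Hence there are at most $2n-1$ normal forms, and combined with the lower bound this gives equality. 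In fact the Gröbner basis alone pins down the dimension; the lower bound via $W(\chi)$ mainly serves as a cross-check that lightens the combinatorial bookkeeping.

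The main obstacle is precisely the upper-bound step: turning the explicit Gröbner basis into a clean combinatorial description of the normal forms that is valid in every arity, rather than only in arity $4$. Concretely, I expect the work to lie in verifying that no normal form can contain two or more occurrences of $g_*$, and that the normal forms containing a single $g_*$ are exactly parametrized by the $n-1$ remaining leaves, so that the count splits as $n + (n-1)$ uniformly in $n$.
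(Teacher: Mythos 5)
Your proposal is correct and takes essentially the same route as the paper: the paper's proof likewise just counts the normal forms of the Gröbner basis of Proposition~\ref{Proposition:GB} (Figure~\ref{fg:StGB1}), describing them as right combs with at most one $g_*$, all $x_*$ above the $g_*$ and the $y_*$, and all $y_*$ below, which gives exactly your split $n+(n-1)=2n-1$ (the $g_*$-free combs being the $\Perm$ part). Your generalized lower bound via $W(\{1,\dots,n\})$ is valid but, as you yourself observe, redundant once the Gröbner basis is in hand --- normal forms form a basis, so the count is an equality --- and the ``main obstacle'' you flag is resolved by precisely the comb description above, read off from the leading terms.
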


\begin{proof}
  It suffices to count the normal forms of a rewriting system for example Figure \ref{fg:StGB1}. Let $n\geq 2$ and count the number of normal forms in arity $n$. Those are right combs with at most one $g_*$, with all the $x_*$ above the $g_*$ and the $y_*$, and all the $y_*$ below the $g_*$ and the $x_*$. Hence, the normal forms are determined by the number of occurrences $x_*$ and $g_*$, and have either zero or one occurrence $g_*$. If there is no $g_*$, then one can have from $0$ to $n-1$ occurrences of $x_*$. If there is one $g_*$, then one can have from $0$ to $n-2$ occurrences of $x_*$. Hence, the number of normal forms in arity $n$ is $2n-1$.
\end{proof}

\begin{Theorem}
  The operad $\greg'$ is Koszul.
\end{Theorem}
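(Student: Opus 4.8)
The plan is to deduce the Koszulness of $\greg'$ from the Gröbner basis computation already carried out for its Koszul dual, together with the fact that an operad is Koszul if and only if its Koszul dual is. The whole point of introducing $(\greg')^!$ and computing its Gröbner bases in Proposition \ref{Proposition:GB} was precisely to have at hand an operad with a manageable presentation, and with simple component dimensions $2n-1$, on which the Gröbner machinery applies cleanly; so the strategy is to work on the dual side and then transfer back.

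First I would record that each of the confluent rewriting systems of Proposition \ref{Proposition:GB} is a \emph{quadratic} Gröbner basis. The defining relations of $(\greg')^!$ are all quadratic, and the second proof given for Proposition \ref{Proposition:GB} shows that no higher-arity elements need to be adjoined to the basis: there are exactly $7$ normal forms in arity $4$, and $\dim\big((\greg')^!(4)\big)\geq 7$ forces confluence to hold already at the quadratic level, so that in a binary operad checking arity $4$ suffices. Consequently the initial (leading-term) suboperad of $(\greg')^!$ is generated by quadratic monomials.

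Next I would invoke the standard criterion that a quadratic shuffle operad admitting a quadratic Gröbner basis is Koszul, together with the fact that a symmetric operad is Koszul precisely when its underlying shuffle operad is, valid here since the base field has characteristic $0$ (see \cite[Chapter 8]{AlgOp} and \cite{AlgComp}). This gives that $(\greg')^!$ is Koszul. Finally, since $\greg'$ is a finitely generated quadratic operad, so that $\big((\greg')^!\big)^!\cong\greg'$, and since an operad is Koszul if and only if its Koszul dual is Koszul, the Koszulness of $(\greg')^!$ transfers back to $\greg'$, which is what we want.

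The main obstacle is not in this last deduction, which is essentially formal once the Gröbner basis is in place, but rather lay in the decision to work on the dual side at all: attempting to compute a Gröbner basis for $\greg'$ directly, whose relations are the pre-Lie and Greg relations, is considerably more delicate, which is exactly why the detour through $(\greg')^!$ is worthwhile. The only points requiring care in the write-up are to confirm that the Gröbner basis is genuinely quadratic, with no auxiliary higher-arity relations hidden in the confluence check, and that the passage between the shuffle and symmetric settings is legitimate — both of which are guaranteed by the setup recalled above.
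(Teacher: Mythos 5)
Your proposal is correct and is essentially the paper's own proof: the paper likewise deduces Koszulness of $(\greg')^!$ from the quadratic Gröbner bases established in Proposition~\ref{Proposition:GB} and then transfers it to $\greg'$ via Koszul duality. Your write-up merely makes explicit the standard ingredients (quadraticity of the basis, the shuffle/symmetric comparison in characteristic $0$, and $((\greg')^!)^!\cong\greg'$) that the paper leaves implicit.
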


\begin{proof}
  We have a quadratic Gröbner basis for $(\greg')^!$, hence $(\greg')^!$ is Koszul, hence $\greg'$ is Koszul.
\end{proof}

\begin{Theorem}  
  The exponential generating series of the operad $\greg'$ is the inverse under composition of $(2t+1)\exp(-t)-1$. Hence, $\greg'$ is isomorphic to $\greg$.
\end{Theorem}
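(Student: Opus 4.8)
The plan is to read off the generating series of $\greg'$ from that of its Koszul dual $(\greg')^!$ by means of the Koszul duality functional equation, and then to compare the result with the generating series of $\greg$ already recorded earlier. Write $f_P(t)=\sum_{n\geq 1}\frac{\dim P(n)}{n!}t^n$ for the exponential generating series of an operad $P$, consistent with the notation $f_\G$ used above; this is a well-defined power series for $\greg'$ since $\greg'$ is a quotient of the free operad on two binary generators, whose arity components are finite-dimensional. The starting point is the standard fact that for a Koszul operad $P$ one has
\[
f_{P^!}\big(-f_P(-t)\big)=t,
\]
see \cite[Section~7.5]{AlgOp}; in the sign-free normalization this is the statement that $f_P$ and $f_{P^!}$ are inverse under composition up to the substitution $t\mapsto -t$ (one checks the sign convention on the familiar dual pair $\Com$, $\Lie$, where $f_\Com(t)=\exp(t)-1$ and $f_\Lie(t)=-\log(1-t)$ indeed satisfy it).

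Since $\greg'$ is Koszul and Koszul duality is involutive, $(\greg')^{!!}=\greg'$, I would apply the identity with $(\greg')^!$ in the role of $P$, which isolates $f_{\greg'}$ directly:
\[
f_{\greg'}\big(-f_{(\greg')^!}(-t)\big)=t.
\]
Plugging in the computed series $f_{(\greg')^!}(t)=(2t-1)\exp(t)+1$ and simplifying gives
\[
-f_{(\greg')^!}(-t)=-\big((-2t-1)\exp(-t)+1\big)=(2t+1)\exp(-t)-1,
\]
so that $f_{\greg'}\big((2t+1)\exp(-t)-1\big)=t$. Thus $f_{\greg'}$ is the compositional inverse of $(2t+1)\exp(-t)-1$; this inverse exists and is unique because $(2t+1)\exp(-t)-1$ has vanishing constant term and linear coefficient $(1-2t)\exp(-t)\big|_{t=0}=1$.

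It remains to upgrade the equality of dimensions to an isomorphism of operads. By the Proposition computing the generating series of $\G$, the series $f_\G$ is likewise the compositional inverse of $(2t+1)\exp(-t)-1$; by uniqueness of the compositional inverse, $f_{\greg'}=f_\G$, hence $\dim\greg'(n)=\dim\greg(n)$ for every $n$, and these dimensions are finite. Finally, the surjective morphism $\greg'\twoheadrightarrow\greg$ constructed immediately after the definition of $\greg'$ is, in each fixed arity, a surjective linear map between finite-dimensional vector spaces of equal dimension, and is therefore an isomorphism; assembling these arity-wise isomorphisms yields $\greg'\cong\greg$.

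The computation itself is short; the only point that needs care is the bookkeeping of signs in the Koszul functional equation, together with the observation that one must substitute $(\greg')^!$ (and not $\greg'$) into the standard identity, using $(\greg')^{!!}=\greg'$, in order to recover $f_{\greg'}$ rather than its inverse. Everything else reduces to a comparison of power series and the elementary fact that a surjection between equidimensional finite-dimensional spaces is bijective.
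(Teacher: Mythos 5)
Your proof is correct and follows essentially the same route as the paper: the Koszul duality functional equation applied to the computed series $f_{(\greg')^!}(t)=(2t-1)\exp(t)+1$ identifies $f_{\greg'}$ as the compositional inverse of $(2t+1)\exp(-t)-1$, which matches $f_\G$, and the arity-wise surjection $\greg'\twoheadrightarrow\greg$ between equidimensional finite-dimensional spaces is then an isomorphism. Your extra care with the sign conventions and the substitution of $(\greg')^!$ (via $(\greg')^{!!}=\greg'$) is a welcome clarification of a step the paper leaves implicit.
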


\begin{proof}
  We know that a Koszul operad $\P$ satisfies $f_\P(f_{\P^!}(-t))=-t$. Hence, the exponential generating series of the operad $\greg'$ is the inverse under composition of $(2t+1)\exp(-t)-1$. Since we have a surjective morphism from $\greg'$ to $\greg$ and that they have the same exponential generating series, the morphism is an isomorphism.
\end{proof}

\begin{Corollary}\label{Theorem:m1}
  The operad $\greg$ is binary, quadratic and Koszul.
\end{Corollary}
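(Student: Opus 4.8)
The plan is to obtain all three properties essentially for free by transporting them along the isomorphism $\greg\cong\greg'$ established immediately above, since the substantive work has already been done. First I would record that binarity is already in hand: the Proposition exhibiting $x$, $y$ and $g$ (all of arity $2$) as generators shows that $\greg$ is a binary operad, so no further argument is needed for that clause.

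Next I would address quadraticity. By construction $\greg'$ is presented with generators $\tx$ and $\tg$ of arity $2$, subject only to the pre-Lie relation and the Greg relation, both of which are quadratic (they lie in the arity-$3$ component of the free operad). Hence $\greg'$ is quadratic. Since having a quadratic presentation is preserved under isomorphism of operads, the isomorphism $\greg\cong\greg'$ equips $\greg$ with a quadratic presentation as well. For Koszulness, I would invoke the Theorem preceding this statement: $\greg'$ is Koszul because its Koszul dual $(\greg')^!$ admits a quadratic Gröbner basis (Proposition \ref{Proposition:GB}). As Koszulness is an intrinsic, isomorphism-invariant property of an operad, it passes through $\greg\cong\greg'$ to $\greg$. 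Assembling these three observations gives the Corollary.

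I do not expect a genuine obstacle at this stage: the entire content is front-loaded into the preceding results, namely the binarity of $\greg$, the Gröbner-basis computation establishing Koszulness of $(\greg')^!$ and hence of $\greg'$, and the exponential-generating-series comparison that upgrades the surjection $\greg'\twoheadrightarrow\greg$ to an isomorphism (using that, over a field of characteristic $0$, a surjection of operads that is an arity-wise equality of finite dimensions must be an isomorphism). The only point meriting a sentence of care is the isomorphism-invariance of the two structural properties \emph{quadratic} and \emph{Koszul}; once that is noted, the Corollary is an immediate bookkeeping assembly of the earlier statements.
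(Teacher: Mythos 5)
Your proposal is correct and follows essentially the same route as the paper: the Corollary is indeed just the assembly of the preceding results (binarity of $\greg$, the quadratic presentation of $\greg'$, Koszulness of $\greg'$ via the quadratic Gr\"obner basis of $(\greg')^!$, and the generating-series argument upgrading the surjection $\greg'\twoheadrightarrow\greg$ to an isomorphism), with properties transported along $\greg\cong\greg'$. Your added remark on the isomorphism-invariance of quadraticity and Koszulness is a reasonable explicit note of a point the paper leaves implicit.
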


\section{Generalization of the Greg operad}\label{seq:GGO}

We have studied the operad $\greg$, however we have not yet relate this operad to $\bigvee_\Lie^2\PL$. In fact, we shall now establish a much more general result about the operad $\bigvee_\Lie^{n+1}\PL$.

\begin{Proposition}
  The operad $\bigvee_\Lie^{n+1}\PL$ is isomorphic to the operad $\F(x, c_1, \dots, c_n)/\langle \mathcal{R}\rangle$, with $x$ without symmetries, $c_k.(1\;2)=c_k$ and $\mathcal{R}$ the relations:
  \begin{equation}\tag{pre-Lie}\label{eq:pl}
    (x \circ_1x -x \circ_2x) - (x \circ_1x -x \circ_2x).(2\;3)
  \end{equation}
  \begin{multline}\tag{diff pre-Lie}\label{eq:vpl}
    (x \circ_1 c_k - (c_k \circ_1 x ).(2\;3) - c_k \circ_2 x ) - 
  (x \circ_1 c_k - (c_k \circ_1 x ).(2\;3) - c_k \circ_2 x ).(2\;3)\\
   +  \sum_{i, j\mid \max(i, j)=k}(c_i\circ_1 c_j - (c_i\circ_1 c_j).(2\;3))
  \end{multline}
\end{Proposition}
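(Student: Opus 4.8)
The plan is to build the isomorphism by a single (unitriangular) change of generators that turns the classical non-quadratic presentation of $\bigvee_\Lie^{n+1}\PL$ into the quadratic one asserted here. Recall from Section~\ref{seq:recall} that $\bigvee_\Lie^{n+1}\PL$ is presented as $\F(x_1,\dots,x_{n+1})/\langle P(x_i);\,b_k\rangle$, where $P(f)=(f\circ_1 f-f\circ_2 f)-(f\circ_1 f-f\circ_2 f).(2\;3)$ is the pre-Lie relation for the $i$-th product and $b_k=(x_k-x_{k+1})-(x_k-x_{k+1}).(1\;2)$ encodes the sharing of the bracket; the $n$ consecutive relations $b_1,\dots,b_n$ generate all $b_{ij}$ by linearity. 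First I would set $x:=x_1$ and $c_k:=x_{k+1}-x_k$ for $1\le k\le n$, an invertible change with inverse $x_{k+1}=x+c_1+\dots+c_k$. This specific \emph{consecutive-difference} substitution is the crucial choice: it is exactly what makes the cross terms $c_i\circ_1 c_j$ with $i\neq j$ appear in \eqref{eq:vpl}.

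Next I would dispose of the linear relations. Under the substitution, $b_k$ becomes $c_k.(1\;2)-c_k$, so the operadic ideal generated by $\{b_k\}$ is generated by a sub-$S_2$-module of the space of binary generators; quotienting the free operad by such an ideal returns the free operad on the quotient generating species. This identifies $\F(x_1,\dots,x_{n+1})/\langle b_k\rangle$ with $\F(x,c_1,\dots,c_n)$, where $x$ has no symmetry and each $c_k$ is symmetric, and reduces the presentation to $\F(x,c_1,\dots,c_n)/\langle\overline{P(x_i)}\rangle$ with $\overline{P(x_i)}$ the rewriting of $P(x_i)$ under $x_i\mapsto x+c_1+\dots+c_{i-1}$.

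The core computation is to match these rewritten relations with \eqref{eq:pl} and \eqref{eq:vpl}. I would use the polarization $P(x+c)=P(x)+B(x,c)+P(c)$, where $B$ is the symmetric bilinear form attached to $P$, together with two simplifications available because the $c_k$ are symmetric: every $\circ_2$-term with a symmetric inner argument is $(2\;3)$-invariant, so on symmetric elements $P$ collapses to the quadratic form of $Q(a,b):=a\circ_1 b-(a\circ_1 b).(2\;3)$, giving $P(C_k)=\sum_{i,j\le k}Q(c_i,c_j)$ for $C_k=c_1+\dots+c_k$; and the same invariance shows that $B(x,c_k)$ equals the asymmetric-looking linear part $(x\circ_1 c_k-(c_k\circ_1 x).(2\;3)-c_k\circ_2 x)-(\dots).(2\;3)$ of \eqref{eq:vpl}. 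The telescoping identity $P(x_{k+1})-P(x_k)=B(x,c_k)+\bigl(\sum_{i,j\le k}-\sum_{i,j\le k-1}\bigr)Q(c_i,c_j)=B(x,c_k)+\sum_{\max(i,j)=k}Q(c_i,c_j)$ is then precisely \eqref{eq:vpl}, while $\overline{P(x_1)}=P(x)$ is \eqref{eq:pl}. Since $\{P(x_i)\}_{i=1}^{n+1}$ and $\{P(x)\}\cup\{P(x_{k+1})-P(x_k)\}_{k=1}^{n}$ are related by an invertible unitriangular transformation, they generate the same operadic ideal, which yields the stated presentation.

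I expect the main obstacle to be the bookkeeping of the $S_3$-action in this core computation: checking that every $\circ_2$-term with a symmetric inner operation is fixed by $(2\;3)$, and that $B(x,c_k)$ genuinely coincides with the linear part of \eqref{eq:vpl}, is elementary but error-prone and must be carried out with fixed conventions for the partial compositions and the induced leaf relabelings. A secondary point to state carefully is the reduction-of-presentation step, namely that quotienting a free operad by the ideal generated by a subspace of its generators returns the free operad on the complementary generating species.
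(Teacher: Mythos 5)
Your proposal is correct and follows essentially the same route as the paper's proof: the consecutive-difference change of generators with $x_{k+1}=x+c_1+\dots+c_k$, elimination of the linear sharing relations into the symmetry $c_k.(1\;2)=c_k$, use of that symmetry to collapse the $\circ_2$-terms, and subtraction of consecutive pre-Lie relations (your telescoping $P(x_{k+1})-P(x_k)=B(x,c_k)+\sum_{\max(i,j)=k}Q(c_i,c_j)$) to obtain \eqref{eq:vpl}. Your polarization and unitriangular-equivalence packaging merely makes explicit two steps the paper treats as immediate, and your convention $c_k=x_{k+1}-x_k$ is in fact the consistent one, since the paper writes $c_k=x_k-x_{k+1}$ but then uses the inverse $x_{k+1}=x+\sum_{i\le k}c_i$.
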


\begin{proof}
  We already know the following presentation: $\bigvee_\Lie^{n+1}\PL\simeq\F(x_1, \dots, x_{n+1})/\langle \mathcal{R}' \rangle$ with $x_k$ without symmetries and $\mathcal{R}'$ the relations:
  \begin{equation}
    \tag{pre-Lie k}\label{eq:pli}
    (x_k \circ_1x_k -x_k \circ_2x_k) - (x_k \circ_1x_k -x_k \circ_2x_k).(2\;3)
  \end{equation}
  \begin{equation}
    \tag{share}\label{eq:share}
    (x_k-x_{k+1})-(x_k-x_{k+1}).(1\;2)
  \end{equation}
  Let $c_k=x_k-x_{k+1}$, then $c_k=c_k.(1\;2)$ is equivalent to Relation (\ref{eq:share}). Let $x=x_1$. We have that $x_{k+1}=x+\sum_{i=1}^{k}c_i$. Hence, $\mathcal{R}'$ is equivalent to:
  \begin{multline*}
    (x \circ_1x -x \circ_2x) - (x \circ_1x -x \circ_2x).(2\;3) + \\
    \sum_{i=1}^{k}((x \circ_1c_i -x \circ_2c_i) - 
    (x \circ_1c_i -x \circ_2c_i).(2\;3) + (c_i \circ_1x -c_i \circ_2x) - 
    (c_i \circ_1x -c_i \circ_2x).(2\;3)) + \\
    \sum_{i=1}^{k}\sum_{j=1}^{k}((c_i \circ_1c_j -c_i \circ_2c_j) - 
    (c_i \circ_1c_j -c_i \circ_2c_j).(2\;3)), 
  \end{multline*}
  which is equal to:
  \begin{multline*}
    (x \circ_1x -x \circ_2x) - (x \circ_1x -x \circ_2x).(2\;3) + \\
    \sum_{i=1}^{k}(x \circ_1c_i - 
    (x \circ_1c_i).(2\;3) + (c_i \circ_1x -c_i \circ_2x) - 
    (c_i \circ_1x -c_i \circ_2x).(2\;3)) + \\
    \sum_{i=1}^{k}\sum_{j=1}^{k}(c_i \circ_1c_j - 
    (c_i \circ_1c_j).(2\;3))
  \end{multline*}
  Finally, if we subtract consecutive relations; we obtain
  \begin{multline*}
    x \circ_1c_k - 
    (x \circ_1c_k).(2\;3) + (c_i \circ_1x -c_k \circ_2x) - 
    (c_k \circ_1x -c_k \circ_2x).(2\;3) +\\
    \sum_{i, j\mid\max(i, j)=k}(c_i \circ_1c_j - 
    (c_i \circ_1c_j).(2\;3)), 
  \end{multline*}
  which is the intended relation.
\end{proof}

This quadratic presentation very much look like the presentation of the operad $\greg$. The operad $\bigvee_\Lie^2\PL$ is not isomorphic to $\greg$, however one may wonder if the operad $\bigvee_\Lie^2\PL$ is a deformation of $\greg$. We shall now show that it is indeed the case.

\paragraph{}
Let $C=(V, \Delta)$ with $V$ be a vector space of finite dimension $n$ and $\Delta$ a coassociative cocommutative coproduct on $V$.

\begin{Definition}
  The vector space of \emph{rooted Greg trees over $V$} is $\G_k^V(m)=\bigotimes_{\tau\in G_k(m)}V^{\otimes BV(\tau)}$. It has a basis of rooted Greg trees whose black vertices are labeled by a basis of $V$.\\ 
  Let $\G^V(m)=\bigoplus_k\G_k^V(m)$ and $\G^V=\bigoplus_m\G^V(m)$.
\end{Definition}

\begin{Definition}
  Let us define the deformed fall product $\star^\Delta$ on $\G^V$. Let $S$ and $T$ be two rooted Greg trees over $V$, and reuse the notation of Section \ref{seq:RT}.
  For $v$ a vertex of $S$, the forest $FS=\{S_1, \dots, S_k\}$ the forest of the children of $v$, $B$ the rooted tree below $v$ and $c^v$ the label of $v$. Let us write:
  \[
    \vcenter{\hbox{\begin{tikzpicture}[scale=0.5]     
      \node (S) at (0, 0) {};

      \node[isosceles triangle, 
      isosceles triangle apex angle=60, 
      draw, 
      rotate=270, 
      fill=white!50, 
      minimum size =24pt] (ST) at (S){};



      \draw[circle, fill=black] (ST.east) circle [radius=2pt];

      \node at (S) {\scalebox{0.8}{$S$}};
    \end{tikzpicture}}}
    \qquad
    =
    \qquad
    \vcenter{\hbox{\begin{tikzpicture}[scale=0.5]     
      \node (lv) at (0, 0) {};
      \node (SD) at (0, -2.7) {};
      \node (F) at (0, 3) {};

      \node[isosceles triangle, 
      isosceles triangle apex angle=60, 
      draw, 
      rotate=270, 
      fill=white!50, 
      minimum size =24pt] (SDT) at (SD){};
      \node[isosceles triangle, 
      isosceles triangle apex angle=60, 
      draw, 
      rotate=270, 
      fill=white!50, 
      minimum size =24pt] (FT) at (F){};

      \draw[thick, double] (FT.east)--(lv);
      \draw[thick] (lv)--(SDT.west);


      \draw[circle, fill=white] (lv) circle [radius=24pt];
      \draw[circle, fill=black] (FT.east) circle [radius=2pt];
      \draw[circle, fill=black] (SDT.east) circle [radius=2pt];

      \node at (lv) {\scalebox{1}{$c^v$}};

      \node at (F) {\scalebox{0.8}{$FS$}};
      \node at (SD) {\scalebox{0.8}{$B$}};
    \end{tikzpicture}}}
  \]
  For $v$ a black vertex and $c^v$ its label, let us write $c_{(1)}^v\otimes c_{(2)}^v=\Delta(c^v)$ using the Sweedler notation. Let us define the product $\star^\Delta$ by:
  \[
  S\star^\Delta T = \resum[1]_{v\in V(S)}
  \vcenter{\hbox{\begin{tikzpicture}[scale=0.5]     
    \node (lv) at (0, 0) {};
    \node (SD) at (0, -3) {};
    \node (T) at (1.5, 3) {};
    \node (FS) at (-1.5, 3) {};

    \node[isosceles triangle, 
    isosceles triangle apex angle=60, 
    draw, 
    rotate=270, 
    fill=white!50, 
    minimum size =24pt] (SDT) at (SD){};
    \node[isosceles triangle, 
    isosceles triangle apex angle=60, 
    draw, 
    rotate=270, 
    fill=white!50, 
    minimum size =24pt] (TT) at (T){};
    \node[isosceles triangle, 
    isosceles triangle apex angle=60, 
    draw, 
    rotate=270, 
    fill=white!50, 
    minimum size =24pt] (FST) at (FS){};    
    \draw[thick] (TT.east)--(lv);
    \draw[thick, double] (FST.east)--(lv);
    \draw[thick] (lv)--(SDT.west);


    \draw[circle, fill=white] (lv) circle [radius=24pt];
    \draw[circle, fill=black] (TT.east) circle [radius=2pt];
    \draw[circle, fill=black] (FST.east) circle [radius=2pt];
    \draw[circle, fill=black] (SDT.east) circle [radius=2pt];

    \node at (lv) {\scalebox{1}{$c^v$}};

    \node at (T) {\scalebox{0.8}{$T$}};
    \node at (FS) {\scalebox{0.8}{$FS$}};
    \node at (SD) {\scalebox{0.8}{$B$}};
  \end{tikzpicture}}}\quad + \resum[1]_{v\in BV(S)}\;\resum[1]_{FS_{(1)}\sqcup FS_{(2)}=FS}
  \vcenter{\hbox{\begin{tikzpicture}[scale=0.5]     
    \node (lv0) at (-1.5, -3) {};
    \node (lv) at (0, 0) {};
    \node (SD) at (-1.5, -6) {};
    \node (T) at (1.5, 3) {};
    \node (FS) at (-1.5, 3) {};
    \node (FS2) at (-3, 0) {};

    \node[isosceles triangle, 
    isosceles triangle apex angle=60, 
    draw, 
    rotate=270, 
    fill=white!50, 
    minimum size =24pt] (SDT) at (SD){};
    \node[isosceles triangle, 
    isosceles triangle apex angle=60, 
    draw, 
    rotate=270, 
    fill=white!50, 
    minimum size =24pt] (TT) at (T){};
    \node[isosceles triangle, 
    isosceles triangle apex angle=60, 
    draw, 
    rotate=270, 
    fill=white!50, 
    minimum size =28pt] (FST) at (FS){};  
    \node[isosceles triangle, 
    isosceles triangle apex angle=60, 
    draw, 
    rotate=270, 
    fill=white!50, 
    minimum size =28pt] (FS2T) at (FS2){};    
    \draw[thick] (TT.east)--(lv);
    \draw[thick, double] (FST.east)--(lv);
    \draw[thick, double] (FS2T.east)--(lv0);
    \draw[thick] (lv)--(lv0);
    \draw[thick] (lv0)--(SDT.west);


    \draw[circle, fill=white] (lv0) circle [radius=24pt];
    \draw[circle, fill=white] (lv) circle [radius=24pt];
    \draw[circle, fill=black] (TT.east) circle [radius=2pt];
    \draw[circle, fill=black] (FST.east) circle [radius=2pt];
    \draw[circle, fill=black] (FS2T.east) circle [radius=2pt];
    \draw[circle, fill=black] (SDT.east) circle [radius=2pt];

    \node at (lv0) {\scalebox{1}{$c_{(1)}^v$}};
    \node at (lv) {\scalebox{1}{$c_{(2)}^v$}};

    \node at (T) {\scalebox{0.8}{$T$}};
    \node at (FS) {\scalebox{0.8}{$FS_{(2)}$}};
    \node at (FS2) {\scalebox{0.8}{$FS_{(1)}$}};
    \node at (SD) {\scalebox{0.8}{$B$}};
  \end{tikzpicture}}}
  \]
  For the sake of readability, let us write:
  \[
    S\star^\Delta T  = \qquad
    \vcenter{\hbox{\begin{tikzpicture}[scale=0.5]     
      \node (lv) at (0, 0) {};
      \node (T) at (0, 3) {};

      \node[isosceles triangle, 
      isosceles triangle apex angle=60, 
      draw, 
      rotate=270, 
      fill=white!50, 
      minimum size =24pt] (TT) at (T){};
      
      \draw[thick] (TT.east)--(lv);


      \draw[circle, fill=white] (lv) circle [radius=24pt];
      \draw[circle, fill=black] (TT.east) circle [radius=2pt];
      
      \node at (lv) {\scalebox{1}{$c^v$}};

      \node at (T) {\scalebox{0.8}{$T$}};
    \end{tikzpicture}}} \qquad + \qquad
    \vcenter{\hbox{\begin{tikzpicture}[scale=0.5]     
      \node (lv0) at (0, -3) {};
      \node (lv) at (0, 0) {};
      \node (T) at (0, 3) {};

      \node[isosceles triangle, 
      isosceles triangle apex angle=60, 
      draw, 
      rotate=270, 
      fill=white!50, 
      minimum size =24pt] (TT) at (T){};
      
      \draw[thick] (TT.east)--(lv);
      \draw[thick] (lv)--(lv0);


      \draw[circle, fill=white] (lv0) circle [radius=24pt];
      \draw[circle, fill=white] (lv) circle [radius=24pt];
      \draw[circle, fill=black] (TT.east) circle [radius=2pt];
      
      \node at (lv0) {\scalebox{1}{$c_{(1)}^v$}};
      \node at (lv) {\scalebox{1}{$c_{(2)}^v$}};

      \node at (T) {\scalebox{0.8}{$T$}};
    \end{tikzpicture}}}  
  \]
\end{Definition}

\begin{Proposition}
  The deformed fall product $\star^\Delta$ is pre-Lie.
\end{Proposition}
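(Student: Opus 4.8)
The plan is to follow the template of the proof that the undeformed fall product is pre-Lie, that is, to show that the associator
\[
A(R,S,T) = (R \star^\Delta S) \star^\Delta T - R \star^\Delta (S \star^\Delta T)
\]
is symmetric in $S$ and $T$. First I would record the decomposition $\star^\Delta = \star + \star'$, where $\star$ is the undeformed fall product that grafts the second argument as a new child of a vertex, and $\star'$ is the splitting part, which for each black vertex $v$ of the first argument and each partition $FS = FS_{(1)} \sqcup FS_{(2)}$ of its children replaces $v$ by the edge determined by $\Delta(c^v) = c_{(1)}^v \otimes c_{(2)}^v$, putting $FS_{(1)}$ on the lower vertex $c_{(1)}^v$ and grafting the second argument together with $FS_{(2)}$ on the upper vertex $c_{(2)}^v$. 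Distributing the two iterated products over the sum $\star^\Delta = \star + \star'$ produces eight terms, and the goal is to show that everything in which the last-inserted tree $T$ comes to sit strictly inside $S$ cancels against $R \star^\Delta (S \star^\Delta T)$, leaving only configurations in which $S$ and $T$ interact independently with $R$.

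Next I would classify the surviving terms geometrically, exactly as in the undeformed computation. In $(R \star^\Delta S) \star^\Delta T$ the tree $T$ may fall on, or split, a vertex of $S$, a vertex of $R$, or one of the two vertices created when $S$ split a black vertex of $R$; whereas in $R \star^\Delta (S \star^\Delta T)$ the tree $T$ only ever interacts with $S$ before the combined tree is attached to $R$. Matching these, the terms with $T$ inside $S$ cancel in pairs, and the remainder of $A(R,S,T)$ consists of the terms in which $S$ and $T$ act independently on vertices of $R$. These fall into three families: both graft onto (possibly equal) vertices of $R$; one grafts while the other splits a black vertex of $R$; and both act at the same black vertex of $R$. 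For the grafting--grafting interactions I can appeal directly to the proof that the undeformed fall product is pre-Lie, since splitting does not affect the labels carried by the grafting part.

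The first two families are manifestly symmetric in $S$ and $T$, as for the plain fall product. The third family is the crux, and it is where the coalgebra axioms enter. When $S$ and $T$ both split the same black vertex $v$, the choice of which of the two vertices produced by the first split is split again realizes the two sides of the coassociativity relation $(\Delta \otimes \id)\Delta = (\id \otimes \Delta)\Delta$ applied to $c^v$; coassociativity is precisely what lets these configurations be collected into a single family indexed by the iterated coproduct, with $S$ and $T$ grafted on the two outer vertices and the children of $v$ distributed among the three resulting vertices. Within this family the configuration with $S$ above and $T$ below is carried to the one with $T$ above and $S$ below by interchanging the two outer coproduct legs, which is legitimate by cocommutativity of $\Delta$. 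Hence this family is symmetric under $S \leftrightarrow T$ as well, and combining the three families gives $A(R,S,T) = A(R,T,S)$, so $\star^\Delta$ is pre-Lie.

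The main obstacle is the bookkeeping in the third family: one must verify that splitting a black vertex of $R$ and then splitting the vertex carrying the first grafted tree is accounted for by a single application of an iterated coproduct, and that the partitions $FS = FS_{(1)} \sqcup FS_{(2)}$ of the children, together with the placement of the grafted tree, recombine without leaving spurious asymmetric terms. I would organize the argument so as to isolate this splitting--splitting interaction, dispatch the purely grafting interactions by the undeformed result, and invoke coassociativity and cocommutativity only at the single point where they are genuinely needed.
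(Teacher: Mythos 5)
Your proposal is correct and takes essentially the same route as the paper: the paper likewise proves the claim by the direct (admittedly tedious) computation of $(R\star^\Delta S)\star^\Delta T - R\star^\Delta(S\star^\Delta T)$, relegated to Figure \ref{fg:StComp1}, cancelling the terms where $T$ lands inside $S$ against $R\star^\Delta(S\star^\Delta T)$ and invoking coassociativity and cocommutativity of $\Delta$ precisely where you do, namely to symmetrize the terms where $S$ and $T$ both split the same black vertex. Your decomposition of $\star^\Delta$ into grafting and splitting parts is just a more structured bookkeeping of that same computation.
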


\begin{proof}
  The proof is the tedious computation of $(R\star^\Delta S) \star^\Delta T-R\star^\Delta (S \star^\Delta T)$. The computation of $(R\star^\Delta S) \star^\Delta T$ is done in Figure \ref{fg:StComp1}; $r$, $r'$ and $s$ are labels of vertices of $R$ and $S$ respectively. The boxed terms are the terms of $R\star^\Delta (S \star^\Delta T)$. Using the coassociativity and cocommutativity of $\Delta$, we get that $(R\star^\Delta S) \star^\Delta T-R\star^\Delta (S \star^\Delta T)$ is symmetric in $S$ and $T$. Hence, the deformed fall product $\star^\Delta$ is pre-Lie.
\end{proof}

\begin{Remark}
  One may remark that cocommutativity is stronger than the needed condition, indeed the weaker needed  condition is that $r_{(1)}\otimes r_{(2)}\otimes r_{(3)}=r_{(1)}\otimes r_{(3)}\otimes r_{(2)}$ using Sweedler notation, which is known as the copermutativity property.
\end{Remark}

The symmetric brace product $Br^\Delta$ associated to $\star^\Delta$ is also defined by the following formula:
$$Br^\Delta(S;T_1, \dots, T_{n+1})=Br^\Delta(S;T_1, \dots, T_n)\star^\Delta T_{n+1}- \sum_{i=1}^nBr^\Delta(S;T_1, \dots, T_i\star^\Delta T_{n+1}, \dots, T_n)$$
Same as $Br$, $Br^\Delta$ is symmetric in the $T_i$'s.

\begin{Definition}
  The partial compositions $\circ_i^\Delta$ are defined the same way as in Definition \ref{Definition:inf} by:
  \[
    T\circ_i^\Delta S = 
    \vcenter{\hbox{\begin{tikzpicture}[scale=0.5]     
      \node (T) at (0, 0) {};
      \node (i) at (0, 3) {};
      \node (S) at (0, 6) {};

      \node[isosceles triangle, 
      isosceles triangle apex angle=60, 
      draw, 
      rotate=270, 
      fill=white!50, 
      minimum size =24pt] (TT) at (T){};
      \node[isosceles triangle, 
      isosceles triangle apex angle=60, 
      draw, 
      rotate=270, 
      fill=white!50, 
      minimum size =24pt] (ST) at (S){}; 

      \draw[thick] (TT.west)--(ST.east);


      \draw[circle, fill=black] (TT.east) circle [radius=2pt];
      \draw[circle, fill=black] (ST.east) circle [radius=2pt];
      \draw[circle, fill=white] (i) circle [radius=24pt];

      \node[cross=13pt] at (i) {};
      
      \node at (T) {\scalebox{0.8}{$B$}};
      \node at (i) {\scalebox{0.8}{$v$}};
      \node at (S) {\scalebox{0.8}{$U$}};

    \end{tikzpicture}}}
  \]
  With $U=Br^\Delta(S;FT)$.
\end{Definition}

\begin{Proposition}
  The partial compositions $\circ_i^\Delta$ satisfy the sequential composition and parallel composition axioms.
\end{Proposition}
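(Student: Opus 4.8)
The plan is to follow the structure of the proof of Proposition \ref{Proposition:inf} almost verbatim, since the partial compositions $\circ_i^\Delta$ are defined by exactly the same formula as in Definition \ref{Definition:inf}, only with the pair $(\star^\Delta, Br^\Delta)$ in place of $(\star, Br)$. The key observation is that the argument for Proposition \ref{Proposition:inf} never uses any feature of the undeformed fall product beyond the two facts that $\star$ is a pre-Lie product and that $Br$ is its associated symmetric brace: both axioms were reduced there to identities among symmetric brace products. Since we have just shown that $\star^\Delta$ is pre-Lie and that $Br^\Delta$ is its symmetric brace via the Oudom--Guin formula, the same reduction is available and the same identities hold. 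First I would therefore record that it suffices to verify the two axioms at the level of the braces $Br^\Delta$, and then treat the parallel and sequential cases separately.

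For the parallel composition axiom, take $i$ and $j$ to be the labels of two distinct white vertices $v_i$ and $v_j$ of $T$. Inserting $S$ at $v_i$ lets the children $FT^{(i)}$ fall on $S$ via $Br^\Delta(S;FT^{(i)})$, while inserting $R$ at $v_j$ lets $FT^{(j)}$ fall on $R$ via $Br^\Delta(R;FT^{(j)})$. These two operations act on disjoint children forests and on the disjointly labelled trees $S$ and $R$, and the $\Delta$-splittings of black vertices triggered by each brace stay local to its own insertion site; hence the two insertions commute and $(T\circ_i^\Delta S)\circ_j^\Delta R=(T\circ_j^\Delta R)\circ_i^\Delta S$. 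This is the easy case and is structurally unchanged from the undeformed computation.

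The sequential composition axiom is the substantial part. Taking $i$ the label of a white vertex of $T$ and $j$ the label of a white vertex $v_j$ of $S$, the comparison of $(T\circ_i^\Delta S)\circ_j^\Delta R$ with $T\circ_i^\Delta(S\circ_j^\Delta R)$ reduces, exactly as in Proposition \ref{Proposition:inf}, to a nested-brace identity of the shape
$$Br^\Delta\bigl(Br^\Delta(R;FS);\widetilde{FT}\bigr)=Br^\Delta\bigl(R;\ldots\bigr),$$
expressing that distributing the relevant children forests in two stages agrees with distributing them in a single stage. This is precisely the associativity of the symmetric brace, a formal consequence of the pre-Lie relation, and therefore holds for $Br^\Delta$. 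I would carry out the comparison by a figure analogous to Figure \ref{fg:StComp1}, organising the terms on both sides.

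The main obstacle, and the only genuinely new ingredient compared with Proposition \ref{Proposition:inf}, is the bookkeeping of the $\Delta$-splitting terms. Each application of $\star^\Delta$, hence of $Br^\Delta$, may split the label of a black vertex through $\Delta$, and in the two-stage process a given black vertex of $R$ can be split first by the fall of $FS$ and then again by the fall of $\widetilde{FT}$; matching this against the single-stage splitting forces one iterated coproduct to be rewritten as another. Coassociativity of $\Delta$ reconciles the two nesting patterns and cocommutativity absorbs the symmetry among the falling subforests, exactly as they did in the proof that $\star^\Delta$ is pre-Lie. Once these two coalgebra identities are invoked, both sides of the nested-brace identity coincide termwise and the sequential axiom follows. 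Alternatively, one may bypass the combinatorics entirely by treating $\star^\Delta$ as an abstract pre-Lie product and quoting the Oudom--Guin correspondence, which guarantees the symmetric brace relations for $Br^\Delta$; the reduction used in Proposition \ref{Proposition:inf} then yields both axioms.
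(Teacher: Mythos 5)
Your proposal is correct and takes essentially the same approach as the paper, whose entire proof is the single remark that the argument is the same as that of Proposition \ref{Proposition:inf}; you carry out exactly that adaptation, replacing $(\star, Br)$ by $(\star^\Delta, Br^\Delta)$ and reducing both axioms to the same brace identities. Your additional observation that coassociativity and cocommutativity of $\Delta$ reconcile the two-stage versus one-stage splittings of black-vertex labels is precisely the bookkeeping the paper leaves implicit, being the same mechanism already used to prove that $\star^\Delta$ is pre-Lie.
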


The proof is the same as the one of Proposition \ref{Proposition:inf}.

\begin{Definition}
  With $C=(V, \Delta)$, let $\greg^C$ be the operad $(\G^V, \{\circ_i^\Delta\})$. Let $(e^1, \dots, e^n)$ be a basis of $V$ and:
  \[
  x_n=
    \vcenter{\hbox{\begin{tikzpicture}[scale=0.3]     
    \node (T) at (0, 0) {};
    \node (S1) at (3, 3) {};
    \node (S2) at (0, 3) {};
    \node (S3) at (-3, 3) {};
      
    \draw[thick] (T)--(S1);
    \draw[thick] (T)--(S3);

    \node at (S2) {$\cdots$};

    \draw[circle, fill=white] (S1) circle [radius=24pt];
    \draw[circle, fill=white] (S3) circle [radius=24pt];
    \draw[circle, fill=white] (T) circle [radius=24pt];

  
    \node at (T) {\scalebox{0.8}{$1$}};
    \node at (S3) {\scalebox{0.8}{$2$}};
    \node at (S1) {\scalebox{0.8}{$n$}};
    \end{tikzpicture}}} 
    \qquad
  g_n^k=
    \vcenter{\hbox{\begin{tikzpicture}[scale=0.3]     
    \node (T) at (0, 0) {};
    \node (S1) at (3, 3) {};
    \node (S2) at (0, 3) {};
    \node (S3) at (-3, 3) {};
    
    \draw[thick] (T)--(S1);
    \draw[thick] (T)--(S3);
    \node at (S2) {$\cdots$};
    \draw[circle, fill=white] (S1) circle [radius=24pt];
    \draw[circle, fill=white] (S3) circle [radius=24pt];
    \draw[circle, fill=black] (T) circle [radius=28pt];
    
    \node at (S3) {\scalebox{0.8}{$1$}};
    \node at (S1) {\scalebox{0.8}{$n$}};
    \node at (T) {\scalebox{1}{$\textcolor{white}{e^k}$}};
    \end{tikzpicture}}}
  \]
  Let $x=x_2$ and $g^k=g_2^k$.
\end{Definition}

\begin{Proposition}
  The operad $\greg^{C}$ is binary and satisfy the following relations:
  \begin{equation}\tag{pre-Lie}
    (x \circ_1x -x \circ_2x) - (x \circ_1x -x \circ_2x).(2\;3)
  \end{equation}
  \begin{multline}\tag{greg $\Delta$}\label{eq:gregDelta}
    (x \circ_1 g^k - (g^k \circ_1 x ).(2\;3) - g^k \circ_2 x ) - 
    (x \circ_1 g^k - (g^k \circ_1 x ).(2\;3) - g^k \circ_2 x ).(2\;3)\\
    +  (g^k_{(1)}\circ_1 g^k_{(2)} - (g^k_{(1)}\circ_1 g^k_{(2)}).(2\;3))  
  \end{multline}
  With $g^k_{(1)}$ and $g^k_{(2)}$ defined by $\Delta$ under the identification of $V$ with the span of the generators $g^k$.
\end{Proposition}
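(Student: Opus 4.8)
The plan is to verify the three claims in turn: that $\greg^C$ is generated in arity two, that $x$ satisfies the pre-Lie relation, and that $x$ together with the corollas $g^k$ satisfies the greg $\Delta$ relation. Only the last requires real computation; the first two are inherited from the undeformed operad $\greg$.

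For binarity, I would rerun the induction that proved $\greg$ binary, now with generating set $\{x,y\}\cup\{g^k\}_k$, where $y=x.(1\;2)$ and $g^k=g_2^k$. To control the extra terms created by $\Delta$, filter $\G^V$ by weight (number of black vertices), a filtration that is finite in each arity since a tree of arity $n$ has at most $n-1$ black vertices. The partial composition $\circ_i^\Delta$ respects this filtration and agrees with the naive $\circ_i$ of Definition \ref{Definition:inf} modulo strictly higher weight, so the associated graded operad is the undeformed operad of $V$-labelled rooted Greg trees (coproduct set to zero). That operad is generated in arity two by exactly the corolla-composition argument used for $\greg$, the labels $e^k$ acting as colours and every label being reached by linearity since $(e^k)$ is a basis. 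As a filtered operad whose associated graded is generated by the symbols of these arity-two elements, $\greg^C$ is itself generated by them.

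For the pre-Lie relation, note that $x=x_2$ has only white vertices, so the suboperad it generates lives in weight $0$; there $\star^\Delta$ has no splitting term and coincides with the plain fall product $\star$, which is pre-Lie. Hence $x\circ_1^\Delta x-x\circ_2^\Delta x=x\circ_1 x-x\circ_2 x$ is symmetric in its last two slots, i.e.\ the pre-Lie relation holds.

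The core of the argument is the greg $\Delta$ relation, and the delicate point is bookkeeping the splitting terms of $\star^\Delta$. Imitating the arity-$3$ computation of the Example for $\greg$, I would first expand $x\circ_1^\Delta g^k=Br^\Delta(g^k;\{3\})=g^k\star^\Delta(\text{leaf }3)$. Its three ordinary fall terms give the corolla $g_3^k$ and the two trees $P_1,P_2$ obtained by grafting leaf $3$ onto a white child of $g^k$; the latter two are cancelled by $(g^k\circ_1^\Delta x).(2\;3)+g^k\circ_2^\Delta x$, which carry no deformation since they insert at white leaves. The genuinely new contribution is the splitting sum over the black root, and here one must remember that after splitting the lower copy $g^k_{(1)}$ retains the upper copy $g^k_{(2)}$ as a child; the requirement that both black vertices keep at least two children then allows exactly the partitions $\{1\}\sqcup\{2\}$ and $\{2\}\sqcup\{1\}$ of the white children and yields two trees $R_1,R_2$. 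Thus, with the Sweedler sum over $\Delta(e^k)$ understood, $A^\Delta:=x\circ_1^\Delta g^k-(g^k\circ_1^\Delta x).(2\;3)-g^k\circ_2^\Delta x=g_3^k+R_1+R_2$. As $g_3^k$ and $R_1$ are fixed by $(2\;3)$, one finds $A^\Delta-A^\Delta.(2\;3)=R_2-R_2.(2\;3)$. A final short computation, again free of deformation because it is a composition at a white leaf, gives $g^k_{(1)}\circ_1^\Delta g^k_{(2)}=R_2.(2\;3)$, so that $g^k_{(1)}\circ_1^\Delta g^k_{(2)}-(g^k_{(1)}\circ_1^\Delta g^k_{(2)}).(2\;3)=R_2.(2\;3)-R_2$; adding the two contributions cancels termwise and proves the relation. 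I expect the main obstacle to be exactly this matching of the splitting terms with $g^k_{(1)}\circ_1^\Delta g^k_{(2)}$, in particular not forgetting the edge joining the two halves of a split black vertex when imposing the at-least-two-children condition.
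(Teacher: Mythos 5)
Your proposal is correct, and its computational core coincides with the paper's: the expansion of $x\circ_1^\Delta g^k=g^k\star^\Delta(\text{leaf }3)$ into the corolla $g_3^k$, the two grafted trees cancelled by $(g^k\circ_1 x).(2\;3)$ and $g^k\circ_2 x$ (both deformation-free because one inserts at white leaves), and exactly the two splitting terms allowed by the at-least-two-children condition, $FS_{(1)}\sqcup FS_{(2)}=\{1\}\sqcup\{2\}$ or $\{2\}\sqcup\{1\}$, is precisely the five-term computation displayed in the paper's proof. You in fact carry the verification one step further than the paper, which stops after computing $x\circ_1 g^k$ and declares the relation checked, whereas you make explicit the identification $g^k_{(1)}\circ_1 g^k_{(2)}=R_2.(2\;3)$ (a leaf composition, hence undeformed) and the termwise cancellation; that identification, including the correct Sweedler-label placement with $g^k_{(1)}$ below, is right and needs no cocommutativity. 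Where you genuinely diverge is binarity: the paper runs the induction on arity directly inside $\greg^C$, computing $x\circ_1 x_m$ and $x\circ_1 g^k_m$ with the deformation terms present and absorbing them via the induction hypothesis, then reaches arbitrary trees by deformation-free grafting of corollas at leaves; you instead filter by weight, note that $\circ_i^\Delta$ agrees with the naive $\circ_i$ modulo strictly higher weight, prove binarity of the associated graded $\greg_n=\greg^{(V,0)}$ by the same corolla argument, and lift generation through the filtration, which is legitimate since the weight in arity $m$ is bounded by $m-1$ so the downward induction terminates. Your packaging isolates the deformation once and for all and is prefigured by the paper's own later corollary that $\bigvee_\Lie^{n+1}\PL$ is filtered with associated graded $\greg_n$, at the mild cost of invoking the standard filtered-to-graded generation lemma; the paper's direct induction is more elementary and self-contained. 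Your weight-zero justification of the pre-Lie relation (all-white trees are closed under the deformed compositions, on which $\star^\Delta=\star$) is a correct expansion of the paper's terse appeal to the rooted-tree case.
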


\begin{proof}
  Let us compute $x \circ_1 g^k$:
  \[
  \vcenter{\hbox{\begin{tikzpicture}[scale=0.3]     
    \node (1) at (0, 0) {};
    \node (2) at (0, 3) {};
    
    
    \draw[thick] (1)--(2);
    
    
    \draw[circle, fill=white] (1) circle [radius=24pt];
    \draw[circle, fill=white] (2) circle [radius=24pt];
    
    \node at (1) {\scalebox{0.8}{$1$}};
    \node at (2) {\scalebox{0.8}{$3$}};

  \end{tikzpicture}}}\circ_1\vcenter{\hbox{\begin{tikzpicture}[scale=0.3]     
    \node (1) at (-1.5, 3) {};
    \node (2) at (1.5, 3) {};
    \node (c) at (0, 0) {};


    \draw[thick] (c)--(1);
    \draw[thick] (c)--(2);


    \draw[circle, fill=black] (c) circle [radius=28pt];
    \draw[circle, fill=white] (1) circle [radius=24pt];
    \draw[circle, fill=white] (2) circle [radius=24pt];
    
    \node at (1) {\scalebox{0.8}{$1$}};
    \node at (2) {\scalebox{0.8}{$2$}};
    \node at (c) {\scalebox{1}{$\textcolor{white}{e^k}$}};

  \end{tikzpicture}}}= 
  \vcenter{\hbox{\begin{tikzpicture}[scale=0.3]     
    \node (1) at (-1.5, 3) {};
    \node (2) at (1.5, 3) {};
    \node (3) at (-1.5, 6) {};
    \node (c) at (0, 0) {};


    \draw[thick] (c)--(1);
    \draw[thick] (c)--(2);
    \draw[thick] (1)--(3);


    \draw[circle, fill=black] (c) circle [radius=28pt];
    \draw[circle, fill=white] (1) circle [radius=24pt];
    \draw[circle, fill=white] (2) circle [radius=24pt];
    \draw[circle, fill=white] (3) circle [radius=24pt];
    
    \node at (1) {\scalebox{0.8}{$1$}};
    \node at (2) {\scalebox{0.8}{$2$}};
    \node at (3) {\scalebox{0.8}{$3$}};
    \node at (c) {\scalebox{1}{$\textcolor{white}{e^k}$}};

  \end{tikzpicture}}}+
  \vcenter{\hbox{\begin{tikzpicture}[scale=0.3]     
    \node (1) at (-1.5, 3) {};
    \node (2) at (1.5, 3) {};
    \node (3) at (1.5, 6) {};
    \node (c) at (0, 0) {};


    \draw[thick] (c)--(1);
    \draw[thick] (c)--(2);
    \draw[thick] (2)--(3);


    \draw[circle, fill=black] (c) circle [radius=28pt];
    \draw[circle, fill=white] (1) circle [radius=24pt];
    \draw[circle, fill=white] (2) circle [radius=24pt];
    \draw[circle, fill=white] (3) circle [radius=24pt];
    
    \node at (1) {\scalebox{0.8}{$1$}};
    \node at (2) {\scalebox{0.8}{$2$}};
    \node at (3) {\scalebox{0.8}{$3$}};
    \node at (c) {\scalebox{1}{$\textcolor{white}{e^k}$}};

  \end{tikzpicture}}}+
  \vcenter{\hbox{\begin{tikzpicture}[scale=0.3]     
    \node (1) at (-2, 3) {};
    \node (2) at (0, 3) {};
    \node (3) at (2, 3) {};
    \node (c) at (0, 0) {};


    \draw[thick] (c)--(1);
    \draw[thick] (c)--(2);
    \draw[thick] (c)--(3);


    \draw[circle, fill=black] (c) circle [radius=28pt];
    \draw[circle, fill=white] (1) circle [radius=24pt];
    \draw[circle, fill=white] (2) circle [radius=24pt];
    \draw[circle, fill=white] (3) circle [radius=24pt];
    
    \node at (1) {\scalebox{0.8}{$1$}};
    \node at (2) {\scalebox{0.8}{$2$}};
    \node at (3) {\scalebox{0.8}{$3$}};
    \node at (c) {\scalebox{1}{$\textcolor{white}{e^k}$}};

  \end{tikzpicture}}}+
  \vcenter{\hbox{\begin{tikzpicture}[scale=0.3]     
    \node (1) at (-1.5, 3) {};
    \node (2) at (0, 6) {};
    \node (3) at (3, 6) {};
    \node (c) at (0, 0) {};
    \node (g) at (1.5, 3) {};


    \draw[thick] (c)--(1);
    \draw[thick] (c)--(g);
    \draw[thick] (g)--(2);
    \draw[thick] (g)--(3);


    \draw[circle, fill=black] (c) circle [radius=32pt];
    \draw[circle, fill=black] (g) circle [radius=32pt];
    \draw[circle, fill=white] (1) circle [radius=24pt];
    \draw[circle, fill=white] (2) circle [radius=24pt];
    \draw[circle, fill=white] (3) circle [radius=24pt];
    
    \node at (1) {\scalebox{0.8}{$1$}};
    \node at (2) {\scalebox{0.8}{$2$}};
    \node at (3) {\scalebox{0.8}{$3$}};
    \node at (c) {\scalebox{1}{$\textcolor{white}{e_{(1)}^k}$}};
    \node at (g) {\scalebox{1}{$\textcolor{white}{e_{(2)}^k}$}};

  \end{tikzpicture}}}+
  \vcenter{\hbox{\begin{tikzpicture}[scale=0.3]     
    \node (g) at (-1.5, 3) {};
    \node (3) at (0, 6) {};
    \node (1) at (-3, 6) {};
    \node (c) at (0, 0) {};
    \node (2) at (1.5, 3) {};


    \draw[thick] (g)--(1);
    \draw[thick] (c)--(g);
    \draw[thick] (c)--(2);
    \draw[thick] (g)--(3);


    \draw[circle, fill=black] (c) circle [radius=32pt];
    \draw[circle, fill=black] (g) circle [radius=32pt];
    \draw[circle, fill=white] (1) circle [radius=24pt];
    \draw[circle, fill=white] (2) circle [radius=24pt];
    \draw[circle, fill=white] (3) circle [radius=24pt];
    
    \node at (1) {\scalebox{0.8}{$1$}};
    \node at (2) {\scalebox{0.8}{$2$}};
    \node at (3) {\scalebox{0.8}{$3$}};
    \node at (c) {\scalebox{1}{$\textcolor{white}{e_{(1)}^k}$}};
    \node at (g) {\scalebox{1}{$\textcolor{white}{e_{(2)}^k}$}};

  \end{tikzpicture}}}
  \]
  This shows that the operad $\greg^C$ satisfies Relation \ref{eq:gregDelta}.\\
  Let $\P(x, y, g^1, \dots, g^n)$ be the suboperad of $\greg^C$ generated by $x$, $y$ and $g^1, \dots, g^n$. We have to show that $\P(x, y, g^1, \dots, g^n)=\greg^C$. Let us prove it by induction on the arity.\\
  \emph{Base case:} By definition $\P(x, y, g^1, \dots, g^n)(2)=\greg^C(2)$.\\
  \emph{Induction step:} Let $m\geq 2$ and suppose that $\P(x, y, g^1, \dots, g^n)(k)=\greg^C(k)$ for all $k\leq m$. We have to show that $\P(x, y, g^1, \dots, g^n)(m+1)=\greg^C(m+1)$.\\
  Computing $x\circ_1x_m$ and $x\circ_1g^k_m$ shows that $x_{m+1}\in \P(x, y, g)(m+1)$ and $g^k_{m+1}\in \P(x, y, g)(m+1)$. Since we can obtain any rooted Greg trees by inductively composing corollas in the leaves of smaller trees, we have $\greg^C(m+1)= \P(x, y, g^1, \dots, g^n)(m+1)$.\\
  Hence by induction, $\P(x, y, g^1, \dots, g^n)=\greg^C$.
\end{proof}

\section{Koszul dual and Koszulness}

Let us use the same strategy as in the previous sections to prove that the operad $\greg^C$ is Koszul. 

\begin{Definition}
  Let $\G^C$ the operad defined by generators and relations as follows: $\tx$ is a generator without symmetries, $\tg^k$ are symmetric generators such that:
  \begin{equation*}
    (\tx \circ_1\tx -\tx \circ_2\tx) - (\tx \circ_1\tx -\tx \circ_2\tx).(2\;3)
  \end{equation*}
  \begin{multline*}
    (\tx \circ_1 \tg^k - (\tg^k \circ_1 \tx ).(2\;3) - \tg^k \circ_2 \tx ) - 
    (\tx \circ_1 \tg^k - (\tg^k \circ_1 \tx ).(2\;3) - \tg^k \circ_2 \tx ).(2\;3)\\
    +  (\tg^k_{(1)}\circ_1 \tg^k_{(2)} - (\tg^k_{(1)}\circ_1 \tg^k_{(2)}).(2\;3))  
  \end{multline*}
  With $\tg^k_{(1)}$ and $\tg^k_{(2)}$ defined by $\Delta$ under the identification of $V$ with the span of the generators $\tg^k$.
\end{Definition}  

Let $C^*=(V^*, \mu)$ the linear dual of $C$. This is a commutative algebra of dimension $n$.

\begin{Definition}
  Let $(\G^C)^!$ the operad defined by generators and relations as follows: $x_*$ is a generator without symmetries, $g^k_*$ are skew-symmetric generators such that:
  $$x_*\circ_1x_*-x_*\circ_2x_* \qquad ; \qquad x_*\circ_1x_*-(x_*\circ_1x_*).(2\;3)$$
  $$x_*\circ_1g^k_*-g^k_*\circ_2x_* \qquad ; \qquad x_*\circ_1g^k_*-(g^k_*\circ_1x_*).(2\;3)$$
  $$x_*\circ_1g^k_*+(x_*\circ_1g^k_*).(1\;2\;3)+(x_*\circ_1g^k_*).(1\;3\;2)$$
  $$x_*\circ_2g^k_*  \qquad ; \qquad g^i_*\circ_1g^j_*-x\circ_1g_*^{i.j}$$
  With the notation $g_*^{i.j}=\mu(g_*^i, g_*^j)$, one should be careful since $i.j$ in not the product of $i$ and $j$; this is just a way to keep notation more compact.
\end{Definition}

Let us generalize the construction of $W(\chi)$ of Definition \ref{Definition:walg} to get an example of $(\G^C)^!$-algebra that allows us to show that $\dim((\G^C)^!(4))\geq 4+3n$.

\begin{Definition}
  Let $\chi$ be a finite alphabet and $\mathbf{W}_C(\chi)$ be the span of finite words on $\chi$ with the following extra decorations: either one letter is pointed with a dot or there is an arrow from one letter to another, the arrow is linearly labeled by $V^*$. \\
  Let us write $\overset{i}{\curvearrowright}$ instead of $\overset{e^*_i}{\curvearrowright}$ and $\overset{i.j}{\curvearrowright}$ instead of $\overset{\mu(e^*_i, e^*_j)}{\curvearrowright}$.\\
  Let $W_C(\chi)$ be the quotient of $\mathbf{W}_C(\chi)$ by the following relations, letters commute with each other (the dot and the arrow follow the letter), reverting the arrow changes the sign and 
  $$\overset{\overset{i}{\curvearrowright}}{ab}cv=\overset{\overset{i}{\curvearrowright}}{cb}av+
  \overset{\overset{i}{\curvearrowright}}{ac}bv$$
  for any $a, b, c\in \chi$ and $v$ a finite word. Because the letters commute, we can write the elements of $W(\chi)$ with the pointed letter (or arrowed letters) at the start.\\
  Let the $\ccircled{x}$ and $\ccircled{g}_i$ products on $W(\chi)$ defined by:
  \begin{itemize}
    \item $\dot{a}v\ccircled{x}\dot{b}w=\dot{a}vbw$
    \item $\overset{\overset{i}{\curvearrowright}}{ab}v\ccircled{x}\dot{c}w=
    \overset{\overset{i}{\curvearrowright}}{ab}vcw$
    \item $\dot{a}v\ccircled{g}_i\dot{b}w=\overset{\overset{i}{\curvearrowright}}{ab}vw$
    \item $\overset{\overset{i}{\curvearrowright}}{ab}v\ccircled{g}_j\dot{c}w=
    \overset{\overset{i.j}{\curvearrowright}}{ab}vcw$
  \end{itemize}
  All other cases give $0$.
\end{Definition} 

\begin{Proposition}
  The algebra $(W_C(\chi), \ccircled{x}, \{ \ccircled{g}_i \} )$ is a $(\G^C)^!$-algebra generated by $\chi$.
\end{Proposition}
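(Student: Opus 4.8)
The plan is to follow the same strategy as the proof that $(W(\chi), \ccircled{x}, \ccircled{g})$ is a $(\greg')^!$-algebra, extending each step to keep track of the $V^*$-labels carried by the arrows. First I would establish that $\chi$ generates: every pointed word arises as $\dot{a}v = \dot{a}\,\ccircled{x}\,w$, where $w$ is $v$ with a dot placed on its first letter, and every arrowed word arises as $\overset{\overset{i}{\curvearrowright}}{ab}v = (\dot{a}\,\ccircled{g}_i\,\dot{b})\,\ccircled{x}\,w$; inducting on word length reduces everything to the single pointed letters, which are the images of $\chi$. I would then record the skew-symmetry of each $\ccircled{g}_i$, which on pointed generators is immediate from the arrow-reversal sign rule, $\dot{a}\,\ccircled{g}_i\,\dot{b}=\overset{\overset{i}{\curvearrowright}}{ab}=-\overset{\overset{i}{\curvearrowright}}{ba}=-\dot{b}\,\ccircled{g}_i\,\dot{a}$, exactly as in the undeformed case.

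It then remains to check the defining relations of $(\G^C)^!$. The relations involving only $x_*$ and a single $g^k_*$ go through as in the $(\greg')^!$ computation, since the label $e^*_k$ is merely transported along: associativity and permutativity of $\ccircled{x}$ hold because letters commute and the dot stays on the first letter; the relations $x_*\circ_1g^k_*-g^k_*\circ_2x_*$, $x_*\circ_1g^k_*-(g^k_*\circ_1x_*).(2\;3)$ and the vanishing relation $x_*\circ_2g^k_*$ reduce to case checks on the pointed/arrowed types of the arguments, the last because $\ccircled{x}$ returns $0$ whenever its second argument is arrowed. The three-term relation $x_*\circ_1g^k_*+(x_*\circ_1g^k_*).(1\;2\;3)+(x_*\circ_1g^k_*).(1\;3\;2)$ evaluates to $\overset{\overset{k}{\curvearrowright}}{ab}c + \overset{\overset{k}{\curvearrowright}}{bc}a + \overset{\overset{k}{\curvearrowright}}{ca}b$, which vanishes by the defining relation $\overset{\overset{k}{\curvearrowright}}{ab}cv = \overset{\overset{k}{\curvearrowright}}{cb}av + \overset{\overset{k}{\curvearrowright}}{ac}bv$ together with arrow reversal, just as before.

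The genuinely new point, which I expect to be the crux, is the relation $g^i_*\circ_1g^j_*-x_*\circ_1g_*^{i.j}$; it replaces the relation $g_*\circ_1g_*=0$ of $(\greg')^!$ and is precisely the reason the fourth product rule $\overset{\overset{i}{\curvearrowright}}{ab}v\,\ccircled{g}_j\,\dot{c}w = \overset{\overset{i.j}{\curvearrowright}}{ab}vcw$ is present. I would verify $g_i(g_j(p,q),r)=x(g_{i.j}(p,q),r)$ by running both sides on arbitrary words. When $p,q,r$ are pointed, the left side first builds $\overset{\overset{j}{\curvearrowright}}{ab}\cdots$ and then, via the fourth rule, an arrow labeled $\mu(e^*_j,e^*_i)$, whereas the right side produces an arrow labeled $\mu(e^*_i,e^*_j)$ directly; these agree by commutativity of $\mu$. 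When $p$ is already arrowed with label $\ell$, both sides yield an arrow whose label is a nested $\mu$-product of $\ell$ with $e^*_i$ and $e^*_j$, and the two nestings coincide by associativity of $\mu$; all remaining type combinations give $0$ on both sides. Thus the equality of the two operations rests exactly on $C^*=(V^*,\mu)$ being a commutative associative algebra, which is the essential input beyond the $(\greg')^!$ verification. Assembling these checks yields the claim.
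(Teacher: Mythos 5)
Your proposal is correct and follows essentially the same route as the paper: the same generation formulas $\dot{a}v=\dot{a}\ccircled{x}w$ and $\overset{\overset{i}{\curvearrowright}}{ab}v=(\dot{a}\ccircled{g}_i\dot{b})\ccircled{x}w$, the same skew-symmetry computation for the $\ccircled{g}_i$, followed by direct verification of the seven relations, which the paper simply declares ``easily checked.'' Your identification of the one genuinely new check --- that $g^i_*\circ_1 g^j_*-x_*\circ_1 g^{i.j}_*$ holds exactly because $C^*=(V^*,\mu)$ is a commutative associative algebra, via the fourth product rule --- correctly fleshes out the detail the paper leaves implicit.
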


\begin{proof}
  Indeed, $\dot{a}v=\dot{a}\ccircled{x}w$ with $w$ the word $v$ with a dot on a letter (let us say the first one for example) and 
  $$\overset{\overset{i}{\curvearrowright}}{ab}v=(\dot{a}\ccircled{g}_i\dot{b})\ccircled{x}w$$ 
  so $(W(\chi), \ccircled{x}, \{ \ccircled{g}_i\} )$ it is generated by $\chi$. The products $\ccircled{g}_i$ are skew-symmetric since $$\dot{a}\ccircled{g}_i\dot{b}=\overset{\overset{i}{\curvearrowright}}{ab}= \overset{\overset{i}{\curvearrowleft}}{ba}=-\overset{\overset{i}{\curvearrowright}}{ba}=-\dot{b}\ccircled{g}_i\dot{a}$$
  The $7$ relations of $(\G^C)^!$ are easily checked.
\end{proof}

\begin{Proposition}
  We have $\dim((\G^C)^!(4))\geq 4+3n$.
\end{Proposition}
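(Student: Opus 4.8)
The plan is to imitate the proof that $\dim((\greg')^!(4))\geq 7$, this time keeping track of the extra $V^*$-label carried by each arrow. By the preceding proposition $A=W_C(\{a,b,c,d\})$ is a $(\G^C)^!$-algebra generated by four elements, hence a quotient of the free $(\G^C)^!$-algebra on four generators. Since the multilinear component of that free algebra is exactly $(\G^C)^!(4)$, passing to multilinear parts gives a surjection and therefore $\dim((\G^C)^!(4))\geq\dim(\Mult(A(4)))$. It thus suffices to prove $\dim(\Mult(A(4)))=4+3n$.

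First I would describe $\Mult(A(4))$: its spanning words use each of $a,b,c,d$ exactly once and carry either a dot on a single letter or an arrow, labeled by an element of $V^*$, joining two letters. Using that letters commute and that reversing an arrow changes the sign, the dotted words reduce to the four words $\dot a bcd$, $a\dot b cd$, $ab\dot c d$, $abc\dot d$. For the arrowed words I would read the defining relation
$$\overset{\overset{i}{\curvearrowright}}{ab}cv=\overset{\overset{i}{\curvearrowright}}{cb}av+\overset{\overset{i}{\curvearrowright}}{ac}bv$$
as a rewriting rule $\overset{\overset{i}{\curvearrowright}}{\beta\gamma}\alpha\mapsto\overset{\overset{i}{\curvearrowright}}{\alpha\gamma}\beta-\overset{\overset{i}{\curvearrowright}}{\alpha\beta}\gamma$ normalizing every arrowed word so that its source is the lexicographically smallest letter. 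The confluence check is word-for-word the one in the unlabelled case, because the relation only rearranges the underlying letters while leaving $i$ fixed; concretely $\overset{\overset{i}{\curvearrowright}}{cd}ab$ reduces to $\overset{\overset{i}{\curvearrowright}}{ad}bc-\overset{\overset{i}{\curvearrowright}}{ac}bd$ for every $i$.

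Next I would list the normal forms. For each basis vector $e^*_i$ of $V^*$ the surviving arrowed words are $\overset{\overset{i}{\curvearrowright}}{ab}cd$, $\overset{\overset{i}{\curvearrowright}}{ac}bd$ and $\overset{\overset{i}{\curvearrowright}}{ad}bc$, giving $3n$ words; together with the four dotted words this is a spanning set of size $4+3n$. Since the associated rewriting system is terminating and confluent, its normal forms are linearly independent, so they form a basis of $\Mult(A(4))$ and $\dim(\Mult(A(4)))=4+3n$, whence the claimed bound.

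The only new point relative to the unlabelled computation is the bookkeeping of the arrow label, and I expect the sole mild obstacle to be verifying that arrows carrying distinct basis labels cannot be identified: the one relation touching labels is $V^*$-linearity of the arrow, under which the $n$ chosen labels are by definition independent, so the factor $n$ in the count is genuine rather than an artefact of over-counting.
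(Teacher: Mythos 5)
Your proposal is correct and follows essentially the same route as the paper: the same algebra $W_C(\{a,b,c,d\})$, the same passage to the multilinear part $\Mult(A(4))$ via the surjection from the free $(\G^C)^!$-algebra, the same rewriting rule with the identical confluence check on $\overset{\overset{i}{\curvearrowright}}{cd}ab$, and the same count of $4+3n$ normal forms. If anything, you are slightly more explicit than the paper on the one point it leaves implicit, namely that termination and confluence make the listed normal forms linearly independent (which is genuinely needed, since a mere spanning set would only give an upper bound on $\dim(\Mult(A(4)))$, not the lower bound on $\dim((\G^C)^!(4))$).
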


\begin{proof}
  Let $A=W(\{a, b, c, d\})$. Let us compute the dimension of $\Mult(A(4))$ the multilinear part of $A(4)$. Let us write words of $\Mult(A(4))$ such that the arrow always starts from $a$ and go to the second letter, and aside from that, the letters are in the lexicographic order.\\
  The rewriting rule $\overset{\overset{i}{\curvearrowright}}{\beta\gamma}\alpha\mapsto \overset{\overset{i}{\curvearrowright}}{\alpha\gamma}\beta- \overset{\overset{i}{\curvearrowright}}{\alpha\beta}\gamma$ is confluent. Indeed: 
  $$\overset{\overset{i}{\curvearrowright}}{cd}ab=
  \overset{\overset{i}{\curvearrowright}}{bd}ac-\overset{\overset{i}{\curvearrowright}}{bc}ad= (\overset{\overset{i}{\curvearrowright}}{ad}bc-\overset{\overset{i}{\curvearrowright}}{ab}cd)- (\overset{\overset{i}{\curvearrowright}}{ac}bd-\overset{\overset{i}{\curvearrowright}}{ab}cd)= \overset{\overset{i}{\curvearrowright}}{ad}bc-\overset{\overset{i}{\curvearrowright}}{ac}bd$$
  Hence, $\Mult(A(4))$ is spanned by the words $\dot{a}bcd$, $a\dot{b}cd$, $ab\dot{c}d$, $abc\dot{d}$, $\overset{\overset{i}{\curvearrowright}}{ab}cd$, $\overset{\overset{i}{\curvearrowright}}{ac}bd$ and $\overset{\overset{i}{\curvearrowright}}{ad}bc$. Since $A$ is a $(\G^C)^!$-algebra on $4$ generators, \hbox{$\dim((\G^C)^!(4))\geq\dim(\Mult(A(4)))=4+3n$}.
\end{proof}

\begin{Remark}
  The algebra $(W(\chi), \ccircled{x}, \{ \ccircled{g}_i\} )$ is in fact the free $(\G^C)^!$-algebra generated by $\chi$. 
\end{Remark}

Let us consider the three following orders to get Gröbner bases:
\begin{itemize}
  \item the degree-lexicographic permutation order with $x_*>y_*>g_*$ gives Figure \ref{fg:StGBg1};
  \item the weighted permutation reverse-degree-lexicographic order with $g_*>y_*>x_*$ and $g_*$
  of degree $1$ gives Figure \ref{fg:StGBg2};
  \item and the reverse-degree-lexicographic permutation order with $x_*>y_*>g_*$ gives Figure \ref{fg:StGBg3}.
\end{itemize}

\begin{Proposition}
  The rewriting systems displayed in Figure \ref{fg:StGBg1}, Figure \ref{fg:StGBg2} and Figure \ref{fg:StGBg3} are confluent. They are in fact Gröbner bases.
\end{Proposition}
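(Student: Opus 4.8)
The plan is to follow the strategy already used for Proposition \ref{Proposition:GB}, replacing the count $7$ by the bound $4+3n$ established just above. First I would record that each of the three orders listed---the degree-lexicographic permutation order, the weighted permutation reverse-degree-lexicographic order (with $g_*$ of degree $1$), and the reverse-degree-lexicographic permutation order---is an admissible monomial order on the shuffle operad freely generated by $x_*$, $y_*=x_*.(1\;2)$ and the $g^k_*$. In particular each associated rewriting system terminates, so every element has a well-defined set of normal forms. The rules displayed in Figures \ref{fg:StGBg1}, \ref{fg:StGBg2} and \ref{fg:StGBg3} are obtained by orienting the relations of $(\G^C)^!$ according to the chosen order, so they lie in the defining ideal; it remains only to check that they form a complete Gröbner basis, equivalently that the induced rewriting system is confluent.

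Next I would use that the presentation is binary and quadratic, so that all overlaps of leading monomials occur in arity $4$: a quadratic relation lives on three leaves, and any ambiguity between two quadratic leading terms is already resolved on trees with four leaves. Hence, by the diamond lemma for shuffle operads (see \cite{AlgComp}), global confluence follows from confluence in arity $4$, and it suffices to analyse that single arity.

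The key step is then to count the normal forms in arity $4$ for each system and to match the count against the lower bound $\dim((\G^C)^!(4))\ge 4+3n$. In each of the three cases the normal monomials are, exactly as for $n=1$ in Proposition \ref{Proposition:GB}, the four monomials built only from $x_*$ together with the arrow-type monomials, the latter now occurring in $n$ colours corresponding to the $n$ generators $g^k_*$; this yields precisely $4+3n$ normal forms. Since normal forms always span the corresponding component, we obtain $\dim((\G^C)^!(4))\le 4+3n$, and combining with the lower bound gives $\dim((\G^C)^!(4))=4+3n$. Consequently the $4+3n$ normal monomials are linearly independent: were some critical pair to reduce to two distinct normal forms, their difference would be a nontrivial relation among the normal monomials, forcing the dimension below $4+3n$, a contradiction. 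Hence each rewriting system is confluent in arity $4$, therefore globally confluent, and the displayed rules form a Gröbner basis.

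I expect the main obstacle to be the bookkeeping in the arity-$4$ normal-form count, which must be carried out separately for all three orders and now depends on the parameter $n$ through the $n$ colours of the generators $g^k_*$; one must also verify at the outset that the weighted order used for Figure \ref{fg:StGBg2} is genuinely admissible with the chosen weighting of $g_*$, since there the relation $g^i_*\circ_1 g^j_*-x_*\circ_1 g_*^{i.j}$ can only be oriented with $g^i_*\circ_1 g^j_*$ as leading term once $g_*$ is assigned positive weight. Alternatively one could check confluence of all critical monomials directly, but the number of such checks grows with $n$, so the dimension argument above is decidedly preferable.
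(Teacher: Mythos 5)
Your proposal is correct and follows essentially the same route as the paper: orient the relations by the three monomial orders, observe that for quadratic rules over binary generators all critical monomials live in arity $4$, count the $4+3n$ normal forms there, and play this against the lower bound $\dim((\G^C)^!(4))\geq 4+3n$ obtained from the algebra $W_C(\chi)$ to force confluence. The extra details you supply (admissibility of the weighted order orienting $g^i_*\circ_1 g^j_*-x_*\circ_1 g_*^{i.j}$, and the linear-independence argument ruling out a bad critical pair) are exactly the implicit content of the paper's shorter proof.
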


\begin{proof}
  As in Proposition \ref{Proposition:GB}, remarking that there are $4+3m$ normal forms in arity $4$ for each rewriting system and that $\dim((\G^C)^!(4))\geq 4+3n$ is enough. Since we have a monomial order and the rewriting rules are quadratic in a binary operad, checking arity $4$ is enough.
\end{proof}

\begin{Proposition}
  We have that $\dim((\G^C)^!(m))=(n+1)m-n$ for all $m\geq 1$, hence its exponential generating series is $((n+1)t-n)\exp(t)+n$.
\end{Proposition}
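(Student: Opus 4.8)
The plan is to count the normal forms of one of the Gröbner bases established above, say the rewriting system of Figure \ref{fg:StGBg1}, exactly as in the proof that $\dim((\greg')^!(n)) = 2n - 1$; the present statement is the $n$-colored refinement of that computation. In a binary operad the arity-$m$ component is spanned by shuffle trees with $m - 1$ internal vertices, and the normal forms are precisely the shuffle monomials avoiding all the leading terms of the Gröbner basis. So the whole argument reduces to reading off the admissible shapes from Figure \ref{fg:StGBg1} and enumerating them.

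First I would recall the shape of the normal forms in the uncolored case: they are right combs whose spine carries, from the root upward, a (possibly empty) block of $y_*$'s, then at most one $g$-vertex, then a (possibly empty) block of $x_*$'s. I expect the same description to persist here, the only change being that the single $g$-vertex may now wear any of the $n$ colors $g^1_*, \dots, g^n_*$. The relations responsible for this shape are the vanishing relation $x_* \circ_2 g^k_*$ together with the quadratic relation $g^i_* \circ_1 g^j_* - x_* \circ_1 g_*^{i.j}$: the former forbids a $g$ in the relevant branch below an $x_*$, while the latter rewrites any composite of two $g$-vertices into a monomial with strictly fewer $g$-vertices (a single $g$ of color $i.j$ sitting above an $x_*$). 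Together they force at most one $g$-vertex in each normal form, irrespective of colors.

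Granting this description, the count is immediate. A normal form with no $g$-vertex is determined by how many of its $m - 1$ spine vertices are $x_*$ (the rest being $y_*$), giving the $m$ possibilities $0, 1, \dots, m-1$. A normal form with exactly one $g$-vertex is determined by the color $k \in \{1, \dots, n\}$ of that vertex together with the number of $x_*$'s among the remaining $m - 2$ spine vertices, giving $n(m-1)$ possibilities. Hence the total is $m + n(m - 1) = (n+1)m - n$. The generating series then follows by the routine summation
\[
\sum_{m \geq 1} \big((n+1)m - n\big)\frac{t^m}{m!} = (n+1)t\,e^t - n\,(e^t - 1) = \big((n+1)t - n\big)e^t + n.
\]

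The main obstacle I anticipate is the middle step: verifying carefully, from the leading monomials listed in Figure \ref{fg:StGBg1}, that two $g$-vertices can never coexist in a normal form and that the colors do not disturb the comb structure. This is essentially bookkeeping with leading terms, and it is controlled by the fact---already recorded above---that the Gröbner basis is confluent with exactly $4 + 3n$ normal forms in arity $4$; this pins down the arity-$4$ picture and, since the relations are quadratic in a binary operad, propagates to all arities.
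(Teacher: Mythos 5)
Your proposal is correct and follows essentially the same route as the paper: the paper's proof also counts the normal forms of the rewriting system of Figure \ref{fg:StGBg1}, describes them as right combs with all $y_*$ below, at most one $g^k_*$ (in any of the $n$ colors), and all $x_*$ above, and arrives at the same count $m + n(m-1) = (n+1)m - n$. Your added remarks on which leading terms enforce the comb shape and on the arity-$4$ dimension check matching the confluence argument are consistent with how the paper justifies the preceding Gr\"obner basis proposition.
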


\begin{proof}
  It suffices to count the normal forms of a rewriting system for example Figure \ref{fg:StGBg1}. Let $m\geq 2$ and count the number of normal forms in arity $m$. Those are right combs with at most one $g^k_*$, with all the $x_*$ above the $g^k_*$ and the $y_*$, and all the $y_*$ below the $g^k_*$ and the $x_*$. Hence, the normal forms are determined by the number of occurrences $x_*$ and $g^k_*$, and have either zero or one occurrence $g^k_*$. If there is no $g^k_*$, then one can have from $0$ to $m-1$ occurrences \hbox{of $x_*$.} If there is one $g^k_*$, then one can have from $0$ to $m-2$ occurrences of $x_*$ and $n$ choice for the $g^k_*$ that appears. Hence, the number of normal forms in arity $m$ is $m+n(m-1)=(n+1)m-n$.
\end{proof}

\begin{Theorem}
  The operad $\G^C$ is Koszul.
\end{Theorem}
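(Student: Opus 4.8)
The plan is to reproduce, in the generalized setting, the very argument already used to establish that $\greg'$ is Koszul: transfer the question to the Koszul dual $(\G^C)^!$, exhibit a quadratic Gröbner basis there, and then appeal to two standard facts from the theory of operads.

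First I would recall the two inputs. The first is that an operad admitting a quadratic Gröbner basis, in the sense of shuffle operads \cite{AlgComp}, is Koszul; the second is that a quadratic operad is Koszul if and only if its Koszul dual is Koszul, see \cite{AlgOp}. The strategy is thus to work entirely with $(\G^C)^!$, whose defining relations are visibly quadratic and whose Gröbner bases have already been computed in the preceding propositions.

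Next I would invoke the Proposition asserting that the rewriting systems of Figure \ref{fg:StGBg1}, Figure \ref{fg:StGBg2} and Figure \ref{fg:StGBg3} are confluent and are genuine Gröbner bases for $(\G^C)^!$. Since all the leading monomials involved are quadratic, this is a quadratic Gröbner basis, so $(\G^C)^!$ is Koszul. Applying Koszul duality then yields that $\G^C$ is Koszul, which is the claim.

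The substance of the argument lies entirely in the already-proven confluence, so the genuinely delicate point — the one I expect to be the main obstacle — is not in the present statement but in the verification underpinning that cited Proposition: namely that the number of normal forms in arity $4$ equals $4+3n$, matching the lower bound $\dim((\G^C)^!(4))\ge 4+3n$ obtained from the free algebra $W_C(\chi)$. It is precisely this coincidence of dimensions that certifies that no relation beyond the quadratic ones can appear, so that checking arity $4$ suffices and the Gröbner basis is complete. Granting that bookkeeping, the Theorem follows immediately from the two formal facts above.
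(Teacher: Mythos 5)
Your proposal is correct and follows exactly the paper's argument: the quadratic Gr\"obner basis for $(\G^C)^!$ established in the preceding proposition shows $(\G^C)^!$ is Koszul, and Koszul duality transfers this to $\G^C$. You also rightly locate the real content in the dimension count $\dim((\G^C)^!(4))\geq 4+3n$ certifying completeness of the quadratic rewriting system, which is precisely where the paper places it.
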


\begin{proof}
  We have a quadratic Gröbner basis for $(\G^C)^!$, hence $(\G^C)^!$ is Koszul, hence $\G^C$ is Koszul.
\end{proof}

\begin{Proposition}\label{Proposition:egsgc}
  The exponential generating series of $\greg^C$ verifies:
  $$ f_{\greg^C}=t\exp(f_{\greg^C})+n(\exp(f_{\greg^C})-f_{\greg^C}-1)$$
\end{Proposition}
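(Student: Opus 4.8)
The plan is to recognize $f_{\greg^C}$ as the exponential generating series of the species $\G^V$ of rooted Greg trees over $V$, and to derive the functional equation by decomposing such a tree according to the nature of its root.

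First I would spell out what $f_{\greg^C}$ counts. By definition $\dim\G^V(m)=\sum_k |G_k(m)|\,n^k$, where $|G_k(m)|$ is the number of rooted Greg trees with $m$ labelled white vertices and $k$ unlabelled black vertices, and $n=\dim V$: each black vertex may be decorated by any of the $n$ basis elements of $V$, so it carries a weight $n$. Hence $f_{\greg^C}(t)=\sum_{m\geq 1}\dim\G^V(m)\,t^m/m!$ is the series in which the variable $t$ marks white vertices (the arity) while each black vertex contributes a factor $n$.

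Next I would decompose a rooted Greg tree over $V$ by removing its root and looking at the forest of subtrees hanging from it; each subtree is again a rooted Greg tree over $V$, and this set of subtrees is unordered, so at the level of exponential generating series a set of such subtrees contributes $\exp(f_{\greg^C})$. There are two cases. If the root is white, it consumes one label (a factor $t$) and may carry any finite set of subtrees, giving the contribution $t\exp(f_{\greg^C})$, which is the usual species identity $X\cdot E$ for rooted labelled trees. If the root is black, it carries a weight $n$ and no label, but by definition it must have at least two children; the exponential generating series of a set of size at least two of $f_{\greg^C}$-structures is $\exp(f_{\greg^C})-f_{\greg^C}-1$ (remove the empty set and the singletons), so this case contributes $n(\exp(f_{\greg^C})-f_{\greg^C}-1)$.

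Summing the two contributions yields
$$f_{\greg^C}=t\exp(f_{\greg^C})+n(\exp(f_{\greg^C})-f_{\greg^C}-1),$$
which is the claimed identity. The only point requiring care is the bookkeeping of the species/EGS dictionary: checking that $t$ tracks exactly the white vertices, that the weight $n$ accounts for the $V$-decoration of each black vertex, and that the ``at least two children'' constraint on black vertices is faithfully reflected by excluding sets of size $0$ and $1$. As a sanity check, specializing to $n=1$ gives $t=(2f+1)\exp(-f)-1$, in agreement with the earlier computation of $f_\G$ as the compositional inverse of $(2t+1)\exp(-t)-1$.
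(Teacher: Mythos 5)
Your proof is correct and follows essentially the same route as the paper, which writes the root-decomposition as the species identity $\G^V = X\cdot E(\G^V) + nE_{\geq 2}(\G^V)$ and translates it directly into the stated equation for exponential generating series. Your bookkeeping of the factor $n$ for black-vertex decorations, the exclusion of sets of size $0$ and $1$ for black roots, and the $n=1$ sanity check against the compositional inverse of $(2t+1)\exp(-t)-1$ are all accurate.
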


\begin{proof}
  An inspection of the species $\G^V$ which is the species of rooted Greg trees such that the black vertices are labeled by $\{e_1, \dots, e_n\}$ shows that: 
  $$ \G^V=X\cdot E(\G^V) + nE_{\geq2}(\G^V)$$
  With the usual notation of species, $X$ is the singleton species, $E$ is the species of sets and $E_{\geq2}$ is the species of sets with at least two elements.\\
  The above equation means that a rooted Greg tree is either a white vertex and a set of rooted Greg trees connected to it, or a black vertex labeled by $e_k$ (so $n$ possibilities) and a set of at least $2$ rooted Greg trees connected to it.\\
  Since $\G^V$ is the underlying species of $\greg^C$, we have that:
  $$ f_{\greg^C}=t\exp(f_{\greg^C})+n(\exp(f_{\greg^C})-f_{\greg^C}-1)$$
\end{proof}

\begin{Remark}
  We can recover the recursive formula enumerating the rooted Greg trees from \cite[Proposition 2.1]{GenGreg} by resolving a differential equation:\\
  We have $h=((n+1)t+n)\exp(-t)-n$. Hence: 
  \begin{itemize}
    \item $d_1h=-((n+1)t-1)\exp(-t)$, 
    \item $d_2h=(t+1)\exp(-t)-1$.
  \end{itemize} 
  Hence:
  $$(z+2)h + d_1h -(z+1)^2d_2h=1 $$
  Let $f$ be such that $h(f(t, n), n)=h\circ(f, \id)=t$. We have that $d_1h\circ(f, \id).d_1f=1$ and\\
  $d_1h\circ(f, \id).d_2f+d_2h\circ(f, \id)=0$, hence:
  $$((z+2)t-1)d_1f+(z+1)^2d_2f=-1$$
  Let $f(t, n)=\sum\frac{g_k(n)}{k!}t^k$ with $g_k$ polynomials in $n$, we get the following recursive relation:
  \begin{itemize}
    \item $g_1(n)=1$
    \item $g_{k+1}(n)=(n+2)kg_k(n)+(n+1)^2g_k'(n)$\\
  \end{itemize}
\end{Remark}

\begin{Theorem}
  The operad $\G^C$ is isomorphic to $\greg^C$.
\end{Theorem}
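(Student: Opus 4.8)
The plan is to mirror the argument that identified $\greg'$ with $\greg$: first produce a surjection $\G^C \twoheadrightarrow \greg^C$, then show that both operads have the same exponential generating series, and finally conclude that the surjection must be an isomorphism, since a surjective morphism of species that is an equality of (finite) dimensions in each arity is necessarily bijective.

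First I would record the surjection. The preceding Proposition shows that $\greg^C$ is binary, generated by $x$, $y=x.(1\;2)$ and the $g^k$, and that these generators satisfy exactly the pre-Lie relation and the relation (\ref{eq:gregDelta}) — which are precisely the defining relations of $\G^C$ under the identification $\tx\mapsto x$, $\tg^k\mapsto g^k$. Hence the canonical map from the free operad on $\tx,\tg^1,\dots,\tg^n$ factors through $\G^C$ and lands onto $\greg^C$, giving a surjective morphism of operads $\pi\colon \G^C \twoheadrightarrow \greg^C$.

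Next I would compute the two generating series and check that they agree. On the one hand, $\G^C$ is Koszul, so exactly as in the proof that $f_{\greg'}$ is the compositional inverse of $(2t+1)\exp(-t)-1$, Koszulness of $\G^C$ together with the computed value $f_{(\G^C)^!}(t)=((n+1)t-n)\exp(t)+n$ shows that $f_{\G^C}$ is the inverse under composition of
\[
h(t):=-f_{(\G^C)^!}(-t)=((n+1)t+n)\exp(-t)-n.
\]
On the other hand, Proposition \ref{Proposition:egsgc} gives the implicit equation $f=t\exp(f)+n(\exp(f)-f-1)$ for $f=f_{\greg^C}$; solving it for $t$ yields $(n+1)f+n=(t+n)\exp(f)$ and hence $t=((n+1)f+n)\exp(-f)-n=h(f)$, which says precisely that $f_{\greg^C}$ is the inverse under composition of the same series $h$. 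Therefore $f_{\G^C}=f_{\greg^C}$, i.e.\ $\dim \G^C(m)=\dim\greg^C(m)$ for every arity $m$.

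Finally, since $\pi$ is surjective and the source and target have equal finite dimensions in each arity, $\pi$ is an isomorphism. The only delicate point is the bookkeeping in the generating-series step: matching the sign conventions of the Koszul relation for $\G^C$ with the implicit functional equation of Proposition \ref{Proposition:egsgc}, and recognising both descriptions as the compositional inverse of $h$. Once this identification is in place, the passage from equal dimensions plus a surjection to an isomorphism is formal.
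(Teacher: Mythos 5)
Your proposal is correct and follows essentially the same route as the paper: the surjection $\G^C\twoheadrightarrow\greg^C$ from the presentation, the identification of $f_{\G^C}$ as the compositional inverse of $h(t)=((n+1)t+n)\exp(-t)-n$ via Koszulness and the computed dual dimensions, the matching with the functional equation of Proposition \ref{Proposition:egsgc} (you solve the implicit equation for $t$, the paper substitutes in the other direction, but the algebra is identical), and the formal conclusion from surjectivity plus arity-wise equality of finite dimensions.
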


\begin{proof}
  We know that $\dim((\G^C)^!(m))=(n+1)m-n$, hence its exponential generating series is $f_{(\G^C)^!}=((n+1)t-n)\exp(t)+n$. Let $h(t, n)=-f_{(\G^C)^!}(-t)=((n+1)t+n)\exp(-t)-n$, since $\G^C$ is Koszul, we know that $h(f_{\G^C}(t, n), n)=t$. Hence:
  $$ t=((n+1)f_{\G^C}+n)\exp(-f_{\G^C})-n$$
  Hence:
  $$ f_{\G^C}=t\exp(f_{\G^C})+n(\exp(f_{\G^C})-f_{\G^C}-1)$$
  Which shows that $f_{\G^C}=f_{\greg^C}$. Since we have a surjective morphism from $\G^C$ to $\greg^C$ and equality of dimensions of components, we have that $\G^C$ is isomorphic to $\greg^C$.
\end{proof} 

\begin{Corollary}\label{Theorem:m2}
  The operad $\greg^C$ is binary, quadratic and Koszul.
\end{Corollary}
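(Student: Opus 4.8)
The plan is to assemble the results established in this and the preceding section, mirroring exactly the strategy that yielded Corollary~\ref{Theorem:m1} for $\greg$. Three ingredients are already in place. First, the Proposition establishing that $\greg^C$ is binary shows, by induction on arity, that $\greg^C$ is generated by the binary elements $x$, $y$ and $g^1, \dots, g^n$. Second, the operad $\G^C$ is quadratic by its very definition, since it is presented by generators of arity $2$ and quadratic relations. Third, the Theorem above provides an isomorphism $\G^C \simeq \greg^C$.

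First I would observe that quadraticity transports along this isomorphism: because $\G^C$ admits a quadratic presentation and $\G^C \simeq \greg^C$, the operad $\greg^C$ is quadratic. Next, Koszulness is an invariant of the isomorphism class of an operad, so from the Theorem asserting that $\G^C$ is Koszul — itself obtained from the quadratic Gröbner basis computed for the Koszul dual $(\G^C)^!$ — together with $\G^C \simeq \greg^C$, one concludes that $\greg^C$ is Koszul. Combining binarity, quadraticity and Koszulness gives the statement.

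The genuinely substantial work lies upstream and has already been carried out, so the corollary itself presents no real obstacle; it is a purely formal consequence. The serious steps are the dimension count $\dim((\G^C)^!(m)) = (n+1)m - n$ extracted from the normal forms of the Gröbner basis, the functional equation for the exponential generating series of $\greg^C$ coming from the species decomposition $\G^V = X\cdot E(\G^V) + nE_{\geq 2}(\G^V)$ in Proposition~\ref{Proposition:egsgc}, and the Koszul relation $f_\P(f_{\P^!}(-t)) = -t$, which forces $f_{\G^C} = f_{\greg^C}$ and thereby upgrades the surjection $\G^C \twoheadrightarrow \greg^C$ to an isomorphism. The only point demanding care in the assembly is ensuring that the surjection whose source is the abstractly presented operad $\G^C$ and whose target is the concrete operad $\greg^C$ is the one along which Koszulness and quadraticity are pulled back in the correct direction; once the isomorphism is in hand this distinction evaporates.
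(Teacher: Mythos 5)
Your proposal is correct and follows exactly the paper's route: the corollary is the formal assembly of the preceding results, namely that $\greg^C$ is binary with the quadratic pre-Lie and (greg $\Delta$) relations, that $\G^C$ is Koszul via the quadratic Gr\"obner basis for $(\G^C)^!$, and that the surjection $\G^C\twoheadrightarrow\greg^C$ is an isomorphism by the generating-series comparison, so quadraticity and Koszulness transport to $\greg^C$. You correctly identify that all the substantive work lies upstream, which is precisely why the paper states the corollary without further proof.
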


\begin{Definition}
  Let $\greg_n$ be the operad $\greg^{(V, 0)}$, with $0$ the trivial coproduct on $V$.
\end{Definition}

\begin{Corollary}
  The operad $\bigvee_\Lie^{n+1}\PL$ is isomorphic to $\greg^{(V, \Delta_{\max})}$ with:
  $$\Delta_{\max}:e_k\mapsto \sum_{i, j\vert \max(i, j)=k}e_i\otimes e_j$$
  Moreover, $\bigvee_\Lie^{n+1}\PL$ is filtered by the grading of the rooted Greg trees by the number of black vertices. The associated graded operad is $\greg_n$.
\end{Corollary}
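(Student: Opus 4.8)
The plan is to establish the two assertions separately: first the isomorphism $\bigvee_\Lie^{n+1}\PL\cong\greg^{(V,\Delta_{\max})}$, then the filtration statement. For the isomorphism I would begin by checking that $(V,\Delta_{\max})$ is a legitimate coalgebra, so that $\greg^{(V,\Delta_{\max})}$ is even defined. Cocommutativity is immediate, since the condition $\max(i,j)=k$ is symmetric in $i$ and $j$. For coassociativity I would compute both iterated coproducts $(\Delta_{\max}\otimes\id)\Delta_{\max}(e_k)$ and $(\id\otimes\Delta_{\max})\Delta_{\max}(e_k)$ and observe that each equals $\sum e_a\otimes e_b\otimes e_c$ summed over all triples with $\max(a,b,c)=k$; dually this says $C^*=(V^*,\mu)$ is the commutative algebra with $e_i^*\cdot e_j^*=e_{\max(i,j)}^*$, the algebra of the max-semilattice, which is manifestly associative.

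With $(V,\Delta_{\max})$ in hand, the isomorphism reduces to a comparison of quadratic presentations. By the Proposition giving the presentation of $\greg^C$, together with the Theorem identifying $\greg^C$ with its quadratic model $\G^C$, the operad $\greg^{(V,\Delta_{\max})}$ is generated by $x$ and the symmetric $g^k$ modulo the pre-Lie relation and the relation (greg $\Delta$), whose deformation term is $g^k_{(1)}\circ_1 g^k_{(2)} - (g^k_{(1)}\circ_1 g^k_{(2)}).(2\;3)$. Substituting $\Delta_{\max}(g^k)=\sum_{\max(i,j)=k} g^i\otimes g^j$ turns this term into $\sum_{\max(i,j)=k}\bigl(g^i\circ_1 g^j - (g^i\circ_1 g^j).(2\;3)\bigr)$, which is exactly the deformation term of the relation (diff pre-Lie) in the quadratic presentation of $\bigvee_\Lie^{n+1}\PL$. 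Hence the assignment $x\mapsto x$, $g^k\mapsto c_k$ matches generators and relations on the nose and yields the isomorphism.

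For the filtration statement I would grade the underlying species $\G^V$ by the weight $w$, the number of black vertices, writing $\G^V=\bigoplus_w\G^V_w$. The structural point is that the deformed fall product $\star^\Delta$ splits into its two defining terms: the first (falling onto a vertex) preserves the weight, while the second (splitting a black vertex via $\Delta$) raises it by exactly one. Propagating this through the recursive definition of the brace $Br^\Delta$ and then through $\circ_i^\Delta$, one sees that for homogeneous $a,b$ every term of $a\circ_i^\Delta b$ has weight at least $\mathrm{weight}(a)+\mathrm{weight}(b)$, with the minimal-weight component equal to the undeformed composition $a\circ_i^{0}b$. I would therefore introduce the decreasing filtration $F^p=\bigoplus_{w\ge p}\G^V_w$, verify that $\circ_i^\Delta\colon F^p\otimes F^q\to F^{p+q}$, and identify the induced composition on $\gr=\bigoplus_p F^p/F^{p+1}$ with the lowest-weight part $\circ_i^0$. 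Since $\greg^{(V,0)}=\greg_n$ is precisely the operad whose composition is $\circ_i^0$ (the choice $\Delta=0$ kills the splitting term), this gives $\gr\bigl(\bigvee_\Lie^{n+1}\PL\bigr)\cong\greg_n$.

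The main obstacle is the bookkeeping in the second part: one must verify that tracing the two-term decomposition of $\star^\Delta$ through the recursively defined brace product really produces only weight-preserving and strictly weight-increasing contributions, and that the weight-preserving part coincides \emph{termwise} with the $\Delta=0$ composition, not merely in total weight. Once the decomposition $\star^\Delta=(\text{weight-preserving})+(\text{weight-raising})$ is pinned down, compatibility with the filtration and the identification of the associated graded follow formally.
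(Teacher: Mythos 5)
Your proposal is correct and follows essentially the route the paper intends: the paper states this as an immediate corollary of its Proposition presenting $\bigvee_\Lie^{n+1}\PL$ by generators $x, c_1,\dots,c_n$ with the relations (pre-Lie) and (diff pre-Lie), together with the Theorem $\G^C\cong\greg^C$, and your substitution of $\Delta_{\max}$ into the relation (greg $\Delta$) reproduces exactly that comparison of quadratic presentations (your verification that $\Delta_{\max}$ is dual to the max-semilattice algebra is a worthwhile detail the paper leaves tacit). Your treatment of the filtration — decomposing $\star^\Delta$ into the weight-preserving undeformed term and the strictly weight-raising splitting term, propagating this through $Br^\Delta$ and $\circ_i^\Delta$, and identifying the associated graded composition with the $\Delta=0$ composition of $\greg_n$ — is likewise the argument implicit in the construction of $\greg^C$.
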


\begin{Corollary}
  The operad $\bigvee_\Lie^{n+1}\PL$ is Koszul.
\end{Corollary}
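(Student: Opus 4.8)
The plan is to deduce Koszulness from the explicit identification of $\bigvee_\Lie^{n+1}\PL$ with a member of the family $\greg^C$, for which Koszulness has already been established in full generality. By the preceding corollary we have an isomorphism of operads $\bigvee_\Lie^{n+1}\PL \cong \greg^{(V, \Delta_{\max})}$, so it suffices to check that $(V, \Delta_{\max})$ falls within the scope of Corollary \ref{Theorem:m2} and then invoke it.

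First I would verify that $\Delta_{\max}$ is a genuine coassociative, cocommutative coproduct on the finite-dimensional space $V$, since this is the standing hypothesis under which $\greg^C$ was defined and shown to be Koszul. Cocommutativity is immediate from the symmetry of $\max$ in its two arguments. For coassociativity, a direct computation shows that both $(\Delta_{\max}\otimes\id)\Delta_{\max}(e_k)$ and $(\id\otimes\Delta_{\max})\Delta_{\max}(e_k)$ equal $\sum_{a,b,c\mid \max(a,b,c)=k} e_a\otimes e_b\otimes e_c$, using the associativity of $\max$; this is essentially the only computation the argument requires.

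With $(V,\Delta_{\max})$ recognized as an admissible coalgebra $C$, Corollary \ref{Theorem:m2} gives that $\greg^{(V,\Delta_{\max})}$ is Koszul. Since Koszulness is invariant under isomorphism of operads, the isomorphism $\bigvee_\Lie^{n+1}\PL \cong \greg^{(V,\Delta_{\max})}$ transports this property, and I would conclude that $\bigvee_\Lie^{n+1}\PL$ is Koszul.

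An alternative route, more in keeping with the deformation-theoretic viewpoint stressed throughout, uses the filtration by number of black vertices recorded in the preceding corollary: its associated graded is $\greg_n = \greg^{(V,0)}$, which is Koszul by Corollary \ref{Theorem:m2} applied to the trivial coproduct, and one may then appeal to the standard principle that Koszulness of the associated graded operad implies Koszulness of the filtered operad. Either way, there is no genuine obstacle here: the substance of the result lies in Corollary \ref{Theorem:m2} and in the identification of the coproduct, and the only point demanding any care is the verification that $\Delta_{\max}$ is coassociative.
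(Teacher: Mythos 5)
Your proof is correct and follows the same route as the paper, which deduces this corollary directly from Corollary \ref{Theorem:m2} applied to the coalgebra $(V,\Delta_{\max})$ via the isomorphism $\bigvee_\Lie^{n+1}\PL\cong\greg^{(V,\Delta_{\max})}$; your explicit check that $\Delta_{\max}$ is coassociative and cocommutative is left implicit in the paper but is exactly the right point to verify.
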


\begin{Remark}\label{Remark:PT}
  This fact is not a direct consequence of the definition of $\bigvee_\Lie^{m+1}\PL$ as a coproduct. Indeed, the fiber coproduct of two Koszul operads $\P$ and $\Q$ over a Koszul operad $\R$ is not necessarily Koszul. Take for instance the operads $\bigvee_\Lie^2\Ass$ and $\bigvee_\Lie^2\Pois$ which are not Koszul, it can be checked by comparing the exponential generating series of those operads and of their Koszul dual. Worst, freeness as right $\R$-modules of $\P$ and $\Q$ does not solve the issue as shown by the example $\bigvee_\Lie^2\Pois$. It seems that freeness as left modules solve this issue. Indeed, for instance the operads $\bigvee_{\Com}^2\Pois$ and $\bigvee_{\Com}^2\Zin$ are Koszul. However, the author does not know how to prove that left freeness ensure that Koszulness is preserved. Left and right freeness are defined at the very beginning of the next section.
\end{Remark}

\section{Nielsen-Schreier and freeness properties}\label{seq:free}

We have seen that $\greg^C$ is Koszul using quadratic Gröbner bases. However, one Gröbner basis was enough to show this fact. Three different Gröbner bases were computed with particular normal forms. Indeed, theorems from Dotsenko \cite{Free} and Dotsenko and Umirbaev \cite{NSPrp} allow us to show some freeness properties using Gröbner bases. Let us recall those freeness properties and show that they hold for $\greg^C$.
\paragraph{}
Let $\P$ an operad. 
\begin{itemize}
  \item A \emph{left module} $L$ over $\P$ is a species $L$ with a morphism
$\P\circ L\to L$ satisfying the usual axioms.
  \item A \emph{right module} $R$ over $\P$ is a species $R$ with a morphism
  $R\circ \P\to R$ satisfying the usual axioms.
  \item A \emph{bimodule} over $\P$ is a left and right module over $\P$ such that
the two structures commute.
\end{itemize}
Let $\Q$ be an operad such that we have a morphism of operads $\P\to\Q$, then $\Q$ has a canonical structure of a left and a right module over $\P$. (It has in fact a structure of bimodule over $\P$.) A left module $L$ (resp. right module $R$) over $\P$ is said to be \emph{free} if $L$ (resp. $R$) is isomorphic to $\P\circ \X$ (resp. $\X\circ \P$) for some species $\X$ and that the module structure is given by the operadic composition in $\P$. In this context, the $\circ$ is the plethysm of species, which can be interpreted as the composition of Schur functors. We refer to \cite{Species} for more details on the plethysm of species.
\paragraph{}
Let $\F(E)/(R)$ and $\F(E\sqcup F)/(R\sqcup S)$ be presentations of the operads $\P$ and $\Q$ respectively, such that $R\sqcup S$ is a Gröbner basis for some monomial order and $R$ is a Gröbner basis once the monomial order is restricted to shuffle trees over $E$. Let recall the following theorems:

\begin{Theorem}[\emph{left freeness version}] \cite[Theorem 4]{Free}
  Assume that the root of the leading terms of $S$ are elements of $F$. Then $\Q$ is free as left $\P$-module.
\end{Theorem}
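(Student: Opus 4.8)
The plan is to prove the isomorphism at the level of monomial bases, exploiting both Gröbner basis hypotheses. Since $R\sqcup S$ is a Gröbner basis, the operad $\Q$ has a linear basis indexed by the \emph{normal monomials}: the tree monomials over the alphabet $E\sqcup F$ that contain no leading term of $R\sqcup S$ as a divisor. Likewise, since $R$ is a Gröbner basis once the order is restricted to trees over $E$, the operad $\P$ has a basis indexed by the normal monomials over $E$. I would introduce as candidate for the species $\X$ the subspecies of $\Q$ spanned by the operadic unit together with all normal monomials whose root vertex is labelled by an element of $F$. The goal then becomes to show that the left module structure map $\Phi\colon \P\circ\X\to\Q$, given by the $\P$-action, is an isomorphism, and for this it suffices to prove it is a bijection on the two monomial bases.

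The map $\Phi$ sends a basis element of $\P\circ\X$ --- a normal $\P$-monomial $\beta$ with $\X$-basis elements $\alpha_1,\dots,\alpha_k$ grafted at its leaves --- to the grafting $\beta(\alpha_1,\dots,\alpha_k)$. First I would establish surjectivity by a \emph{decomposition} argument: given any normal monomial $\tau$ of $\Q$, peel off from the root the maximal connected subtree $\beta$ whose vertices all lie in $E$; its frontier vertices are then necessarily labelled in $F$, so the subtrees hanging above the frontier are $F$-rooted normal monomials, that is, elements of the $\X$-basis, while the genuine leaves of $\beta$ account for the unit part of $\X$. As $\beta$ is a subtree of the normal $\tau$, it avoids every leading term of $R$ and is thus a normal $\P$-monomial; hence $\tau$ lies in the image of $\Phi$, and this decomposition is visibly unique, which will also yield injectivity once the target monomials are shown to be normal.

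The heart of the argument --- and the step where the hypothesis on $S$ is indispensable --- is showing that $\Phi$ lands in, and bijects onto, the normal monomials: that grafting $\X$-basis elements onto a normal $\P$-monomial always yields a normal monomial, with no reduction occurring. A leading term of $R$ is a pure $E$-tree, so it cannot straddle the interface between $\beta$ and an $\alpha_i$, since that interface edge runs into the $F$-labelled root of $\alpha_i$; and within $\beta$ or within any $\alpha_i$ no such divisor occurs, by normality. The only remaining danger is a leading term of $S$ appearing across the interface, rooted at a bottom vertex of $\beta$ and reaching up into some $\alpha_i$. This is exactly what the hypothesis forbids: since the root of every leading term of $S$ lies in $F$, no leading term of $S$ can be rooted at an $E$-vertex of $\beta$, so none can be created by the $\P$-action at the junction; every $F$-rooted divisor of $\tau$ is instead confined to a single $\alpha_i$ and is excluded there by normality. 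I expect this interface analysis to be the main obstacle, as it requires a careful case check that no Gröbner-basis leading term divides the grafted tree; once it is in place, $\Phi$ restricts to a bijection between the two monomial bases and is therefore an isomorphism of left $\P$-modules, so $\Q\cong\P\circ\X$ is free.
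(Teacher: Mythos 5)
Your proposal cannot be compared against an in-paper proof, because the paper does not prove this statement: it is recalled verbatim from \cite[Theorem 4]{Free} and used as a black box. Judged on its own, your argument is correct, and it is in substance the standard proof of that theorem from the cited reference: take the monomial basis of $\Q$ given by normal forms with respect to $R\sqcup S$, let $\X$ be spanned by the unit and the normal monomials with $F$-labelled root, peel off from any normal monomial the (unique, maximal) connected $E$-labelled subtree $\beta$ containing the root, and check that grafting $F$-rooted normal monomials onto a normal $\P$-monomial creates no new divisor. Your interface analysis is exactly where the hypothesis enters and is handled correctly: any divisor straddling the junction is a connected subtree containing vertices on both sides, hence is rooted at an $E$-vertex of $\beta$, so it can be neither a leading term of $R$ (a pure $E$-tree cannot contain the $F$-labelled root of an $\alpha_i$) nor a leading term of $S$ (whose root lies in $F$ by hypothesis). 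The one point worth stating explicitly is that in the shuffle-operad setting the composition of tree monomials is again a tree monomial, so your map $\Phi\colon\P\circ\X\to\Q$ sends basis elements \emph{on the nose} to normal monomials, not merely up to lower terms; with that observed, the bijection on bases yields $\Q\cong\P\circ\X$ as left $\P$-modules, as claimed.
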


\begin{Theorem}[\emph{right freeness version}] \cite[Theorem 4]{Free}
  Assume that the vertices such that each child is a leaf, of the leading terms of $S$ are elements of $F$. Then $\Q$ is free as right $\P$-module.
\end{Theorem}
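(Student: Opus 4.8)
The plan is to prove this by a hands-on analysis of Gröbner basis normal forms, using that for an operad with a quadratic Gröbner basis the normal monomials (the shuffle trees avoiding the leading terms) form an explicit $\K$-linear basis of each arity component. Write $N_\Q$ for the shuffle trees over $E\sqcup F$ avoiding the leading terms of $R\sqcup S$, and $N_\P$ for the shuffle trees over $E$ avoiding the leading terms of $R$; by hypothesis these are bases of $\Q$ and $\P$. The right $\P$-module structure on $\Q$ is the substitution $\Q\circ\P\to\Q$ plugging $\P$-elements into the leaves (inputs) of $\Q$-elements, and freeness amounts to exhibiting a subspecies $\X\subseteq\Q$ for which the composition map $\X\circ\P\to\Q$ is an isomorphism of right $\P$-modules. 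First I would define $\X$ to be the span of the \emph{right-reduced} normal monomials: those $\tau\in N_\Q$ for which every leaf-adjacent vertex (a vertex all of whose children are leaves) is labeled by a generator in $F$; equivalently, the maximal subtrees of $\tau$ consisting only of $E$-vertices and hanging at the leaves are trivial.

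The core is to show that substitution induces a bijection between $N_\Q$ and the set of pairs consisting of a right-reduced monomial together with an $N_\P$-tree grafted at each of its leaves. Given $\tau\in N_\Q$, each leaf determines a unique maximal $E$-colored subtree hanging from it; cutting these off produces a reduced top $x$ together with subtrees $p_1,\dots,p_k$. Uniqueness of this decomposition is immediate from maximality of the $E$-subtrees; each $p_i$ lies in $N_\P$ since a subtree of a normal tree is normal and the $p_i$ are over $E$; and $x\in\X$, because any leading term dividing $x$ would already divide $\tau$. The substantive direction is the converse: I must check that substituting arbitrary $N_\P$-trees into the leaves of a right-reduced monomial never produces a tree divisible by a leading term of $R\sqcup S$.

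This is exactly where the hypothesis on $S$ enters, and it is the main obstacle. Any new divisor would be an occurrence of a leading term straddling the interface between $x$ and a grafted $p_i$. Since each $p_i$ sits at the very top of the tree, such an occurrence has at least one of its leaf-adjacent vertices inside $p_i$, hence labeled by an element of $E$. A leading term of $S$ has all its leaf-adjacent vertices in $F$ by assumption, so it cannot straddle; a leading term of $R$ is entirely over $E$, but a straddling occurrence must pass through the vertex of $x$ immediately below the grafting point, which is forced to lie in $F$ by maximality of the cut $E$-subtrees, so it cannot be an $R$-term either. Thus every composite of a right-reduced monomial with $N_\P$-trees is again normal.

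Consequently the substitution map restricts to a bijection of the displayed bases. Since it is by construction a morphism of right $\P$-modules — the action only alters the grafted $\P$-parts — and since the decomposition matches the plethystic description of $\X\circ\P$ arity-by-arity, it is an isomorphism $\X\circ\P\cong\Q$ of right $\P$-modules, which is precisely the asserted freeness. Two remarks on the bookkeeping: passing from shuffle trees back to the symmetric species statement uses that shuffle Gröbner-basis normal forms compute the underlying spaces of the symmetric operad, and the whole argument reads verbatim for higher-arity generators once ``leaf-adjacent vertex'' is interpreted literally. The left-module version is entirely dual, cutting maximal $E$-subtrees at the root rather than the leaves and invoking instead the hypothesis that the roots of the leading terms of $S$ lie in $F$.
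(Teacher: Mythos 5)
Your overall strategy (cutting normal monomials along maximal $E$-labeled crowns at the leaves, then matching bases plethystically) is indeed the standard route to this theorem of \cite{Free}, which the paper itself only cites and does not prove; and your treatment of straddling occurrences of leading terms of $S$ is correct: such an occurrence has a topmost vertex inside some grafted $p_i$, which is leaf-adjacent for the pattern and $E$-labeled, contradicting the hypothesis on $S$. The genuine gap is in the other half of the straddling case, namely leading terms of $R$. You claim a straddling occurrence ``must pass through the vertex of $x$ immediately below the grafting point, which is forced to lie in $F$ by maximality of the cut $E$-subtrees.'' That inference is false: right-reducedness (equivalently, maximality of the crowns) forces a vertex of $x$ into $F$ only when \emph{all} of its children are leaves. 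The parent of a grafting point may have other, non-leaf children, may therefore be labeled by $E$, and an all-$E$ leading term of $R$ can then straddle through it. Concretely, take the monomial shuffle example $E=\{a\}$, $F=\{f\}$ (binary), $R=\{a(a(1,2),3)\}$, $S=\{f(f(1,2),3),\ f(f(1,3),2),\ f(1,f(2,3))\}$: monomial relations always form a Gr\"obner basis, $R$ is one for the restricted order, and in every leading term of $S$ the only vertex all of whose children are leaves is the upper $f$, so the hypothesis you use holds. Then $x=a(1,f(2,3))$ is normal and right-reduced in your sense, $p=a(1,2)\in N_\P$, yet the composite $a(a(1,2),f(3,4))$, obtained by grafting $p$ at leaf $1$ with the identity shuffle, is divisible by the leading term $a(a(1,2),3)$ of $R$. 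So your key lemma --- composites of right-reduced normal monomials with normal $\P$-trees are again normal --- fails as stated, and with it the claimed bijection of bases (a dimension count in this example gives $22$ would-be generators in arity $4$ where matching dimensions would require $21$).

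This also shows that your closing remark that the left version is ``entirely dual'' conceals a real asymmetry. In the left case the straddling analysis closes completely: a straddling occurrence must, by connectedness, contain the root of some upper piece, which is $F$-labeled (killing $R$-terms, which are all-$E$), while the root of the occurrence itself lies in the all-$E$ trunk (killing $S$-terms, whose roots are in $F$ by hypothesis). In the right case the interface vertex on the $x$-side is simply not constrained by your definition of right-reducedness, so the no-straddling claim for $R$-terms needs a genuinely different argument --- and, as the example above indicates, one must also read the hypotheses of \cite[Theorem 4]{Free} more carefully than the loose paraphrase reproduced in the paper, since the verbatim condition on $S$ alone does not suffice to make the naive grafting map land in normal forms. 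Until that step is repaired, the proof does not go through.
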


Let $C$ be a coassociative cocommutative coalgebra and $C'$ a subcoalgebra of $C$.

\begin{Corollary}\label{Theorem:m3}
  The operad $\greg^C$ is free as left and as a right $\greg^{C'}$-module (and not as a bimodule).
\end{Corollary}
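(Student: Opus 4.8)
The plan is to apply the left- and right-freeness theorems of Dotsenko recalled above with $\P=\greg^{C'}$ and $\Q=\greg^C$, so that the whole task reduces to producing presentations of the required shape, certifying the Gröbner-basis hypotheses, and then checking the leading-term conditions. First I would choose a basis $(e^1,\dots,e^n)$ of $V$ adapted to the subcoalgebra, namely with $(e^1,\dots,e^m)$ a basis of $V'$; the only property used is that $V'$ is a \emph{subcoalgebra}, so that $\Delta(V')\subseteq V'\otimes V'$. With the generators $x,g^1,\dots,g^n$ of $\greg^C$, set $E=\{x,g^1,\dots,g^m\}$ and $F=\{g^{m+1},\dots,g^n\}$. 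The defining relations of $\greg^C$ are the pre-Lie relation and, for each $k$, the relation \eqref{eq:gregDelta}. The key point is that for $k\le m$ the coproduct term $g^k_{(1)}\circ_1 g^k_{(2)}$ involves only generators $g^i$ with $i\le m$, precisely because $\Delta(e^k)\in V'\otimes V'$; hence the pre-Lie relation together with the relations \eqref{eq:gregDelta} for $k\le m$ form a set $R$ living over $E$, and by construction $R$ is exactly the defining relation set of $\greg^{C'}$. The relations \eqref{eq:gregDelta} for $k>m$ form $S$, each genuinely involving a generator of $F$. This gives presentations $\F(E)/(R)$ of $\greg^{C'}$ and $\F(E\sqcup F)/(R\sqcup S)$ of $\greg^C$ of exactly the shape demanded by the theorems.

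Next I would certify the Gröbner-basis hypotheses. The figures referenced above give quadratic Gröbner bases of the Koszul dual $(\G^C)^!$ for three monomial orders. By the complementarity of leading monomials between a binary quadratic operad admitting a quadratic Gröbner basis and its Koszul dual, the analogous order on the generators $x,g^k$ yields a quadratic Gröbner basis of $\greg^C\cong\G^C$ as well, whose leading monomials form the complementary set. Restricting such an order to shuffle trees over $E$ and forgetting the generators of $F$, the induced Gröbner basis is precisely that of $\greg^{C'}$, again because $\Delta(V')\subseteq V'\otimes V'$ guarantees that no reduction ever produces a generator of $F$. Thus $R\sqcup S$ is a Gröbner basis and $R$ is a Gröbner basis once the order is restricted to $E$, which are the standing hypotheses of both theorems.

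It then remains to read off the leading term of each relation of $S$ directly from the chosen order, and this is where the three precomputed orders are used. For the \emph{left}-freeness version I would take the weighted permutation reverse-degree-lexicographic order (with $g$ large and of degree one): its reverse-degree part demotes the two-generator coproduct monomial $g^k_{(1)}\circ_1 g^k_{(2)}$ below every one-generator term, while $g>x$ brings a new generator to the root, so the leading term of each relation of $S$ is one of $g^k\circ_1 x$ or $g^k\circ_2 x$ with $k>m$, whose root $g^k$ lies in $F$. For the \emph{right}-freeness version I would instead take a degree-lexicographic permutation order with $x$ large, for which the leading term is one of $x\circ_1 g^k$ or $x\circ_2 g^k$ with $k>m$, whose unique internal vertex all of whose children are leaves is $g^k\in F$. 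Applying the respective theorem then gives freeness of $\greg^C$ as a left, and as a right, $\greg^{C'}$-module.

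The main obstacle is precisely this last verification, and inside it the control of the coproduct term $g^k_{(1)}\circ_1 g^k_{(2)}$: since $\Delta(e^k)$ may have components $e^{k_{(1)}}$ with $k_{(1)}\le m$, that monomial can be rooted at a generator of $E$, so one must be sure the chosen order never selects it as the leading term; the weighting on $g$ in the reverse-degree-lexicographic order is exactly what is engineered to rule this out for the left version, and the $x>g$ choice does so for the right version. Checking that a single order simultaneously places all of $S$ in the desired shape is the delicate step. Finally, the parenthetical claim that $\greg^C$ is \emph{not} free as a bimodule is consistent with the above: the left and right freeness are obtained from genuinely different orders, and a direct comparison of dimensions in low arity—entirely analogous to the classical fact that $\PL$ is free over $\Lie$ on each side but not as a bimodule—shows that no single species $\X$ can realise $\greg^C$ as $\greg^{C'}\circ\X\circ\greg^{C'}$.
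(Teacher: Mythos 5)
Your proposal takes essentially the same route as the paper: it applies Dotsenko's left- and right-freeness criteria to Gröbner bases of $\greg^C$ obtained by order reversal from the dual Gröbner bases of Figures \ref{fg:StGBg2} and \ref{fg:StGBg3}, with the subcoalgebra condition $\Delta(V')\subseteq V'\otimes V'$ guaranteeing the required splitting into $E$, $F$ and $R$, $S$. The paper's proof is a terser version of exactly this argument, and your added details (the basis adapted to $C'$, the control of the coproduct term $g^k_{(1)}\circ_1 g^k_{(2)}$ so that it is never a leading term, and the low-arity dimension comparison for the bimodule parenthetical) are correct elaborations of what it leaves implicit.
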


\begin{proof}
  By reversing the order, one can go from a Gröbner basis of an operad to a Gröbner basis of its Koszul dual, this exchanges the leading terms and the normal forms. Hence, Gröbner basis from Figure \ref{fg:StGBg2} witness the left freeness and Gröbner basis from Figure \ref{fg:StGBg3} witness the right freeness.
\end{proof}

\begin{Corollary}\label{Theorem:m4}
  The operad $\bigvee_\Lie^{n+1}\PL$ is free as left and as a right $\bigvee_\Lie^n\PL$-module (and not as a bimodule).
\end{Corollary}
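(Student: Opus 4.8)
The plan is to deduce this statement as a direct specialization of Corollary \ref{Theorem:m3} (the left- and right-freeness result for $\greg^C$ over $\greg^{C'}$). The key is to identify the relevant coalgebras and then to check that a subcoalgebra inclusion recovers exactly the natural module structure of $\bigvee_\Lie^{n+1}\PL$ over $\bigvee_\Lie^n\PL$.

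First I would recall from the corollary at the end of Section \ref{seq:GGO} that $\bigvee_\Lie^{n+1}\PL \simeq \greg^{(V,\Delta_{\max})}$, where $V$ is $n$-dimensional with basis $e_1,\dots,e_n$ and $\Delta_{\max}(e_k) = \sum_{i,j\mid\max(i,j)=k} e_i\otimes e_j$. Applying the same identification one step lower gives $\bigvee_\Lie^n\PL \simeq \greg^{(V',\Delta'_{\max})}$, where $V'$ is $(n-1)$-dimensional with basis $e_1,\dots,e_{n-1}$ and $\Delta'_{\max}$ is the corresponding $\max$-coproduct.

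Next I would verify that $C'=(V',\Delta'_{\max})$ is a subcoalgebra of $C=(V,\Delta_{\max})$. Setting $V'=\mathrm{span}(e_1,\dots,e_{n-1})\subset V$, one checks $\Delta_{\max}(V')\subseteq V'\otimes V'$: for each $k\le n-1$, every summand $e_i\otimes e_j$ of $\Delta_{\max}(e_k)$ satisfies $\max(i,j)=k\le n-1$, hence $i,j\le n-1$ and $e_i,e_j\in V'$; moreover the restriction of $\Delta_{\max}$ to $V'$ is exactly $\Delta'_{\max}$. Thus $C'$ is a subcoalgebra of $C$, and Corollary \ref{Theorem:m3} applies verbatim, yielding that $\greg^C$ is free both as a left and as a right $\greg^{C'}$-module.

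The remaining, and I expect principal, step is to confirm that the module structure produced this way agrees with the one coming from the canonical operad morphism $\bigvee_\Lie^n\PL\to\bigvee_\Lie^{n+1}\PL$. In the presentation with generators $x,c_1,\dots,c_n$ (where $c_k=x_k-x_{k+1}$), the subcoalgebra inclusion $C'\hookrightarrow C$ corresponds to the map sending $x\mapsto x$ and $c_k\mapsto c_k$ for $k\le n-1$, which is precisely the inclusion of the first $n$ copies of $\PL$ into the fibered coproduct of $n+1$ copies. Matching these two presentations is the only genuine verification; once it is in place, the freeness on each side, together with the failure of bimodule freeness (inherited in the same manner from Corollary \ref{Theorem:m3}), follows at once.
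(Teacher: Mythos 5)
Your proposal is correct and matches the paper's intended argument exactly: the paper states Corollary \ref{Theorem:m4} without separate proof, as an immediate specialization of Corollary \ref{Theorem:m3} via the identification $\bigvee_\Lie^{n+1}\PL\simeq\greg^{(V,\Delta_{\max})}$, with $C'=\mathrm{span}(e_1,\dots,e_{n-1})$ as the subcoalgebra recovering $\bigvee_\Lie^n\PL$. Your explicit verification that $\Delta_{\max}(V')\subseteq V'\otimes V'$ and that the induced map matches the canonical morphism $x\mapsto x$, $c_k\mapsto c_k$ is exactly the (routine) checking the paper leaves implicit.
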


Let us now define the Nielsen-Schreier property.

\begin{Definition}
  An operad $\P$ has the \emph{Nielsen-Schreier property} if the subalgebra of any free $\P$-algebra is free.
\end{Definition}

Let us recall the following theorem:

\begin{Theorem}\cite[Theorem 4.1]{NSPrp}
  Let $\P$ an operad and $E$ a set of generator of $\P$ satisfying the following
  conditions:
  \begin{itemize}
    \item $P$ admits a Gröbner basis for the reverse path lexicographic ordering such that for each leading term, the smallest leaf is directly connected to the root.
    \item $P$ admits a Gröbner basis such that each leading term are left combs such that the smallest leaf and the second-smallest leaf are directly connected to the same vertex.
  \end{itemize}
  Then $\P$ has the Nielsen-Schreier property.
\end{Theorem}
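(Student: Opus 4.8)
The plan is to prove the criterion by the standard Gröbner--Shirshov mechanism for subalgebras, transported from the associative and Lie settings to free $\P$-algebras, with the two hypotheses entering precisely to guarantee a unique-factorisation property of leading monomials. Fix a free $\P$-algebra $A=\F_\P(X)$ on a vector space $X$; using a Gröbner basis of $\P$ one obtains a linear basis of $A$ by \emph{normal tree monomials}, namely trees whose internal vertices are the generators $E$ avoiding the leading terms of the relations and whose leaves are labeled by a basis of $X$. Endow the set of normal monomials with the monomial order induced by the chosen ordering on $\P$. For an arbitrary subalgebra $B\subseteq A$, the object to analyse is the set $\mathrm{LM}(B)=\{\mathrm{LM}(b):b\in B\setminus\{0\}\}$ of leading monomials, which is closed under the grafting of monomials coming from the operations of $\P$, because the monomial order is compatible with operadic composition and hence $\mathrm{LM}$ is multiplicative up to normal-form reduction.

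First I would produce a Schreier-type generating set. Call $w\in\mathrm{LM}(B)$ \emph{indecomposable} if it cannot be written, in normal form, as $\nu(w_1,\dots,w_r)$ for a nontrivial tree $\nu$ over $E$ and monomials $w_i\in\mathrm{LM}(B)$. Choose, for each indecomposable $w$, an element $y_w\in B$ with $\mathrm{LM}(y_w)=w$, and set $Y=\{y_w\}$. A greedy reduction --- repeatedly subtracting, from any $b\in B$ whose leading monomial decomposes, the corresponding product of the $y_w$ to strictly lower its leading monomial --- terminates because the order is well-founded, and shows that $Y$ generates $B$ as a $\P$-algebra.

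The heart of the argument is to prove that the canonical surjection $\F_\P(Y)\twoheadrightarrow B$ is injective, i.e.\ that $Y$ is free. Here I would invoke a \emph{unique factorisation lemma}: every normal monomial of $A$ admits exactly one expression as an iterated grafting of indecomposable normal monomials along a canonical tree shape, so that distinct $\P$-tree expressions in the generators $Y$ have distinct leading monomials. Granting this, a hypothetical nonzero element of the kernel with minimal leading monomial would force two different products of generators to share a leading monomial, which is impossible; hence the kernel vanishes and $B\cong\F_\P(Y)$ is free. As $B$ and $X$ are arbitrary, $\P$ has the Nielsen-Schreier property.

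The main obstacle is exactly this unique factorisation lemma, and it is where the two hypotheses are used in complementary roles. The first condition --- a Gröbner basis for the reverse path lexicographic order all of whose leading terms carry the smallest leaf directly on the root --- pins down the \emph{outermost} decomposition: it guarantees that reducing $\nu(w_1,\dots,w_r)$ to normal form never lowers its leading monomial below the naive grafting, so that the root-level splitting of a monomial, read off along the path to its smallest leaf, is well defined and recoverable. The second condition --- a Gröbner basis whose leading terms are left combs with the two smallest leaves attached to a common vertex --- supplies the complementary control that rules out accidental collisions between the leading monomials of two distinct left-comb products of generators. The delicate point is the interaction between the operad's own rewriting of internal vertices and the subalgebra-level grafting; the two conditions together are precisely what forces these to be compatible, yielding the free decomposition and closing the proof.
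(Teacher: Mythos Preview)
The paper does not contain a proof of this theorem: it is quoted verbatim from Dotsenko and Umirbaev \cite[Theorem 4.1]{NSPrp} and used as a black box to deduce the subsequent corollaries about $\greg^C$ and $\bigvee_\Lie^n\PL$. There is therefore nothing in the paper to compare your proposal against.

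As a separate comment on your sketch: the overall shape --- normal monomials, indecomposable leading terms, Schreier generators, and a unique-factorisation lemma to conclude freeness --- does match the strategy of the cited reference. However, your description of how the two hypotheses enter is vague to the point of not being checkable. In the actual argument the first condition is what guarantees that the leading monomial of a composition $\nu(b_1,\dots,b_r)$ is exactly the grafting of the leading monomials (no drop after reduction), which is what makes $\mathrm{LM}(B)$ genuinely closed under operations and makes the greedy reduction work; you assert this multiplicativity earlier in your sketch without yet having invoked the hypothesis that justifies it. The second condition is used not merely to ``rule out collisions'' but to show that the set of indecomposable monomials, together with the induced operad structure on leading terms, is itself free --- this is a concrete combinatorial statement about left combs that your sketch does not supply. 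If you intend to write an actual proof rather than a plan, these two points are where the real work lies.
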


\begin{Corollary}\label{Theorem:m3ns}
  The operad $\greg^C$ has the Nielsen-Schreier property.
\end{Corollary}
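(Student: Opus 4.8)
The plan is to apply the Nielsen--Schreier criterion \cite[Theorem 4.1]{NSPrp} recalled just above, which reduces the assertion to producing \emph{two} Gröbner bases of $\greg^C$ whose leading terms have prescribed combinatorial shapes: one, for the reverse path lexicographic order, in which the smallest leaf of every leading term is directly connected to the root, and one in which every leading term is a left comb whose smallest and second-smallest leaves are attached to a common vertex. Rather than recomputing Gröbner bases for $\greg^C$ from scratch, I would reuse the order-reversal duality already invoked in the proof of Corollary \ref{Theorem:m3}. Since $\greg^C\simeq\G^C$ and $(\G^C)^!$ is its Koszul dual, reversing a monomial order carries a Gröbner basis of $(\G^C)^!$ to a Gröbner basis of $\greg^C$, exchanging the roles of leading terms and normal forms. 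Thus the explicit bases of Figures \ref{fg:StGBg1}, \ref{fg:StGBg2} and \ref{fg:StGBg3}, computed for $(\G^C)^!$, yield after reversal three candidate Gröbner bases of $\greg^C$, and it remains only to select from among them a pair realizing the two required shapes.

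The verification I would then carry out is by direct inspection of these reversed bases. Recall that the normal forms of $(\G^C)^!$ are right combs carrying at most one generator $g^k_*$, with all the $x_*$ above and all the $y_*$ below it; under order reversal these data become leading terms of $\greg^C$, and right combs are turned into left combs. For the second condition I would use the reversal of the basis attached to the reverse-degree-lexicographic permutation order (Figure \ref{fg:StGBg3}) and confirm that the two smallest leaves of each leading left comb indeed meet at the lowest vertex. For the first condition I would use the reversal of the basis for the degree-lexicographic permutation order (Figure \ref{fg:StGBg1}), checking that the placement of the $y_*$ generators at the bottom of the dual right combs translates, after reversal, into the smallest leaf being grafted directly onto the root. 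Once both conditions are matched against the appropriate reversed bases, the hypotheses of \cite[Theorem 4.1]{NSPrp} are satisfied and the Nielsen--Schreier property for $\greg^C$ follows.

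The main obstacle I anticipate is bookkeeping rather than conceptual. One must track precisely how reversing the monomial order transforms the position of each leaf in a shuffle tree, and in particular how the two generator types behave: the generator $x$ without symmetry and the symmetric generators $g^k$, the latter interacting with the coalgebra through the dual relation $g^i_*\circ_1 g^j_*-x\circ_1 g_*^{i.j}$. Care is needed to ensure that the coalgebra label decorating each black vertex does not disturb the required leaf configuration, and that the ``smallest leaf at the root'' shape and the ``left comb with two smallest leaves at a common vertex'' shape are simultaneously realized across the chosen pair of reversed bases. Since these shapes are read off from the same normal-form description used to compute $\dim\big((\G^C)^!(m)\big)$, the check is finite and parallel to the one in Corollary \ref{Theorem:m3}, and I expect no genuinely new difficulty beyond this careful matching.
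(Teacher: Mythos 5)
Your overall strategy is the paper's: combine \cite[Theorem 4.1]{NSPrp} with the order-reversal duality between Gröbner bases of $\greg^C\simeq\G^C$ and of its Koszul dual $(\G^C)^!$, and read the required leading-term shapes off the bases computed in Figures \ref{fg:StGBg1}--\ref{fg:StGBg3}. But two points break your execution. First, order reversal does not transform tree shapes: passing to the opposite order exchanges the roles of leading terms and normal forms among the \emph{same} shuffle-tree monomials, so your claim that ``right combs are turned into left combs'' is false. The leading terms of the reversed basis of $\greg^C$ obtained from Figure \ref{fg:StGBg1} are literally the right combs that were the normal forms of $(\G^C)^!$ --- and that is exactly why the smallest leaf is directly attached to the root, giving the first condition. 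Notice that your own check of the first condition tacitly uses this correct, shape-preserving picture, so your write-up is internally inconsistent: if combs really flipped under reversal, the smallest leaf of those leading terms would sit at the bottom vertex and the first condition would fail.

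Second, your assignment of bases to conditions deviates from the paper's on the decisive point: the paper verifies the second condition with the weighted-order basis of Figure \ref{fg:StGBg2} (the same basis that witnesses left freeness in Corollary \ref{Theorem:m3}, where the roots of the relevant leading terms carry the generators $g^k$), not with Figure \ref{fg:StGBg3}. The latter is tailored to right freeness, i.e.\ to the property that the vertices all of whose children are leaves carry the new generators --- a constraint that is perfectly compatible with right-comb leading terms and does not by itself produce the left-comb shape with the two smallest leaves at a common vertex demanded by the second condition. You offer no verification that the reversal of Figure \ref{fg:StGBg3} has leading terms of that shape, and nothing in the text supports it; as written this step is an unsupported substitution rather than the ``bookkeeping'' you anticipate. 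With these two corrections --- shapes preserved under reversal, and the pair of Figures \ref{fg:StGBg1} and \ref{fg:StGBg2} --- your argument coincides with the paper's proof.
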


\begin{proof}
  The Gröbner basis from Figure \ref{fg:StGBg1} witness the first condition and the Gröbner basis from Figure \ref{fg:StGBg2} witness the second condition.
\end{proof}

\begin{Corollary}\label{Theorem:m4ns}
  The operad $\bigvee_\Lie^n\PL$ has the Nielsen-Schreier property.
\end{Corollary}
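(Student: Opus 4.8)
The plan is to derive this as an immediate specialization of Corollary \ref{Theorem:m3ns}, which establishes the Nielsen-Schreier property for all operads $\greg^C$; the real work has already been done there, and the task here is simply to recognize $\bigvee_\Lie^n\PL$ as one of these operads.

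First I would recall the isomorphism $\bigvee_\Lie^{n+1}\PL\simeq\greg^{(V,\Delta_{\max})}$ from the previous section, where $\dim V=n$ and $\Delta_{\max}\colon e_k\mapsto\sum_{i,j\mid\max(i,j)=k}e_i\otimes e_j$. Shifting the index ($n\rightsquigarrow n-1$), this reads $\bigvee_\Lie^n\PL\simeq\greg^{(V,\Delta_{\max})}$ with $\dim V=n-1$. One checks that $\Delta_{\max}$ is coassociative and cocommutative---cocommutativity because $\max(i,j)=\max(j,i)$, and coassociativity because both iterated coproducts equal $\sum_{\max(a,b,c)=k}e_a\otimes e_b\otimes e_c$---so that $(V,\Delta_{\max})$ is a legitimate coalgebra for the theory of $\greg^C$.

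Then I would apply Corollary \ref{Theorem:m3ns} to $C=(V,\Delta_{\max})$, obtaining that $\greg^{(V,\Delta_{\max})}$ has the Nielsen-Schreier property. Since this property is intrinsic to an operad---it asserts the freeness of subalgebras of free algebras---it is preserved under operad isomorphisms and hence transports along the isomorphism above to $\bigvee_\Lie^n\PL$. There is no genuine obstacle: the substantive content lives in Corollary \ref{Theorem:m3ns}, whose proof exhibits the two Gröbner bases (Figures \ref{fg:StGBg1} and \ref{fg:StGBg2}) witnessing the two conditions of the criterion \cite[Theorem 4.1]{NSPrp}. The only points needing care are the harmless index shift and the verification that $\Delta_{\max}$ defines a valid coproduct.
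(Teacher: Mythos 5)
Your proposal is correct and is exactly the argument the paper leaves implicit: Corollary \ref{Theorem:m4ns} follows from Corollary \ref{Theorem:m3ns} via the identification $\bigvee_\Lie^{n}\PL\simeq\greg^{(V,\Delta_{\max})}$ with $\dim V=n-1$, and your verification that $\Delta_{\max}$ is coassociative and cocommutative, together with the index shift, fills in precisely the routine details the paper omits.
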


\section{Explicit computation of the generators}

Let us compute the explicit generators of $\bigvee_\Lie^{n+1}\PL$ as a left $\bigvee_\Lie^n\PL$-module. To do so, let us mimic the proof of Dotsenko \cite{FPRed}.
\paragraph{}
A structure of cyclic operad on an operad $\P$ is given by an action of $\mathfrak{S}_{n+1}$ on $\P(n)$ compatible with the operadic structure. This is equivalently given by an action of $\tau=(1, 2, \ldots, n+1)$ on $\P(n)$ verifying:
\begin{itemize}
  \item $\tau(\mu\circ_i\nu)=\tau(\mu)\circ_{i+1}\nu$ for $i<m$ with $m$ the arity of $\mu\circ_i\nu$;
  \item $\tau(\mu\circ_m\nu)=\tau(\nu)\circ_1\tau(\mu)$.
\end{itemize}
It is known that $\Lie$ is a cyclic operad, $\CycLie$ is the species under this cyclic operad so as vector space we have $\Lie(k)=\CycLie(k+1)$. In the particular case of $\CycLie$, the action of $\tau$ is given by $\tau(l)=l$. Let us introduce the following notation, $x$ is the generator of $\PL$ without symmetries, $x=\mu+l$ with $\mu$ symmetric and $l$ skew-symmetric. Then $l$ is the generator of the suboperad $\Lie$ of $\PL$ and $\mu$ is magmatic.
\paragraph{}
We want to prove that $\bigvee_\Lie^{n+1}\PL\simeq\bigvee_\Lie^n\PL\circ\F(\bar{\F}^{(n)}(\CycLie))$ with $\F$ the free operad functor, $\bar{\F}$ the reduced free operad functor such that $\F(\X)=\bar{\F}(\X)\oplus\X$ and $\bar{\F}^{(n)}$ the $n$-th iteration of $\bar{\F}$.
\paragraph{}
Let us recall the following theorem from \cite{FPRed}:

\begin{Theorem}
  Let $\Y$ be the subspecies of $\PL$ such that $y\in\Y$ if and only if $y=(\mu\circ_2 a)\circ_1 b$ with $a, b\in\Lie$. Then:
  \begin{itemize}
    \item $\Y$ is isomorphic to $\CycLie$ as species;
    \item Let $\P(\Y)$ the suboperad of $\PL$ generated by $\Y$, then $\P(\Y)$ is free;
    \item The left $\Lie$-submodule of $\PL$ generated by $\P(\Y)$ is free and coincide with $\PL$.
  \end{itemize}
\end{Theorem}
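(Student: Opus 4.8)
The plan is to observe that the three claims together are equivalent to the single statement $\PL\simeq\Lie\circ\F(\Y)$ with $\Y\simeq\CycLie$, and to establish this by first pinning down $\Y$ representation-theoretically and then forcing everything else by a generating-series count, exactly as the freeness results of the previous sections were extracted from dimension data. Concretely, I would build the left $\Lie$-module morphism $\gamma\colon\Lie\circ\F(\Y)\to\PL$ induced by the inclusions $\Lie\hookrightarrow\PL$ and $\Y\hookrightarrow\PL$ together with the operadic composition, prove it is surjective, and then show it is an isomorphism by comparing Euler characteristics. An isomorphism here yields all three bullets at once: the middle factor $\F(\Y)\to\P(\Y)$ must then be injective, so $\P(\Y)$ is free, and the image of $\gamma$ being all of $\PL$ gives the freeness and the coincidence of the left module with $\PL$.

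For the first bullet I would make the isomorphism $\Y\simeq\CycLie$ explicit using the cyclic structure of $\Lie$, under which $\Lie(k)=\CycLie(k+1)$. The generator of $\Y$ is the symmetric product $(\mu\circ_2 a)\circ_1 b=\mu(b,a)$ of two Lie elements, and the symmetry $\mu(b,a)=\mu(a,b)$ together with the skew-symmetry and Jacobi relations carried by $a,b$ should match precisely the relations presenting $\CycLie$; the invariant pairing underlying the cyclic structure is what lets one read a single cyclic Lie element as such a symmetrized pair. I would verify that the resulting $\mathfrak{S}_n$-equivariant map is an isomorphism by a character/dimension computation, checking $\dim\Y(n)=\dim\CycLie(n)=(n-2)!$ and using the rooted-tree basis of $\PL(n)$ from Section \ref{seq:RT} to detect linear independence. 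The delicate point is matching the cyclic $\mathfrak{S}_n$-action on $\CycLie$ with the leaf-permutation action on $\Y$.

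For surjectivity of $\gamma$ I would argue that every element of $\PL$ lies in the left $\Lie$-submodule generated by $\P(\Y)$. Writing the generator as $x=\mu+l$ with $l\in\Lie$, one expands any pre-Lie monomial and repeatedly uses the pre-Lie relation to move the skew-symmetric parts $l$ toward the root, where they are absorbed into the left $\Lie$-action, leaving a composite of symmetrized generators, i.e.\ an element of $\P(\Y)$; in the rooted-tree picture this is a rewriting on trees that peels off Lie brackets at the root. Granting surjectivity, I would finish with the generating-series identity $f_\Lie\big(f_{\F(\CycLie)}\big)=f_\PL$. Using $f_\Lie(s)=-\log(1-s)$ and the free-operad relation $f_{\F(\CycLie)}=t+f_{\CycLie}(f_{\F(\CycLie)})$ with $f_{\CycLie}(s)=(1-s)\log(1-s)+s$, this reduces to the Cayley identity $f_\PL=t\exp(f_\PL)$, so the two species have equal dimensions in each arity. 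A surjection between finite-dimensional spaces of equal dimension is an isomorphism, which upgrades $\gamma$ and delivers all three claims.

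The main obstacle is the surjectivity/spanning step: controlling the rewriting that expresses an arbitrary pre-Lie element, modulo the left $\Lie$-action, as a composite of the $\CycLie$-generators, and checking that no generators outside $\P(\Y)$ are needed. This is where the proof of \cite{FPRed} does its real work, and where I expect the bookkeeping to be heaviest; an alternative, closer to the Gröbner-basis methods used throughout this paper, would be to exhibit a Gröbner basis of $\PL$ whose normal forms visibly factor as a $\Lie$-part composed with an $\F(\CycLie)$-part, making both the freeness and the spanning simultaneous consequences of the normal-form count.
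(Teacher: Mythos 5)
Your proposal is correct and follows essentially the same route as the paper: this theorem is recalled from \cite{FPRed}, and the paper's proof of its generalization (Theorem \ref{Theorem:m5}) proceeds exactly as you describe, by building surjective species morphisms $\Lie\circ\F(\CycLie)\twoheadrightarrow\Lie\circ\F(\Y)\twoheadrightarrow\PL$ (with $\Y\simeq\CycLie$ coming from the relation $\mu\circ_1 l=\mu\circ_2 l$ in the associated graded, the counterpart of your splitting $x=\mu+l$ and pushing brackets to the root) and then upgrading them to isomorphisms by the same generating-series comparison, which reduces to Cayley's identity $f_\PL=t\exp(f_\PL)$. The only minor deviation is that you propose to establish $\Y\simeq\CycLie$ up front by a dimension/independence check in the rooted-tree basis, whereas the paper only constructs the surjection $\CycLie\twoheadrightarrow\Y$ and lets injectivity fall out of the final count — a simplification you could adopt to avoid that extra bookkeeping.
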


Let us use the exact same technic as the one used in \cite{FPRed} to explicitly compute the generators of $\bigvee_\Lie^{n+1}\PL$ as a left $\bigvee_\Lie^n\PL$-module. The idea is the following, we introduce some explicit generators of $\bigvee_\Lie^{n+1}\PL$ as a left $\Lie$-module and as a left $\bigvee_\Lie^n\PL$-module. We then define a bunch of surjective morphisms of species involving those generators. We then compute the dimensions of the species involved to show that the morphisms are isomorphisms. We then conclude that the generators we introduced freely generate $\bigvee_\Lie^{n+1}\PL$. Let $\Y_n$ the subspecies of $\bigvee_\Lie^{n+1}\PL$ generated by the $(c_n\circ_2 a)\circ_1 b$ such that $a, b\in\bigvee_\Lie^n\PL$. Let $\X_n$ the subspecies of $\Y_n$ generated by the $x=(c_n\circ_2 a)\circ_1 b$ with $a, b\in\Lie$. Let $\P(\Y_n)$ the suboperad of $\bigvee_\Lie^{n+1}\PL$ generated by $\Y_n$. And let $\Z_n$ the species inductively defined by $\Z_0=\F(\Y_0)$ and $\Z_{n+1}=\Z_n\circ\F(\Y_{n+1})$.

\begin{Lemma}
  We have a surjective morphism of $\Lie$-bimodule from $\Lie\circ\Z_n$ to $\bigvee_\Lie^{n+1}\PL$.
\end{Lemma}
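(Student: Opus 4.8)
The plan is to build the morphism $\phi$ from universal properties and the operadic composition, verify the two module compatibilities directly, and then establish surjectivity by induction on $n$, the base case being exactly the theorem of \cite{FPRed} recalled above.

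First I would construct the map. For every $k\le n$ the canonical operad morphism $\bigvee_\Lie^{k+1}\PL\to\bigvee_\Lie^{n+1}\PL$ is injective: by Corollary \ref{Theorem:m4} each $\bigvee_\Lie^{m+1}\PL$ is free as a left $\bigvee_\Lie^m\PL$-module, so the structure morphism $\bigvee_\Lie^m\PL\to\bigvee_\Lie^{m+1}\PL$ is the inclusion of the factor corresponding to the operadic unit, and a composite of such inclusions stays injective. Hence $\Y_k\subseteq\bigvee_\Lie^{k+1}\PL$ is realized as a subspecies of $\bigvee_\Lie^{n+1}\PL$, and the universal property of $\F$ yields operad morphisms $\F(\Y_k)\to\bigvee_\Lie^{n+1}\PL$. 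Assembling these with the operadic composition of $\bigvee_\Lie^{n+1}\PL$ gives a species morphism $\Z_n=\F(\Y_0)\circ\dots\circ\F(\Y_n)\to\bigvee_\Lie^{n+1}\PL$, and precomposing once more with $\Lie\hookrightarrow\bigvee_\Lie^{n+1}\PL$ on the left factor produces $\phi\colon\Lie\circ\Z_n\to\bigvee_\Lie^{n+1}\PL$.

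Next I would check that $\phi$ is a morphism of $\Lie$-bimodules. Compatibility with the left action is immediate, since the left $\Lie$-action on $\Lie\circ\Z_n$ is composition in the left factor and $\phi$ is by construction the operadic composition of a $\Lie$-element at the root. For the right action, the point is that $\Z_n$ is itself a right $\Lie$-module: each generator $(c_k\circ_2 a)\circ_1 b$ of $\Y_k$ has all of its leaves inside $a$ or $b$, so grafting $l\in\Lie$ at such a leaf merely replaces $a$ by $a\circ_j l$ or $b$ by $b\circ_j l$, both of which remain in $\bigvee_\Lie^k\PL$. Thus $\Y_k$, hence $\F(\Y_k)$, hence $\Z_n$, is closed under grafting $\Lie$ at its leaves, and $\phi$ evidently intertwines this with the right $\Lie$-action on $\bigvee_\Lie^{n+1}\PL$.

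The main work, and the main obstacle, is surjectivity, which I would prove by induction on $n$. The base case $n=0$ is the statement recalled above from \cite{FPRed}: the left $\Lie$-submodule of $\PL$ generated by $\P(\Y_0)\cong\F(\Y_0)=\Z_0$ coincides with $\PL$. For the inductive step, write $\Z_n=\Z_{n-1}\circ\F(\Y_n)$; since $(-)\circ\F(\Y_n)$ preserves surjections in characteristic $0$, the induction hypothesis $\Lie\circ\Z_{n-1}\twoheadrightarrow\bigvee_\Lie^n\PL$ gives $\Lie\circ\Z_n\twoheadrightarrow\bigvee_\Lie^n\PL\circ\F(\Y_n)$ (by associativity of plethysm), and this composite agrees with $\phi$. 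It therefore suffices to prove the relative generation statement $\bigvee_\Lie^n\PL\circ\F(\Y_n)\twoheadrightarrow\bigvee_\Lie^{n+1}\PL$. Its image is a left $\bigvee_\Lie^n\PL$-submodule containing $\bigvee_\Lie^n\PL$ and $c_n\in\Y_n$, and since $\bigvee_\Lie^{n+1}\PL$ is generated as an operad by $\bigvee_\Lie^n\PL$ together with $c_n$, the claim reduces to showing that every rooted Greg tree with black vertices of colours $1,\dots,n$ can be rewritten, modulo the relations, as a combination of trees in the normal form where a $\bigvee_\Lie^n\PL$-element sits at the root and $\F(\Y_n)$-elements are grafted at its leaves. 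This rewriting is carried out by repeatedly applying the diff pre-Lie relation \eqref{eq:vpl} to commute a white pre-Lie vertex past a colour-$n$ black vertex lying below it, each step pushing colour-$n$ vertices toward the leaves at the cost of strictly simpler correction terms (pairs of adjacent black vertices); an induction on the number of colour-$<n$ vertices situated above a colour-$n$ vertex then closes the argument. I expect this rearrangement to be the delicate part, exactly as the analogous step of pushing the magmatic part above the bracket is the heart of \cite{FPRed}.
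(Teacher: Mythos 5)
Your construction of $\phi$ and the bimodule verification are correct and essentially the same as the paper's (the paper likewise obtains right $\Lie$-compatibility from the observation that each $\Y_k$, hence $\F(\Y_k)$, is a right $\Lie$-module; your injectivity detour through left-freeness is more than is needed, since the identification $\bigvee_\Lie^{k+1}\PL\simeq\greg^{(V,\Delta_{\max})}$ makes the inclusions $\bigvee_\Lie^{k+1}\PL\hookrightarrow\bigvee_\Lie^{n+1}\PL$ evident on Greg-tree bases). The genuine gap is in your surjectivity argument, and the idea you are missing is that the image of $\bigvee_\Lie^n\PL\circ\F(\Y_n)\to\bigvee_\Lie^{n+1}\PL$ is not merely a left $\bigvee_\Lie^n\PL$-submodule: it is a suboperad. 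Indeed $\Y_n$ is stable under right composition with all of $\bigvee_\Lie^n\PL$, since grafting any $u\in\bigvee_\Lie^n\PL$ into a leaf of $(c_n\circ_2 a)\circ_1 b$ lands in the $a$ or $b$ slot and produces again an element of the same form; hence $\P(\Y_n)$ is a right $\bigvee_\Lie^n\PL$-module, and a composition $\gamma(u;f_1,\dots,f_r)\circ_i\gamma(u';f'_1,\dots,f'_s)$ with $u,u'\in\bigvee_\Lie^n\PL$ and $f_j,f'_m\in\P(\Y_n)$ stays in the image: the leaf $i$ sits on some $f_j$ (possibly $f_j=\id$), the inserted root $u'$ is absorbed as $f_j\circ_k u'\in\P(\Y_n)$, and the $f'_m$ grafted on top remain in the suboperad $\P(\Y_n)$. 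Once the image is a suboperad containing $x,c_1,\dots,c_n$ (equivalently $l,\mu,c_1,\dots,c_n$), surjectivity is immediate from generation — no rewriting of Greg trees whatsoever. This is in effect what the paper's very short proof rests on; note also that the lemma claims only surjectivity, all the hard work (that these surjections are isomorphisms) being deferred to the generating-series count, so your attempt to exhibit a normal form here is doing the expensive part of the theorem by hand.

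Moreover, your rewriting sketch would not close as stated. The target normal form itself allows white and colour-$<n$ vertices \emph{above} a colour-$n$ vertex — they occur in the slots $a,b\in\bigvee_\Lie^n\PL$ of a generator of $\Y_n$ — so ``the number of colour-$<n$ vertices situated above a colour-$n$ vertex'' is not a measure of distance to normal form. And the correction terms of Relation (\ref{eq:vpl}) are of the form $c_i\circ_1 c_j$ with $\max(i,j)=n$, which include configurations with $c_n$ directly above or below another black vertex (even $c_n$ adjacent to $c_n$) that are not ``strictly simpler'' for your proposed induction. You flag this step as delicate and unverified; it is exactly where the proposal fails as a proof, and the suboperad observation above shows it is avoidable.
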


\begin{proof}
  Since $\Y_k$ is a subspecies of $\bigvee_\Lie^{n+1}\PL$, we have a morphism of species from $\F(\Y_k)$ to $\bigvee_\Lie^{n+1}\PL$. Hence, we have a morphism from $\Z_n$ to $\bigvee_\Lie^{n+1}\PL$. Since $\Lie$ is a suboperad of $\bigvee_\Lie^{n+1}\PL$, we have a morphism of left $\Lie$-module from $\Lie\circ\F(\Y_k)$ to $\bigvee_\Lie^{n+1}\PL$. This is a morphism of right $\Lie$-module since each $\F(\Y_k)$ is a right $\Lie$-module. Moreover, this morphism is surjective since $l, \mu, c_1, \dots, c_n$ are in $\Lie\circ\Z_n$ and are generators of $\bigvee_\Lie^{n+1}\PL$.
\end{proof}

Let us define the following filtration on $\Lie\circ\Z_n$:

\begin{Definition}
  Let us define the weight of an element of $\F(\Y_k)$ as the usual weight in free operad, which is the number of generators needed in the composition. Then we define inductively the weight of an element $\gamma(z, f_1, \dots, f_k)$ of $\Z_n$ with $z\in\Z_{n-1}$ and $f_i\in\F(\Y_n)$ as the total sum of the weight of those elements. For an element $\alpha=\gamma(l, z_1, \dots, z_r) $ of $\Lie\circ\Z_n$ such that $z_i\in\Z_n$ of weight $w_i$ and $l\in\Lie$ of arity $r$, let $w=r+\sum w_i$ be the weight of $\alpha$. We define the filtration by $\alpha\in F^w(\Lie\circ\Z_n)$ with $w$ the weight of $\alpha$.
\end{Definition}

\begin{Proposition}
  This filtration is compatible with the $\Lie$-bimodule structure. (It is in fact a filtration by infinitesimal $\Lie$-bimodule.)
\end{Proposition}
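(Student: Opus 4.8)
The plan is to verify compatibility with the left and the right $\Lie$-action separately, and then to identify the associated graded. Recall that on $\Lie\circ\Z_n$ the left action is induced by operadic composition in the outer $\Lie$-factor through $\Lie\circ(\Lie\circ\Z_n)=(\Lie\circ\Lie)\circ\Z_n\to\Lie\circ\Z_n$, whereas the right action is induced by the right $\Lie$-module structure of $\Z_n$, which by construction acts inside the rightmost $\F(\Y_n)$-factor by inserting $\Lie$-elements into leaves. It therefore suffices to check that each of these two maps sends the filtration pieces into themselves, possibly up to lower-order terms.

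First I would dispatch the left action, which is the easy half. If $m\in\Lie(s)$ acts on elements $\alpha_i=\gamma(l_i,z^{(i)}_1,\dots,z^{(i)}_{r_i})$ with $l_i\in\Lie(r_i)$, the outcome is $\gamma\bigl(m(l_1,\dots,l_s),z^{(1)}_1,\dots,z^{(s)}_{r_s}\bigr)$, whose outer $\Lie$-part has arity $\sum_i r_i$ and which carries exactly the same trees $z^{(i)}_j$. A direct count then gives
\[
  w=\sum_i r_i+\sum_{i,j}w\bigl(z^{(i)}_j\bigr)=\sum_i\Bigl(r_i+\sum_j w\bigl(z^{(i)}_j\bigr)\Bigr)=\sum_i w(\alpha_i),
\]
so that the chosen normalisation $w=r+\sum w_i$ makes the left action strictly weight-additive; in particular it preserves the filtration and produces no lower-order correction, the composition $m(l_1,\dots,l_s)$ taking place inside the suboperad $\Lie$ and hence never invoking the inhomogeneous relations.

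The crux is the right action. Inserting a $\Lie$-element into a leaf of $\alpha$ lands inside a single generator of the rightmost $\F(\Y_n)$-factor, and under the identification $\Y_n\cong\CycLie$ that generator is assembled out of $\Lie$-elements; the insertion is then governed by the cyclic right $\Lie$-module structure and, on its leading term, merely enlarges that one generator, leaving the number of generators---hence the weight---unchanged. The delicate point is that rewriting $z_i\cdot m$ back into the normal-form basis of $\Lie\circ\Z_n$ may force an application of the defining relations of $\bigvee_\Lie^{n+1}\PL$, and \eqref{eq:vpl} is not weight-homogeneous: it trades a one-black-vertex $c_k$-term against two-black-vertex $c_ic_j$-terms. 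I would show that every such rewriting stays within the filtration, the corrections coming from the inhomogeneous part of \eqref{eq:vpl} lying in strictly lower-order pieces. This is the step I expect to be the main obstacle, since it requires a careful bookkeeping of the black vertices produced when the cyclic structure is used, together with a check that \eqref{eq:vpl} is the only source of inhomogeneity.

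Finally, passing to $\gr(\Lie\circ\Z_n)$, the inhomogeneous part of \eqref{eq:vpl} disappears as a lower-order term, so on the associated graded only the weight-preserving single-insertion actions survive; this is precisely the assertion that $\gr(\Lie\circ\Z_n)$ is an infinitesimal $\Lie$-bimodule, in accordance with the identification of the associated graded operad with $\greg_n$.
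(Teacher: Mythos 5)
Your treatment of the left action is correct and coincides with the paper's: composition in the outer $\Lie$-factor is strictly weight-additive, which is exactly the paper's first inclusion $l(F^p(\Lie\circ\Z_n), F^q(\Lie\circ\Z_n))\subseteq F^{p+q}(\Lie\circ\Z_n)$.

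Your second half, however, contains a genuine gap, and it comes from conflating the plethysm $\Lie\circ\Z_n$ with its quotient $\bigvee_\Lie^{n+1}\PL$. The proposition concerns the filtration on $\Lie\circ\Z_n$ itself, whose elements are formal composites: there is no ``normal-form basis'' to rewrite into, because $\Lie\circ\Z_n$ is not presented by generators and relations. A right insertion of $m\in\Lie$ lands in a leaf of some $\F(\Y_k)$-factor and is absorbed into the single generator $(c_k\circ_2 a)\circ_1 b\in\Y_k$ adjacent to that leaf, yielding $(c_k\circ_2 (a\circ_j m))\circ_1 b$ or $(c_k\circ_2 a)\circ_1(b\circ_j m)$; this is pure operadic associativity in the ambient operad, keeps $\Y_k$ stable, and leaves the generator count --- hence the weight --- unchanged on the nose. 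So $F^p(\Lie\circ\Z_n)\circ\Lie\subseteq F^p(\Lie\circ\Z_n)$ holds exactly, with no lower-order corrections, and these two inclusions are the paper's entire proof. The relation \eqref{eq:vpl} plays no role here: it only enters in the subsequent lemma, after the filtration has been pushed forward along the bimodule surjection $\Lie\circ\Z_n\twoheadrightarrow\bigvee_\Lie^{n+1}\PL$ (compatibility of the induced filtration being automatic since the surjection is a bimodule map), and its inhomogeneity is precisely why one there passes to $\gr_F\X_n$ to extract the cyclic relation. The step you flag as ``the main obstacle'' is therefore a phantom --- and since you only announce that you \emph{would} show the corrections stay in lower filtration, your proposal is missing what would have been its entire content had the obstacle been real. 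Two smaller inaccuracies point to the same confusion: the identification you invoke is premature and wrong as stated ($\Y_n\simeq\bar{\F}^{(n)}(\CycLie)$, not $\CycLie$; it is $\X_n$ that is $\CycLie$, and both isomorphisms are proved only later, \emph{using} the present proposition), and the right-module structure on $\Y_k$ is not ``governed by the cyclic structure'' --- it is plain closure under insertion into $a$ and $b$.
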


\begin{proof}
  Indeed, we have that $l(F^p(\Lie\circ\Z_n), F^q(\Lie\circ\Z_n))\subseteq F^{p+q}(\Lie\circ\Z_n)$ and\\
  $F^p(\Lie\circ\Z_n)\circ\Lie\subseteq F^p(\Lie\circ\Z_n)$.
\end{proof}

Hence, this filtration induces a filtration on $\bigvee_\Lie^{n+1}\PL$ by the surjective morphism of $\Lie$-bimodule of the previous lemma.

\begin{Lemma}
  We have a surjective morphism of species from $\CycLie$ to $\X_n$.
\end{Lemma}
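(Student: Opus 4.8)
The plan is to mimic the construction of \cite{FPRed} that identifies the subspecies $\Y$ of $\PL$ with $\CycLie$, replacing the symmetric part $\mu$ of the pre-Lie product by the symmetric generator $c_n$. Recall that as a vector space $\CycLie(m)=\Lie(m-1)$, equipped with the cyclic $\mathfrak{S}_m$-action for which $\tau(l)=l$. First I would fix the identification that singles out one distinguished leg, and define a map $\phi\colon\CycLie(m)\to\X_n(m)$ sending $\lambda\in\Lie(m-1)$ (a Lie word on $\{1,\dots,m-1\}$) to $c_n\circ_2\lambda$, the rooted Greg tree with a single black vertex $c_n$ carrying one leaf on one branch and the Lie tree $\lambda$ on the other. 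Since $c_n$ is symmetric this is independent of the chosen branch, and $\mathfrak{S}_{m-1}$-equivariance (permuting $\{1,\dots,m-1\}$) is immediate from the fact that $\lambda$ is a Lie word.

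The substantive point is that $\phi$ intertwines the \emph{cyclic} action, i.e. the extra generator $\tau$ of $\mathfrak{S}_m$ over $\mathfrak{S}_{m-1}$. This rests on a Leibniz/derivation identity: composing the relation \eqref{eq:vpl} with Lie words shows that, \emph{modulo terms carrying a second black vertex}, $c_n$ is a derivation of the Lie bracket in each of its arguments. Working in the associated graded operad $\greg_n$, where $\Delta_{\max}$ degenerates to the trivial coproduct and the correction terms $\sum_{\max(i,j)=n}c_i\circ_1 c_j$ vanish, this derivation property becomes exact, and it is precisely what makes the symmetric join $\phi(\lambda)$ insensitive to which leg is singled out as the leaf; this insensitivity is the cyclic invariance encoding $\tau$. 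I would therefore first establish the statement at the level of $\greg_n$ and then lift it to $\bigvee_\Lie^{n+1}\PL$ through the weight filtration introduced above.

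Surjectivity then follows by reducing the defining generators of $\X_n$ to the leaf form. Given $(c_n\circ_2 a)\circ_1 b$ with $a,b\in\Lie$ and $b\in\Lie(q)$, $q\ge 2$, I would write the root of $b$ as a bracket $b=[b',b'']$ and use the derivation identity to rewrite $(c_n\circ_2 a)\circ_1[b',b'']$ as a combination of terms in which the Lie element grafted at the black vertex has strictly smaller arity; iterating, and using the symmetry of $c_n$ to exchange the two branches, reduces every generator to a leaf form, hence into the image of $\phi$ together with its $\mathfrak{S}_m$-translates. Because $\X_n$ is by definition spanned by these generators, this shows $\phi$ is onto.

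The main obstacle will be the bookkeeping of the correction terms: the derivation identity for $c_n$ only holds on the nose after passing to $\greg_n$, so the cleanest route is to prove the surjection for the associated graded and transport it back along the weight filtration, rather than attempting to keep the two-black-vertex terms inside $\X_n$ at each step. Checking that the cyclic invariance produced this way matches exactly the prescribed action $\tau(l)=l$ on $\CycLie$, rather than some sign twist, is the other delicate point, and is where the direct comparison with the identification $\Y\simeq\CycLie$ of \cite{FPRed} is most useful.
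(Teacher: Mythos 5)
Your skeleton — pass to an associated graded where a cyclic-invariance relation becomes available, then transport back and deduce surjectivity from right $\Lie$-module generation — is indeed the paper's, but the key identity you rely on is false in the graded object you choose. In $\greg_n$, the associated graded for the filtration by the number of black vertices (trivial coproduct), the generator $c_n$ is \emph{not} a derivation of the Lie bracket, and the cyclic relation fails there. Killing the correction terms $\sum_{\max(i,j)=n}c_i\circ_1 c_j$ only leaves the Greg relation, which says that the Leibniz failure is $(2\;3)$-\emph{symmetric}, not zero: as computed in the paper's example in Section \ref{seq:RGTO}, $x\circ_1 g-(g\circ_1 x).(2\;3)-g\circ_2 x=g_3\neq 0$. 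Concretely, with $l=x-y$ one checks on trees that $g\circ_1 l$ and $g\circ_2 l$ expand into distinct basis trees of $\greg$ with one black root (e.g.\ the tree whose black root carries the subtree $1\to 2$ and the leaf $3$, versus the leaf $1$ and the subtree $2\to 3$), so no cancellation is possible and $c_n\circ_1 l\neq c_n\circ_2 l$ in $\greg_n$. Both your cyclic-invariance step and your surjectivity step (rewriting $(c_n\circ_2 a)\circ_1 b$ by induction on the arity of $b$) invoke this non-identity, so the proposal has a genuine gap.

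The paper's mechanism is finer and is worth contrasting. One decomposes $x=\mu+l$ and uses the weight filtration on $\Lie\circ\Z_n$, transported to $\bigvee_\Lie^{n+1}\PL$, in which the \emph{symmetric part $\mu$ itself carries positive weight}: it is absorbed into the generators $\Y_k$ of $\Z_n$. With respect to this filtration, $c_n\circ_1 l$ and $c_n\circ_2 l$ have weight $2$ (the inner $l$ is part of a single generator of $\Y_n$, so it costs nothing extra), whereas the outer-bracket terms $l\circ_1 c_n$, the $\mu$-cross-terms, and the $c_i\circ_1 c_j$-terms all have weight $3$. Hence in $\gr_F\X_n$ the weight-$2$ part of Relation (\ref{eq:vpl}) gives $2(c_n\circ_2 l)+(c_n\circ_1 l).(2\;3)-(c_n\circ_1 l)=0$, and only after averaging this relation with its $(1\;3)$-translate (characteristic $0$ is used here) does one obtain the cyclic relation $c_n\circ_1 l=c_n\circ_2 l$; note this is a single symmetrized linear relation, not an exact derivation identity even in $\gr_F$. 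Surjectivity is then immediate — $\gr_F\X_n$ is by definition the right $\Lie$-module generated by $c_n$, and $\gr_F\X_n\simeq\X_n$ as species — with no rewriting induction needed. So the repair for your argument is not to work in $\greg_n$ but in the $\Y$-weight graded, where $\mu$ is also weighted; as stated, your "exact derivation property in $\greg_n$" would fail on the very first nontrivial tree computation.
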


\begin{proof}
  Let us compute Relation (\ref{eq:vpl}) with $x=\mu+l$. We get:
  \begin{multline*}
    (l \circ_1 c_n - (c_n \circ_1 l ).(2\;3) - c_n \circ_2 l ) - 
  (l \circ_1 c_n - (c_n \circ_1 l ).(2\;3) - c_n \circ_2 l ).(2\;3)\\
  (\mu \circ_1 c_n - (c_n \circ_1 \mu ).(2\;3) - c_n \circ_2 \mu ) - 
  (\mu \circ_1 c_n - (c_n \circ_1 \mu ).(2\;3) - c_n \circ_2 \mu ).(2\;3)\\
   +  \sum_{i, j\mid \max(i, j)=n}(c_i\circ_1 c_j - (c_i\circ_1 c_j).(2\;3))
  \end{multline*}
  Let us rewrite it a bit:
  \begin{multline*}
    2\times (c_n \circ_2 l) + (c_n \circ_1 l ).(2\;3) - (c_n \circ_1 l )=\\
    l \circ_1 c_n - (l \circ_1 c_n ).(2\;3)+\\    
  (\mu \circ_1 c_n - (c_n \circ_1 \mu ).(2\;3)) - 
  (\mu \circ_1 c_n - (c_n \circ_1 \mu ).(2\;3)).(2\;3)\\
   +  \sum_{i, j\mid \max(i, j)=n}(c_i\circ_1 c_j - (c_i\circ_1 c_j).(2\;3))
  \end{multline*}
  Let us remark that element of the left-hand side are in $F^2\bigvee_\Lie^{n+1}\PL$ and elements of the right-hand side are in $F^3\bigvee_\Lie^{n+1}\PL$. Indeed, at the left-hand side, we have composition of the identity (arity $1$) with elements of $\F(\Y_n)$ having exactly one occurrence of an element of $\{\mu, c_1, \dots, c_n\}$, hence degree $2$. At the right-hand side, we have either composition of  $l$ (arity $2$) with elements of $\F(\Y_n)$ having exactly one occurrence of an element of $\{\mu, c_1, \dots, c_n\}$, hence degree $3$; or composition the identity (arity $1$) with elements of $\F(\Y_n)$ having exactly two occurrences of an element of $\{\mu, c_1, \dots, c_n\}$, hence degree $3$.\\
  Let us consider $\gr_F\X_n$ the graded species associated to the restriction of the filtration $F$ of $\X_n$. As species we have that $\gr_F\X_n$ is isomorphic to $\X_n$. Moreover, in $\gr_F\X_n$, the above relation gives:
  \begin{equation*}
    2\times (c_n \circ_2 l) + (c_n \circ_1 l ).(2\;3) - (c_n \circ_1 l )=0
  \end{equation*}
  Let us denote $r$ this relation and compute $\frac{1}{3}(r+r.(1\;3))$:
  \begin{equation*}
    \frac{1}{3}(2\times (c_n \circ_2 l) + (c_n \circ_1 l ).(2\;3) - (c_n \circ_1 l )+
    2\times (c_n \circ_2 l).(1\;3) + (c_n \circ_1 l ).(1\;2\;3) - (c_n \circ_1 l ).(1\;3))=0
  \end{equation*}
  We get:
  \begin{equation}\tag{cyc}
    (c_n \circ_2 l) = (c_n \circ_1 l )
  \end{equation}
  This relation allows us to define a morphism of species from $\CycLie$ to $\gr_F\X_n$ by sending $\tilde{\id}\mapsto c_n$ and $\tilde{l}\mapsto (c_n \circ_2 l)$, with $\tilde{\id}$ and $\tilde{l}$ the identity and the Lie bracket of $\CycLie$. Indeed, we have an action of $\tau=(1\;2\;3)$ on $c_n \circ_2 l$ and $(c_n \circ_2 l).\tau=(c_n \circ_1 l)$, which gives $(c_n \circ_2 l)$ by the relation above, hence $\tau(\tilde{l})=\tilde{l}$. This morphism is surjective since $\gr_F\X_n$ is a right $\Lie$-module generated by $c_n$. Hence, we have a surjective morphism of species from $\CycLie$ to $\X_n$.
\end{proof}

\begin{Lemma}
  We have a surjective morphism of species from $\X_n\circ\Z_{n-1}$ to $Y_n$.
\end{Lemma}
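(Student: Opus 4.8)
The plan is to realize the desired map as a restriction of operadic composition, and to deduce surjectivity by unfolding each generator of $\Y_n$ through the surjection furnished by the first lemma of this section (with the index shifted down by one).

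First I would make the map explicit. Since $\X_n$ is a subspecies of $\bigvee_\Lie^{n+1}\PL$, and since each $\Y_k$ with $k\le n-1$ lies inside $\bigvee_\Lie^k\PL\subseteq\bigvee_\Lie^{n}\PL\subseteq\bigvee_\Lie^{n+1}\PL$, the species $\Z_{n-1}=\F(\Y_0)\circ\cdots\circ\F(\Y_{n-1})$ maps into $\bigvee_\Lie^{n+1}\PL$ by operadic composition. Composing these gives a morphism of species $\Phi\colon\X_n\circ\Z_{n-1}\to\bigvee_\Lie^{n+1}\PL$ sending a formal composite $\gamma(\xi;w_1,\dots,w_r)$ to the element it represents inside the operad. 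I would check that the image of $\Phi$ lies in $\Y_n$: a generator $\xi=(c_n\circ_2\alpha)\circ_1\beta$ of $\X_n$ has $\alpha,\beta\in\Lie$, and grafting $\Z_{n-1}$-elements onto its leaves yields, by the operad axioms, an element of the shape $(c_n\circ_2 a)\circ_1 b$ with $a,b\in\bigvee_\Lie^n\PL$, hence an element of $\Y_n$.

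The heart of the argument is surjectivity. I would take an arbitrary generator $(c_n\circ_2 a)\circ_1 b$ of $\Y_n$, with $a,b\in\bigvee_\Lie^n\PL$. The first lemma of this section, applied with $n$ replaced by $n-1$, gives a surjection $\Lie\circ\Z_{n-1}\twoheadrightarrow\bigvee_\Lie^n\PL$; crucially this surjection is unconditional, so it is available even though the \emph{isomorphism} $\Lie\circ\Z_{n-1}\simeq\bigvee_\Lie^n\PL$ is not yet known at this point of the induction. Choosing preimages, I write $a=\sum_s\gamma(\alpha_s;\vec u_s)$ and $b=\sum_t\gamma(\beta_t;\vec v_t)$ with $\alpha_s,\beta_t\in\Lie$ and all entries of the tuples $\vec u_s,\vec v_t$ lying in $\Z_{n-1}$. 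By bilinearity of the partial compositions together with the sequential composition axiom (associativity of plethysm), the tails $\vec u_s,\vec v_t$ can be pushed out to the leaves, so that
\[
(c_n\circ_2 a)\circ_1 b=\sum_{s,t}\gamma\big((c_n\circ_2\alpha_s)\circ_1\beta_t;\ \vec v_t,\vec u_s\big).
\]
Each base element $(c_n\circ_2\alpha_s)\circ_1\beta_t$ lies in $\X_n$ by definition, and the remaining inputs lie in $\Z_{n-1}$, so the right-hand side is $\Phi$ evaluated on an element of $\X_n\circ\Z_{n-1}$. Hence every generator of $\Y_n$ is in the image of $\Phi$, and $\Phi\colon\X_n\circ\Z_{n-1}\twoheadrightarrow\Y_n$ is surjective.

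The only delicate point is the bookkeeping in the displayed identity: one must verify, using only the parallel and sequential composition axioms, that substituting $a=\gamma(\alpha;\vec u)$ into the second slot of $c_n$ and $b=\gamma(\beta;\vec v)$ into the first slot leaves the Lie heads $\alpha,\beta$ adjacent to $c_n$ (assembling the $\X_n$-element) while the $\Z_{n-1}$-tails migrate to the leaves, in the order $\vec v_t$ (the leaves of $\beta_t$, grafted at slot $1$) followed by $\vec u_s$ (the leaves of $\alpha_s$, grafted at slot $2$). I expect no conceptual obstacle here, since this is precisely the compatibility of $\circ_i$ with plethysm; the care lies only in the ordering of the leaves. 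Note that no relation of $\bigvee_\Lie^{n+1}\PL$ beyond the bare operad structure is invoked, which is what makes this step purely formal.
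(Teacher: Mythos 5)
Your proposal is correct and follows essentially the same route as the paper: both realize the map by operadic composition, apply the earlier surjection $\Lie\circ\Z_{n-1}\twoheadrightarrow\bigvee_\Lie^n\PL$ to split each input of $c_n$ into a Lie head with $\Z_{n-1}$-tails, and regroup by associativity so that $\gamma(c_n, l_1,\dots,l_k)\in\X_n$ absorbs the heads while the tails land in $\Z_{n-1}$. Your version merely makes explicit the bilinearity and leaf-ordering bookkeeping that the paper leaves implicit.
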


\begin{proof}
  Let $\gamma(c_n, y_1, \dots, y_k)$ a monomial element of $\Y_n$, since $y_i\in\bigvee_\Lie^n\PL$ and 
  $\Lie\circ\Z_{n-1}$ surjects on $\bigvee_\Lie^n\PL$, we have $l_i$ such that:
  $$y_i=\gamma(l_i, \alpha_{(i, 1)}, \dots, \alpha_{(i, r_i)})$$
  with $l_i\in\Lie$ and $\alpha_{(i, j)}\in\Z_{n-1}$. Let $\beta=\gamma(c_n, l_1, \dots, l_k)$, we have $\beta\in\X_n$, hence $\gamma(c_n, y_1, \dots, y_k)$ is in the image of $\X_n\circ\Z_{n-1}$.
\end{proof}

Let us summarize the morphisms of species we have:
\begin{multline*}\label{eq:surj}
\bigvee_\Lie^n\PL\circ\F(\CycLie\circ\Z_{n-1})\twoheadrightarrow \bigvee_\Lie^n\PL\circ\F(\X_n\circ\Z_{n-1})
\twoheadrightarrow\\
\bigvee_\Lie^n\PL\circ\F(\Y_n)\twoheadrightarrow \bigvee_\Lie^{n+1}\PL
\end{multline*}
One last ingredient is needed: the equality of dimensions of the components to show that those morphisms are in fact isomorphisms.

\begin{Proposition}
  Let $S$ a species, $f_S(t)$ its exponential generating series. Then 
  $$f_{\F(\bar{\F}^{(n)}(S))}(t)=\frac{\rev_t(t-(n+1)f_S(t))-t}{n+1}+t$$
  where $\rev_t$ is the inverse of the composition in the argument $t$ and $f_{\F(\bar{\F}^{(n)}(S))}$ the exponential generating series of $\F(\bar{\F}^n(S))$.
\end{Proposition}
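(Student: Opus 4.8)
The plan is to reduce the claimed closed form to a single functional equation for $\phi_n:=f_{\F(\bar\F^{(n)}(S))}$ and then prove that equation by induction on $n$. The whole argument rests on two generating-series facts. First, the tree decomposition $\F(M)=I\oplus\big(M\circ\F(M)\big)$ of the free operad (a tree is either a single leaf or a root corolla carrying a forest of subtrees), together with $f_{A\circ B}=f_A\circ f_B$, yields $f_{\F(M)}=t+f_M\circ f_{\F(M)}$, equivalently $f_{\F(M)}=\rev_t(t-f_M)$, i.e.\ $f_{\F(M)}$ is the compositional inverse of $t-f_M$. Second, since $\bar\F(M)$ is obtained from $\F(M)$ by discarding the arity-one unit, $f_{\bar\F(M)}=f_{\F(M)}-t$; note this makes $\bar\F^{(n)}(S)$ a reduced species (no arity-one part), so the free-operad equation above applies legitimately at every stage.

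Next I would establish the recursion linking consecutive terms. Applying the two facts with $M=\bar\F^{(n)}(S)$ gives $f_{\bar\F^{(n+1)}(S)}=f_{\F(\bar\F^{(n)}(S))}-t=\phi_n-t$, whence $\phi_{n+1}=\rev_t\big(t-(\phi_n-t)\big)=\rev_t(2t-\phi_n)$. Equivalently $\phi_{n+1}^{\langle-1\rangle}=2t-\phi_n$, and composing with $\phi_{n+1}$ on the right yields the clean relation
\begin{equation}
  \phi_n\circ\phi_{n+1}=2\phi_{n+1}-t. \tag{$\heartsuit$}
\end{equation}

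The heart of the proof is the claim, proved by induction on $n$, that
\begin{equation}
  \phi_n=t+f_S\big((n+1)\phi_n-nt\big). \tag{$\dagger$}
\end{equation}
For $n=0$ this is exactly $\phi_0=t+f_S(\phi_0)$, the free-operad equation for $\F(S)$. For the inductive step I would write $(\dagger)$ as the identity $\phi_n(v)=v+f_S\big((n+1)\phi_n(v)-nv\big)$ and substitute $v=\phi_{n+1}(t)$; using $(\heartsuit)$ to replace $\phi_n(\phi_{n+1})$ by $2\phi_{n+1}-t$, the argument of $f_S$ becomes $(n+1)(2\phi_{n+1}-t)-n\phi_{n+1}=(n+2)\phi_{n+1}-(n+1)t$, and the unit terms on the outside collapse $2\phi_{n+1}-t=\phi_{n+1}+f_S\big((n+2)\phi_{n+1}-(n+1)t\big)$ to precisely $(\dagger)$ at level $n+1$.

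Finally I would deduce the stated formula from $(\dagger)$. Setting $\psi:=(n+1)\phi_n-nt$ and substituting $(\dagger)$ gives $\psi=t+(n+1)f_S(\psi)$, i.e.\ $\psi-(n+1)f_S(\psi)=t$, so $\psi=\rev_t\big(t-(n+1)f_S\big)$; solving $\phi_n=(\psi+nt)/(n+1)$ rewrites as $\phi_n=\tfrac{1}{n+1}\big(\rev_t(t-(n+1)f_S)-t\big)+t$, the advertised expression. The only genuinely delicate point is bookkeeping the operadic unit correctly—establishing $f_{\bar\F(M)}=f_{\F(M)}-t$ and checking that $\bar\F^{(n)}(S)$ really is unit-free so that the free-operad identity may be iterated; everything after the recursion $(\heartsuit)$ is a routine substitution.
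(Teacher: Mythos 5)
Your proof is correct, but it takes a genuinely different route from the paper's. The paper works with the bivariate series $f_{\F(S)}(t,z)=\rev_t(t-zf_S(t))$, where the auxiliary variable $z$ records the number of generators, and obtains $f_{\bar{\F}^{(n)}(S)}(t)=\frac{\rev_t(t-nf_S(t))-t}{n}$ by substituting $z=n$ and dividing by $n$; the combinatorial content of that step (only sketched in the paper) is that a tree on $k$ generators carries exactly $n^{k-1}$ structures of an element of $\bar{\F}^{(n)}(S)$, one choice among $n$ nesting levels for each of its $k-1$ internal edges, and the proposition then follows by adding back the unit via $f_{\F(\bar{\F}^{(n)}(S))}=f_{\bar{\F}^{(n+1)}(S)}+t$. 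You never leave univariate series: from $f_{\F(M)}=\rev_t(t-f_M)$ and $f_{\bar{\F}(M)}=f_{\F(M)}-t$ you derive the recursion $\phi_n\circ\phi_{n+1}=2\phi_{n+1}-t$ and propagate the functional equation $\phi_n=t+f_S\bigl((n+1)\phi_n-nt\bigr)$ by induction on $n$, which inverts directly to the closed form. I checked the substitution $v=\phi_{n+1}$ (legitimate since $\phi_{n+1}$ has zero constant term), the resulting coefficient bookkeeping $(n+1)(2\phi_{n+1}-t)-n\phi_{n+1}=(n+2)\phi_{n+1}-(n+1)t$, and the final inversion $\psi=t+(n+1)f_S(\psi)$; all are correct, and your induction even recovers the paper's intermediate formula as a byproduct, since $f_{\bar{\F}^{(n+1)}(S)}=\phi_n-t$. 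What each approach buys: the paper's argument is shorter and explains conceptually where the substitution $(n+1)f_S$ and the factor $\frac{1}{n+1}$ come from, but rests on the under-justified ``dividing by $z$'' step; yours is more elementary and fully rigorous at the level of formal power series, at the cost of hiding the enumerative meaning. Your attention to the unit is also well placed: despite the paper's phrase $\F(\X)=\bar{\F}(\X)\oplus\X$, its generating-series usage confirms that $\bar{\F}$ discards the arity-one unit, and your standing hypothesis that $S$ has no arity-zero or arity-one component (satisfied by $\CycLie$) is exactly what guarantees that all the compositional inverses you invoke exist.
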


\begin{proof}
  For a species $S$ with exponential generating series $f_S(t)$, the exponential generating series $f_{\F(S)}(t, z)$ of $\F(S)$ is given by $f_{\F(S)}(t, z)$ which is the inverse of $t-zf_S(t)$ for the composition in the argument $t$, hence we have $f_{\F(S)}(t, z)=\rev_t(t-zf_S(t))$. Hence, the exponential generating series of $\bar{\F}(S)$ is $f_{\bar{\F}(S)}(t, z)=f_{\F(S)}(t, z)-t=\rev_t(t-zf_S(t))-t$. The exponential generating series $f_{\F(S)}(t, z)$ and $f_{\bar{\F}(S)}(t, z)$ have two arguments, the first one $t$ count the arity of the elements and the second one $z$ count the number of generators of the elements in the free operad. Since $z$ count the number of generators of the elements in the free operad, dividing by $z$ allows us to count the number of compositions of generators. Hence, the exponential generating series of $\bar{\F}^{(n)}(S)(t)$ is $\frac{\rev_t(t-nf_S(t))-t}{n}$. Finally, we get:
  $$f_{\F(\bar{\F}^{(n)}(S))}(t)=\frac{\rev_t(t-(n+1)f_S(t))-t}{n+1}+t$$
\end{proof}

\begin{Lemma}
  The exponential generating series of $\bigvee_\Lie^n\PL\circ\F(\bar{\F}^{(n)}(\CycLie))$ is equal to the exponential generating series of $\bigvee_\Lie^{n+1}\PL$.
\end{Lemma}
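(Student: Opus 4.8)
The plan is to reduce the identity to functional equations for the exponential generating series and invoke uniqueness of their power-series solutions. Write $P_m:=f_{\bigvee_\Lie^m\PL}$. By the identification $\bigvee_\Lie^m\PL\simeq\greg^{(V,\Delta_{\max})}$ with $\dim V=m-1$, together with the closed relation $t=((n+1)f_{\greg^C}+n)\exp(-f_{\greg^C})-n$ derived in the proof that $\G^C\simeq\greg^C$ (applied with parameter $m-1$ in place of $n$), one obtains for every $m\geq1$
$$t=(mP_m+m-1)\exp(-P_m)-(m-1).$$
In particular the compositional inverse of $P_n$ is $P_n^{-1}(v)=(nv+n-1)\exp(-v)-(n-1)$, and for $m=n+1$ one has $t=((n+1)P_{n+1}+n)\exp(-P_{n+1})-n$.

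First I would note that plethysm of species corresponds to composition of generating series, so the series to be computed is $P_n\circ Q$ where $Q:=f_{\F(\bar{\F}^{(n)}(\CycLie))}$. Next I would compute $f_{\CycLie}$: since $\dim\CycLie(m)=\dim\Lie(m-1)=(m-2)!$, summation gives
$$f_{\CycLie}(t)=\sum_{m\geq2}\frac{t^m}{m(m-1)}=t+(1-t)\log(1-t),$$
which vanishes to order $2$. Feeding this into the preceding Proposition and writing $Q=\frac{\rev_t(t-(n+1)f_{\CycLie})+nt}{n+1}$, so that $\psi:=(n+1)Q-nt$ is the compositional inverse of $s\mapsto s-(n+1)f_{\CycLie}(s)$, I would extract the functional equation
$$Q=t+f_{\CycLie}\big((n+1)Q-nt\big).$$
Because $f_{\CycLie}$ is $O(s^2)$ and $Q$ has linear term $t$, this equation pins down every Taylor coefficient of $Q$ by induction on degree; hence it has a unique such power-series solution.

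The heart of the proof is then a single verification. Set $u:=P_{n+1}(t)$ and $\tilde Q:=P_n^{-1}(u)=(nu+n-1)\exp(-u)-(n-1)$; the lemma is equivalent to $\tilde Q=Q$, i.e.\ to $P_n(Q)=u=P_{n+1}$. I would check that $\tilde Q$ solves the functional equation above. Using the $m=n+1$ relation for $t$, the decisive cancellation is
$$(n+1)\tilde Q-nt=1-\exp(-u),$$
so that with $w:=1-\exp(-u)$ one has $1-w=\exp(-u)$ and $\log(1-w)=-u$, giving $f_{\CycLie}(w)=1-(1+u)\exp(-u)$; a second use of the $m=n+1$ relation yields $t+f_{\CycLie}(w)=\tilde Q$. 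By the uniqueness of the previous paragraph, $\tilde Q=Q$, and therefore $P_n\circ Q=P_{n+1}$, as claimed.

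The step I expect to be the main obstacle is the bookkeeping of the last paragraph: translating the $\rev_t$-formula for $Q$ into its functional equation, and then performing the cancellation $(n+1)\tilde Q-nt=1-\exp(-u)$ and the substitution into $f_{\CycLie}$ without sign or index slips, since the two closed relations (for $m=n$ and $m=n+1$) must be combined precisely. Everything else — composition of series under plethysm, the explicit form of $f_{\CycLie}$, and uniqueness of the fixed-point equation — is routine once these identities are set up correctly.
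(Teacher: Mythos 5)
Your proof is correct and essentially the paper's own argument: the paper verifies the identity $h((n+1)g-nf)=f$ for the compositional inverses $f$, $g$, $h$ of $P_{n+1}$, $P_n$ and $s\mapsto s-(n+1)f_{\CycLie}(s)$ respectively, which—after composing with $\rev_t(f)$—is exactly your statement that $\tilde Q=P_n^{-1}\circ P_{n+1}$ solves the fixed-point equation $Q=t+f_{\CycLie}\bigl((n+1)Q-nt\bigr)$, resting on the same decisive cancellation $(n+1)g-nf=1-\exp(-t)$. Your uniqueness-of-power-series-solution framing is a harmless repackaging of the paper's direct computation, so the two proofs coincide in substance.
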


\begin{proof}
  Let us compute the exponential generating series of $\F(\bar{\F}^{(n)}(\CycLie))$. The exponential generating series of $\CycLie$ is well known to be $(1-t)\ln(1-t)+t$, indeed its dimensions are $(n-2)!$. Hence, the exponential generating series of $\F(\bar{\F}^{(n)}(\CycLie))$ is 
  $$f_{\F(\bar{\F}^{(n)}(\CycLie))}(t)=\frac{\rev_t(t-(n+1)(1-t)\ln(1-t)-(n+1)t)-t}{n+1}+t$$
  We have already computed the exponential generating series of $\bigvee_\Lie^{n+1}\PL$ in Proposition \ref{Proposition:egsgc} which is
  $$f_{\bigvee_\Lie^{n+1}\PL}(t)=\rev_t((nt+t+n)\exp(-t)-n)$$
  And the exponential generating series of $\bigvee_\Lie^{n}\PL$ is 
  $$f_{\bigvee_\Lie^n\PL}(t)=\rev_t((nt+n-1)\exp(-t)-n+1)$$
  Let us show that $f_{\bigvee_\Lie^{n+1}\PL}(t)=(f_{\bigvee_\Lie^n\PL}\circ f_{\F(\bar{\F}^{(n)}(\CycLie))})(t)$. Let 
  $$ f(t)=(nt+t+n)\exp(-t)-n \qquad g(t)=(nt+n-1)\exp(-t)-n+1 $$
  $$ h(t)=t-(n+1)(1-t)\ln(1-t)-(n+1)t$$
  We want to show that: 
  $$\rev_t(f)(t)=(\rev_t(g)\circ(\frac{\rev_t(h)-t}{n+1}+t))(t)$$
  It suffices to show that $h((n+1)g-nf)=f$. Let us compute:
  \begin{align*}
      (n+1)g(t)-nf(t)\hspace{-10pt}&&=&(n+1)((nt+n-1)\exp(-t)-n+1)-n((nt+t+n)\exp(-t)-n) \\
      &&=&((n+1)nt\exp(-t)+(n+1)(n-1)\exp(-t)-(n+1)(n-1))- \\
      && &(n(n+1)t\exp(-t)-n^2\exp(-t)+n^2)   \\
      &&=&-\exp(-t)+1 
  \end{align*}
  Hence:
  $$h((n+1)g-nf)=-\exp(-t)+1 -(n+1)\exp(-t)\ln(\exp(-t))-(n+1)(-\exp(-t)+1 )=f$$ 
  which concludes the proof.
\end{proof}

We can state and prove the generalization of the previous theorem:

\begin{Theorem}\label{Theorem:m5}
  We have:
  \begin{enumerate}
    \item The species $\X_n$ is isomorphic to $\CycLie$ as a species.
    \item The species $\Y_n$ is isomorphic to $\bar{\F}^{(n)}(\CycLie)$ as species;
    \item The suboperad $\P(\Y_n)$ of $\bigvee_\Lie^{n+1}\PL$ generated by $\Y_n$ is free;
    \item The left $\bigvee_\Lie^n\PL$-submodule of $\bigvee_\Lie^{n+1}\PL$ generated by $\P(\Y_n)$ is free and coincide
    with $\bigvee_\Lie^{n+1}\PL$.
    \item The species $\Z_n$ is isomorphic to $\F(\CycLie)\circ\dots\circ\F(\bar{\F}^{(n)}(\CycLie))$ as species;
    \item The left $\Lie$-submodule of $\bigvee_\Lie^{n+1}\PL$ generated by $\Z_n$ is free and coincide with the $\Lie$-module $\bigvee_\Lie^{n+1}\PL$.
  \end{enumerate}
\end{Theorem}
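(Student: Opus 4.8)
The plan is to argue by induction on $n$, the base case $n=0$ being exactly the theorem of \cite{FPRed} recalled above (with $\bigvee_\Lie^1\PL=\PL$ and $\bigvee_\Lie^0\PL=\Lie$, so that $\Y_0\simeq\CycLie$ and $\Z_0=\F(\CycLie)$). The engine of the whole argument is the chain of surjections of species assembled from the three lemmas above,
$$\bigvee_\Lie^n\PL\circ\F(\CycLie\circ\Z_{n-1})\twoheadrightarrow \bigvee_\Lie^n\PL\circ\F(\X_n\circ\Z_{n-1})\twoheadrightarrow \bigvee_\Lie^n\PL\circ\F(\Y_n)\twoheadrightarrow \bigvee_\Lie^{n+1}\PL.$$
If I can show that the leftmost and rightmost terms have the same exponential generating series, then each arrow is a morphism of species that is surjective and, in every arity, has source and target of equal finite dimension; an equivariant surjection between finite-dimensional spaces of equal dimension is an isomorphism, so every arrow is an isomorphism of species. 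All six assertions are then read off from the resulting isomorphisms.

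To compute the dimension of the left-hand end I would feed in the inductive hypothesis, which identifies $\Z_{n-1}\simeq\F(\CycLie)\circ\F(\bar{\F}(\CycLie))\circ\cdots\circ\F(\bar{\F}^{(n-1)}(\CycLie))$ as a species (this is (5) at level $n-1$). The crucial point is the natural species isomorphism $\bar{\F}(S)\simeq S\circ\F(S)$ --- a non-trivial $S$-tree is its root vertex together with the forest of maximal subtrees hanging from it --- which is precisely the combinatorial content of the identity $f_{\bar{\F}(S)}=f_{\F(S)}-t$ used in the proof of the generating-series proposition. Telescoping with this isomorphism, via the successive collapses $\bar{\F}^{(k)}(\CycLie)\circ\F(\bar{\F}^{(k)}(\CycLie))\simeq\bar{\F}^{(k+1)}(\CycLie)$, gives
$$\CycLie\circ\Z_{n-1}\simeq\CycLie\circ\F(\CycLie)\circ\cdots\circ\F(\bar{\F}^{(n-1)}(\CycLie))\simeq\bar{\F}^{(n)}(\CycLie).$$
Hence the left-hand end is isomorphic as a species to $\bigvee_\Lie^n\PL\circ\F(\bar{\F}^{(n)}(\CycLie))$, whose generating series equals that of $\bigvee_\Lie^{n+1}\PL$ by the generating-series lemma above. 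This forces all three arrows to be isomorphisms, and the analytically delicate reversion identity between the two series is already packaged in that lemma.

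From these isomorphisms I would extract the six statements in order. The first arrow being an isomorphism forces the surjection $\CycLie\twoheadrightarrow\X_n$ to be an isomorphism, since plethystic substitution of the compositionally invertible series $f_{\Z_{n-1}}$ (and of the free-operad reversion) is cancellative on generating series; this gives (1). The second arrow then yields $\Y_n\simeq\X_n\circ\Z_{n-1}\simeq\CycLie\circ\Z_{n-1}\simeq\bar{\F}^{(n)}(\CycLie)$, which is (2), and (5) follows by combining (2) with the recursion $\Z_n=\Z_{n-1}\circ\F(\Y_n)$ and the inductive hypothesis. The third arrow being an isomorphism says exactly that $\bigvee_\Lie^{n+1}\PL$ is free as a left $\bigvee_\Lie^n\PL$-module on $\F(\Y_n)$ and coincides with it, which is (4); restricting that isomorphism to the factor $\F(\Y_n)$ shows the canonical map $\F(\Y_n)\to\P(\Y_n)$ is injective, so $\P(\Y_n)$ is free, giving (3). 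Finally (6) comes from the separate surjection of $\Lie$-bimodules $\Lie\circ\Z_n\twoheadrightarrow\bigvee_\Lie^{n+1}\PL$ of the opening lemma together with $f_{\Lie\circ\Z_n}=f_\Lie\circ f_{\Z_n}=f_{\bigvee_\Lie^{n+1}\PL}$, which again upgrades the surjection to an isomorphism. I expect the main obstacle to be organizational rather than analytic: keeping the induction honest so that each extracted statement is available at the level where the next step consumes it, and carrying out the telescoping with genuine natural species isomorphisms (rather than mere numerical equalities) so that (2) and (5) hold as isomorphisms of species and not only of dimensions.
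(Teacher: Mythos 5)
Your proposal is correct and follows essentially the same route as the paper's proof: induction on $n$ with base case the theorem of \cite{FPRed}, the chain of surjections from the three lemmas, the identification $\CycLie\circ\Z_{n-1}\simeq\bar{\F}^{(n)}(\CycLie)$ via item (5) of the inductive hypothesis, and the upgrade of all surjections to isomorphisms by the equality of exponential generating series. Your explicit telescoping via $\bar{\F}(S)\simeq S\circ\F(S)$ and your direct dimension-count argument for item (6) merely spell out steps the paper leaves implicit (the paper instead deduces (6) from the left-module isomorphism), so the substance is the same.
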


\begin{proof}
  Let us prove this theorem by induction on $n$. The base case is the theorem from \cite{FPRed}.\\
  From the previous lemmas we have 
  \begin{multline*}
    \bigvee_\Lie^n\PL\circ\F(\CycLie\circ\Z_{n-1})\twoheadrightarrow \bigvee_\Lie^n\PL\circ\F(\X_n\circ\Z_{n-1})
    \twoheadrightarrow\\
    \bigvee_\Lie^n\PL\circ\F(\Y_n)\twoheadrightarrow\bigvee_\Lie^n\PL\circ\P(\Y_n)\twoheadrightarrow \bigvee_\Lie^{n+1}\PL
  \end{multline*}
  By item (5), we have $\Z_{n-1}\simeq\F(\CycLie)\circ\dots\circ\F(\bar{\F}^{(n-1)}(\CycLie))$, hence  
  $$ \CycLie\circ\Z_{n-1}\simeq\CycLie\circ\F(\CycLie)\circ\dots\circ\F(\bar{\F}^{(n-1)}(\CycLie)) \simeq \bar{\F}^{(n)}(\CycLie)$$
  Those surjective morphisms are isomorphisms by equality of dimensions. This shows that:
  \begin{enumerate}
    \item The species $\X_n$ is isomorphic to $\CycLie$;
    \item The species $\Y_n$ is isomorphic to $\bar{\F}^{(n)}(\CycLie)$;
    \item The species $\P(\Y_n)$ is isomorphic to $\F(\Y_n)$;
    \item And the left $\bigvee_\Lie^n\PL$-module $\bigvee_\Lie^n\PL\circ\P(\Y_n)$ is isomorphic to $\bigvee_\Lie^{n+1}\PL$ as left
    $\bigvee_\Lie^n\PL$-module.
  \end{enumerate}
  Moreover, since $\Z_n=\Z_{n-1}\circ\F(\Y_n)$ we have that $\Z_n$ is isomorphic to:
  $$\F(\CycLie)\circ\dots\circ\F(\bar{\F}^{(n)}(\CycLie))$$ 
  as a species. Since $\bigvee_\Lie^n\PL\circ\P(\Y_n)$ is isomorphic to $\bigvee_\Lie^{n+1}\PL$ as left $\bigvee_\Lie^n\PL$-module, they are isomorphic as left $\Lie$-module. Hence, $\Lie\circ\Z_n$ is isomorphic to $\bigvee_\Lie^{n+1}\PL$ as left $\Lie$-module, in particular the left $\Lie$-submodule generated by $\Z_n$ is free and coincide with $\bigvee_\Lie^{n+1}\PL$.
\end{proof}

\begin{Remark}
  This proof can be adapted to show that $\greg_n\simeq\greg_{n-1}\circ\F(\bar{\F}^{(n)}(\CycLie))$.
\end{Remark}

The operad $\bigvee_\Lie^{n+1}\PL$ is also free as a right $\bigvee_\Lie^n\PL$-module. It could be interesting to compute explicit generator in this case.

\section*{\hypertarget{Appendix}{Appendix}}

\afterpage{\clearpage}
\begin{center}
\begin{figure}[H]\caption{The 25 relations of the operad $(\greg')^!$}\label{fg:StRel1}
  \begin{center}
  \includegraphics{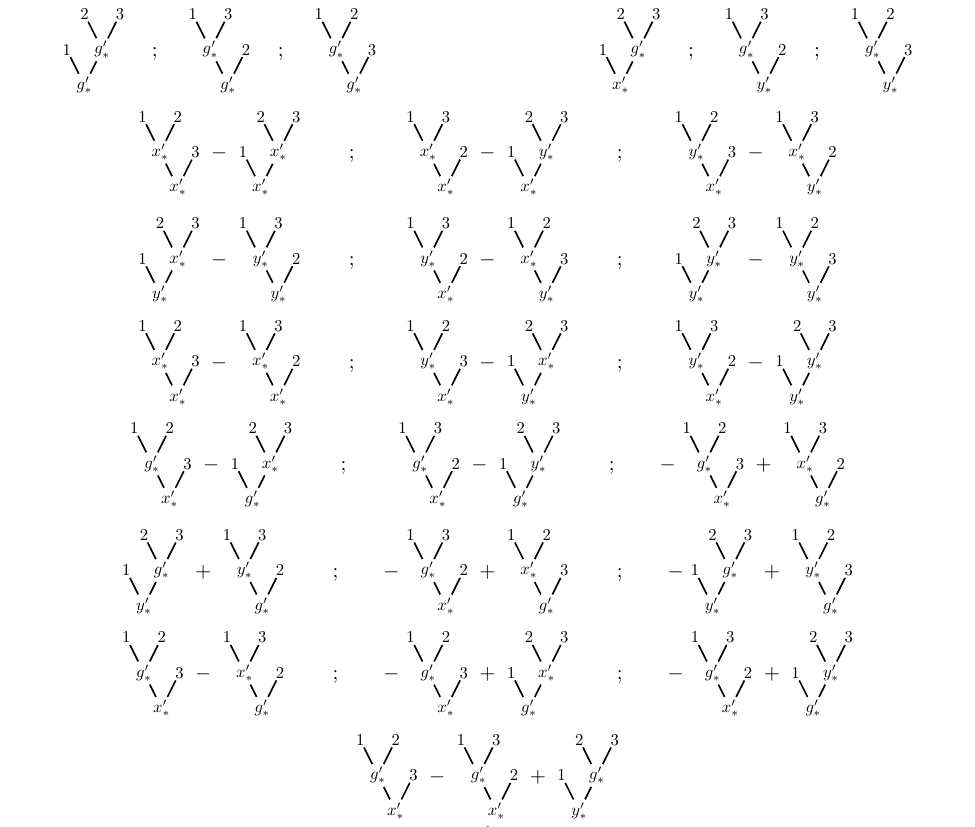}
  \end{center}
\end{figure}
\end{center}

\afterpage{\clearpage}
\begin{center}
\begin{figure}[H]\caption{The ``dlp'' rewriting system for $(\greg')^!$}\label{fg:StGB1}
  \begin{center}
  \includegraphics{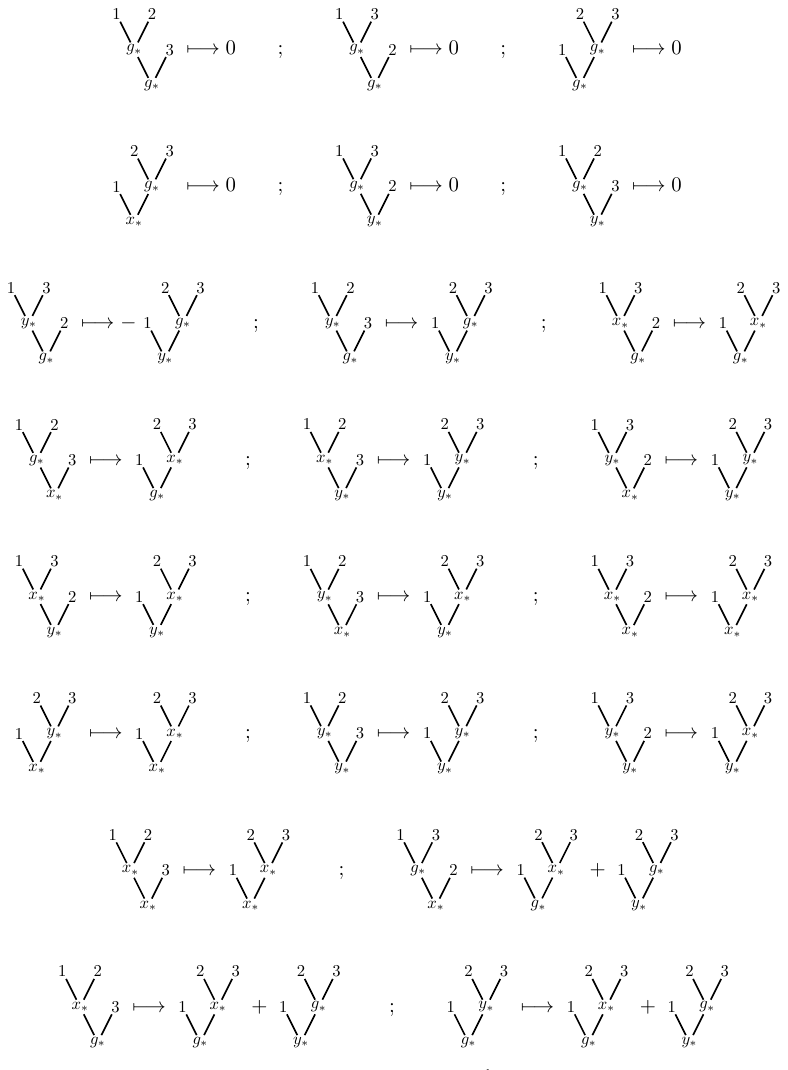}
  \end{center}
\end{figure}
\end{center}

\afterpage{\clearpage}
\begin{center}
\begin{figure}[H]\caption{The ``prdl'' rewriting system for $(\greg')^!$}\label{fg:StGB2}
  \begin{center}
  \includegraphics{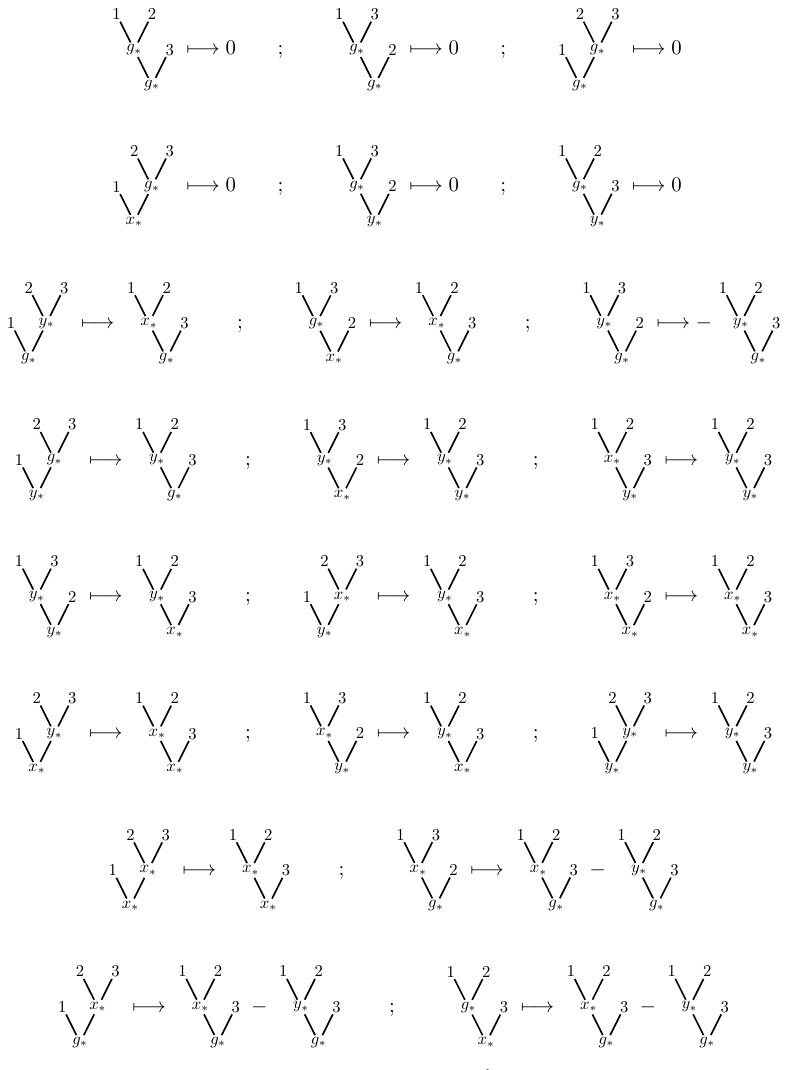}
  \end{center}
\end{figure}
\end{center}

\afterpage{\clearpage}
\begin{center}
  \begin{figure}[H]\caption{The ``rdlp'' rewriting system for $(\greg')^!$}\label{fg:StGB3}
    \begin{center}
    \includegraphics{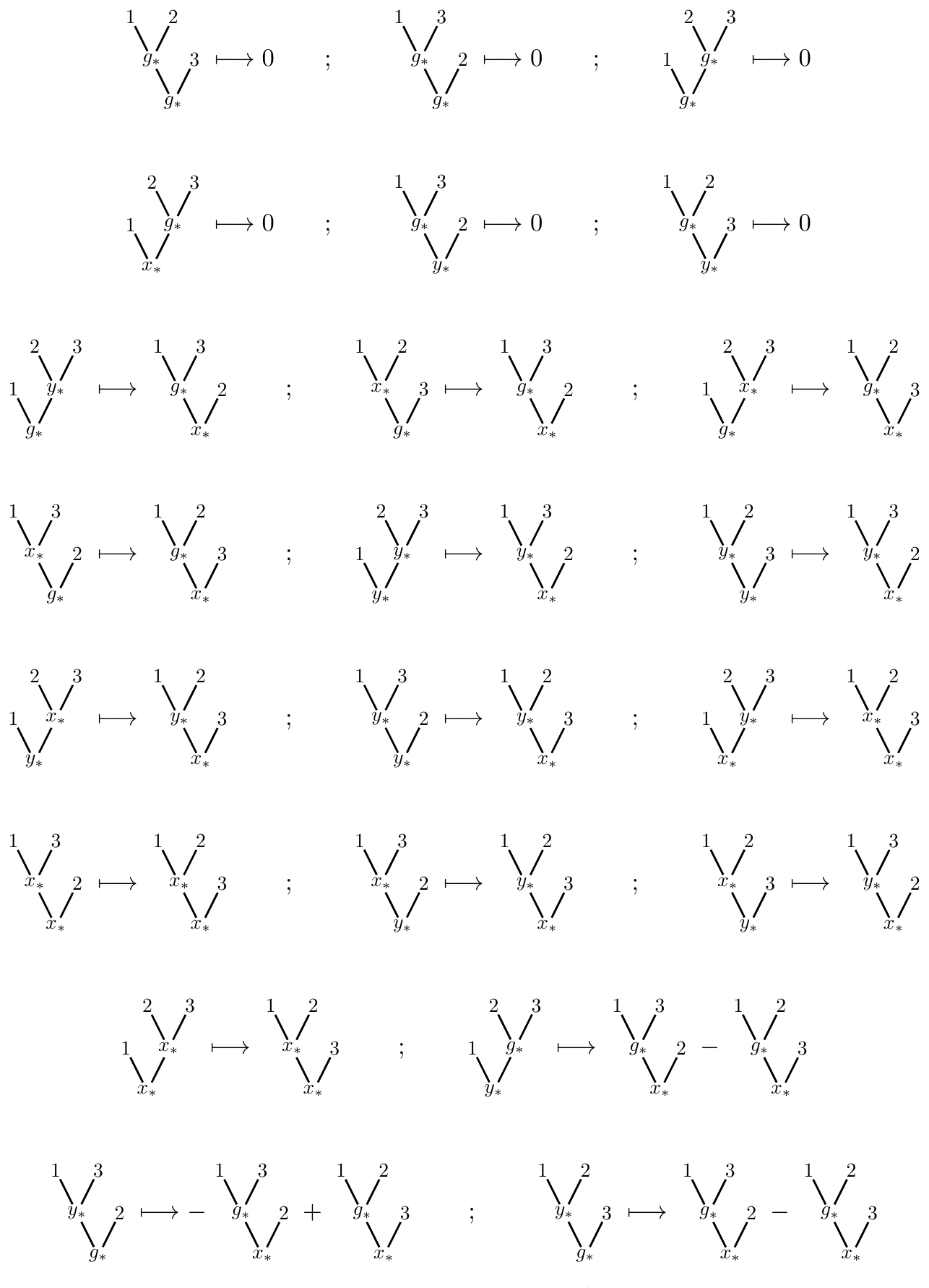}
    \end{center}
  \end{figure}
\end{center}

\afterpage{\clearpage}
\begin{center}
\begin{figure}[H]\caption{The computation of $(R\star^\Delta S)\star^\Delta T$}\label{fg:StComp1}
  \begin{center}
  \includegraphics{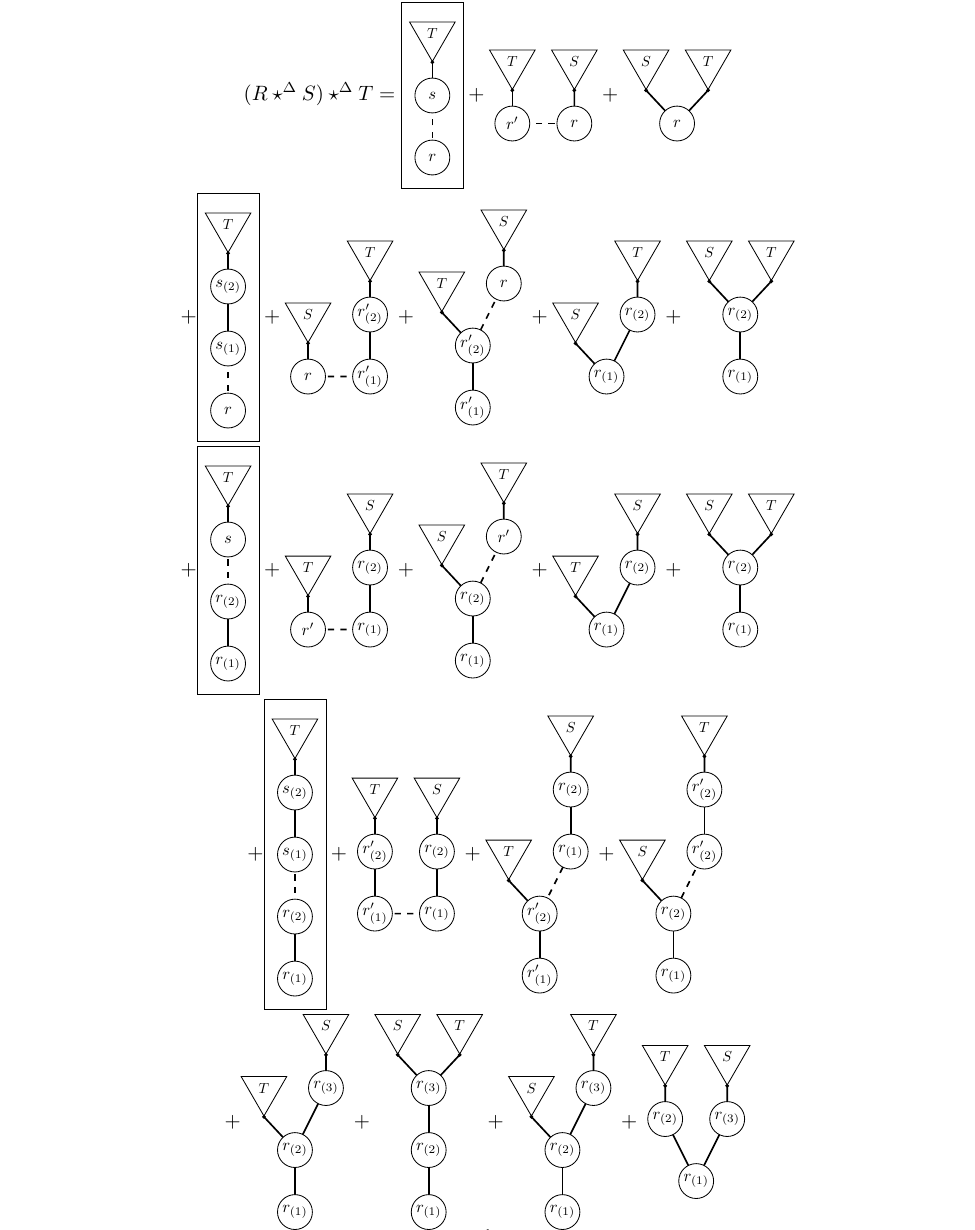}
  \end{center}
\end{figure}
\end{center}

\afterpage{\clearpage}
\begin{center}
  \begin{figure}[H]\caption{The ``dlp'' rewriting system for $(\G^C)^!$}\label{fg:StGBg1}
    \begin{center}
    \includegraphics{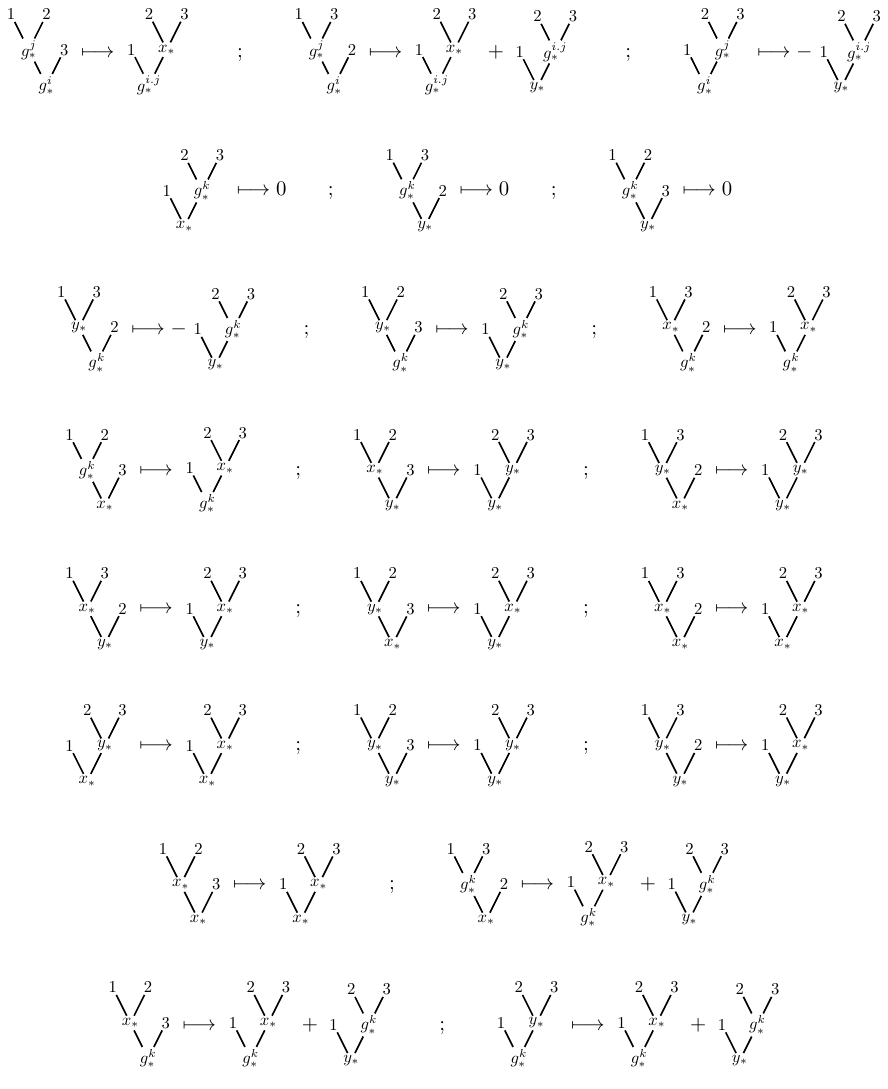}
    \end{center}
  \end{figure}
\end{center}

\afterpage{\clearpage}
\begin{center}
  \begin{figure}[H]\caption{The ``wprdl'' rewriting system for $(\G^C)^!$}\label{fg:StGBg2}
    \begin{center}
    \includegraphics{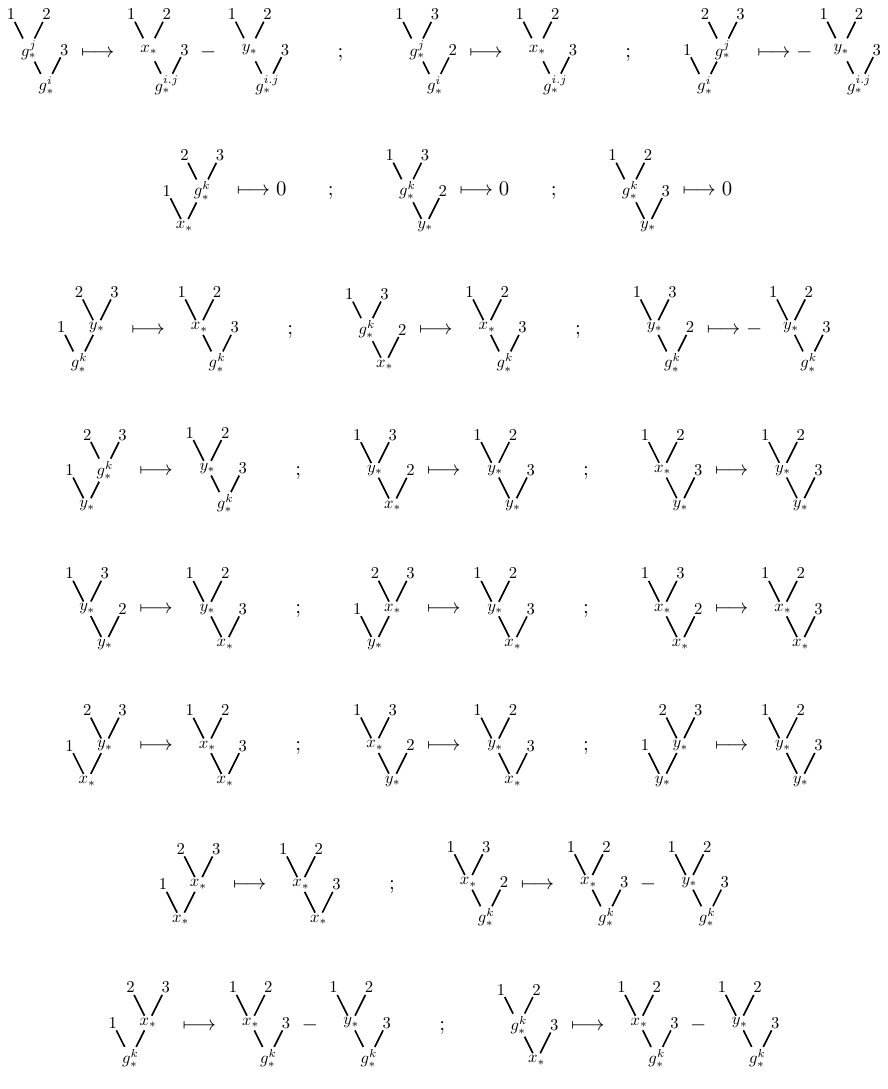}
    \end{center}
  \end{figure}
\end{center}

\afterpage{\clearpage}
\begin{center}
  \begin{figure}[H]\caption{The ``rdlp'' rewriting system for $(\G^C)^!$}\label{fg:StGBg3}
    \begin{center}
    \includegraphics{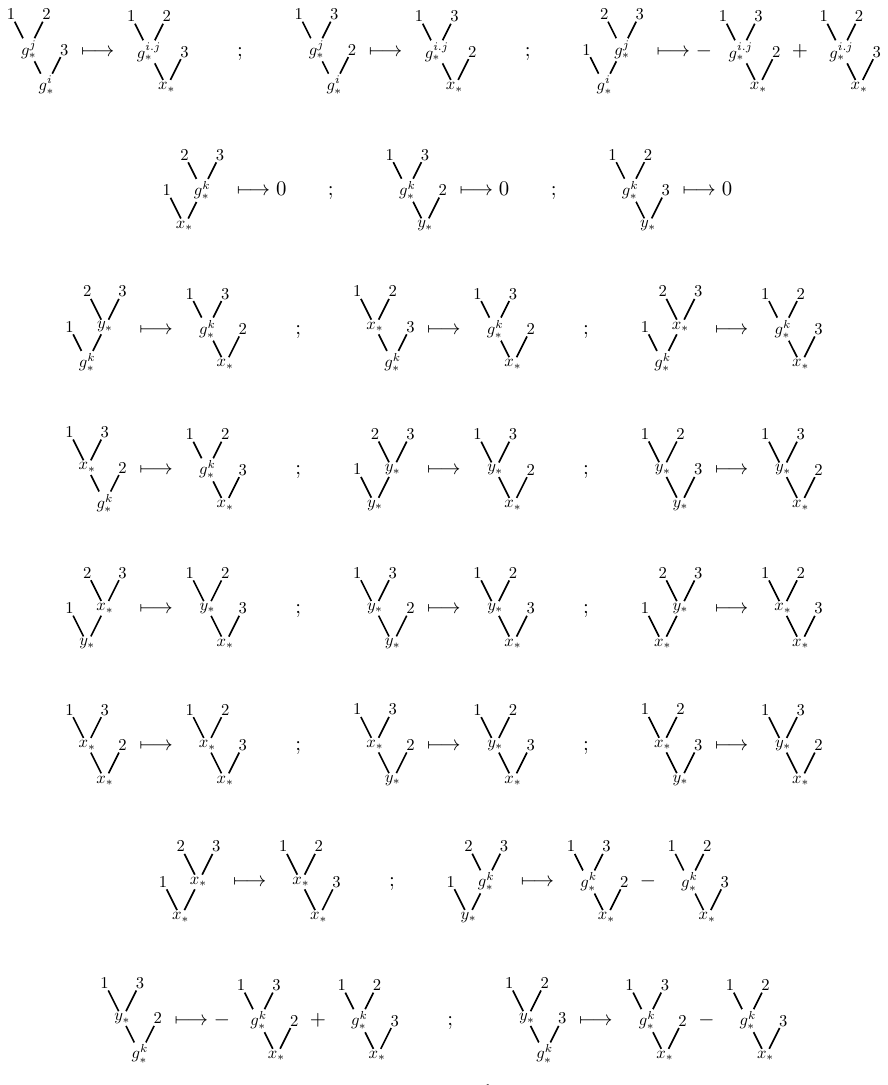}
    \end{center}
  \end{figure}
\end{center}


\newpage

\bibliographystyle{plain}
\bibliography{Bibly}

\textsc{Institut de Recherche Mathématique Avancée, UMR 7501, Université de Strasbourg et CNRS, 
7 rue René-Descartes, 67000 Strasbourg CEDEX, France}\\
\emph{Email address:} laubie@unistra.fr

\end{document}